\documentclass{article}
\usepackage{amsmath,amssymb,amsthm,graphicx,pifont,geometry,fleqn,txfonts,sidecap,subfig,caption,color}
\usepackage{graphicx}
\graphicspath{%
    {saved_images/}
}
\begin{document}

{
\theoremstyle{definition}
\newtheorem{examp}{Example}
\newtheorem*{defi}{Definition}
\newtheorem*{notation}{Notation}
}

\newcommand\set[1]{\{#1\}}
\def\minrec{MINREC}
\def\minfearset{MINFAS}
\def\conf{configuration}
\def\confs{configurations}
\def\reverse{reverse}
\newcommand\outdegree[2]{\deg_{#1}^{+}(#2)}
\newcommand\indegree[2]{\deg_{#1}^{-}(#2)}
\newcommand\degree[2]{{\deg}_{#1}(#2)}
\newcommand\directize[1]{\overset{\leftrightarrow}{#1}}
\newcommand\deleteoutarcs[2]{#1_{\backslash #2^{+}}}
\newcommand\confdegree[3]{\textbf{\texttt{sum}}\,_{#1,#2}(#3)}
\newcommand\stab[3]{{#3}^{\circ^{#2}}}
\newcommand\sinkaug[3]{\overline{{#1}^{*^{#2}}}^{#3}}
\newcommand\augnum[2]{\mathcal{I}_{\!#1}(#2)}
\newcommand\seaugnum[1]{\mathcal{I}'(#1)}
\newcommand\numarcs[3]{\deg_{#1}(#2,#3)}
\newcommand\level[2]{level_{#1}(#2)}
\newcommand\Tut[2]{\mathcal{T}_{#1}(#2)}
\newcommand\contract[2]{#1_{/#2}}
\newcommand\tail[1]{#1^{-}}
\newcommand\head[1]{#1^{+}}
\newcommand\deletearc[2]{#1_{\backslash #2}}
\newcommand\numloops[1]{L(#1)}
\newcommand\deleteloops[1]{\overline{#1}}
\newcommand\totalchipmin[1]{\kappa(\overline{#1})}
\newcommand\vertexcontract[2]{#1_{/#2}}
\newcommand\support[1]{N_F(#1)}
\newtheorem{lem}{Lemma}
\newtheorem{theo}{Theorem}
\newtheorem*{ques}{Question}
\newtheorem{prop}{Proposition}
\newtheorem{prob}{Problem}
\newtheorem{conj}{Conjecture}

\title{Chip-firing game and partial Tutte polynomial for\\Eulerian digraphs\thanks{This paper was partially sponsored by Vietnam Institute for Advanced Study in Mathematics (VIASM) and the Vietnamese National Foundation for Science and Technology Development (NAFOSTED)}}

\author{K\'evin Perrot and Trung Van Pham}

\date{\today}

\maketitle
\begin{abstract}
The Chip-firing game is a discrete dynamical system played on a graph, in which chips move along edges according to a simple local rule. Properties of the underlying graph are of course useful to the understanding of the game, but since a conjecture of Biggs that was proved by Merino L\'opez, we also know that the study of the Chip-firing game can give insights on the graph. In particular, a strong relation between the partial Tutte polynomial $T_G(1,y)$ and the set of recurrent configurations of a Chip-firing game (with a distinguished sink vertex) has been established for undirected graphs. A direct consequence is that the generating function of the set of recurrent configurations is independent of the choice of the sink for the game, as it characterizes the underlying graph itself. In this paper we prove that this property also holds for Eulerian directed graphs (digraphs), a class on the way from undirected graphs to general digraphs. It turns out from this property that the generating function of the set of recurrent configurations of an Eulerian digraph is a natural and convincing candidate for generalizing the partial Tutte polynomial $T_G(1,y)$ to this class. Our work also gives some promising directions of looking for a generalization of the Tutte polynomial to general digraphs.\\
\text{}\\
\textbf{Keywords.} Chip-firing game, complexity, critical configuration, Eulerian digraph, feedback arc set, recurrent configuration, reliability polynomial, Sandpile model, Tutte polynomial.
\end{abstract}

\section{Introduction}

There are insightful polynomials that are defined on undirected graphs, such as Tutte polynomial, chromatic polynomial, cover polynomial, reliability polynomial, \emph{etc}, which evaluations count certain combinatorial objects. The Tutte polynomial is the most well-known, it has many interesting properties and applications \cite{Tut53}. There is an evident interest in looking for analogues of the Tutte polynomial for directed graphs (digraphs), of for some other objects \cite{Ges89,Gor93, CG95}. These attempts share properties of the Tutte polynomial. Nevertheless, they are not natural extensions of the Tutte polynomial in the sense that one does not know a conversion from the properties of these polynomials to those of the Tutte polynomial, in particular how to get back to the Tutte polynomial on undirected graphs from these polynomials. For this reason the authors of \cite{CG95} asked for a natural generalization of Tutte polynomial for digraphs.

The evaluation of the Tutte polynomial $T_G(x,y)$ at $x=1$ is important since it has a strong connection to the reliability polynomial that is studied in the network theory. In this paper we present a polynomial that can be considered as a natural generalization of $T_G(1,y)$ for the class of Eulerian digraphs. An Eulerian digraph is a strongly connected digraph in which each vertex has equal in-degree and out-degree. An undirected graph can be regarded as an Eulerian digraph by replacing each edge $e$ by two {\reverse} arcs $e'$ and $e''$ that have the same endpoints as $e$. When considering undirected graphs seen as Eulerian digraphs in that way, we will see that we get back to the partial Tutte polynomial $T_G(1,y)$, which is a new and relevant feature. 

This work is based on an idea conjectured by Biggs and proved by Merino L\'opes, that the generating function of the set of recurrent configurations of the Chip-firing game of an undirected graph is equal to the partial Tutte polynomial $T_G(1,y)$ \cite{Big97,Lop97}. Based on a discrete dynamical system, this construction defines a polynomial that characterizes the graph supporting the dynamic. It is not straightforward to generalize those ideas to the class of Eulerian digraphs, but the results we will develop gives a promising direction for further extensions.

The Chip-firing game is a discrete dynamical system defined on a directed graph (digraph) $G$, where some chips are stored on each vertex of $G$. An assignment of chips on the vertices is called a \emph{configuration} of $G$, and a configuration can be transformed into a new configuration by the following rule: if a vertex $v$ of $G$ has as many chips as its out-degree and at least one out-going arc, then it is \emph{firable} and the diffusion process called \emph{firing v} consists in moving one chip of $v$ along each out-going arc to the corresponding vertex. The game playing with this rule is called \emph{Chip-firing game} (CFG), and $G$ is called \emph{support graph} of the game. A configuration is \emph{stable} if it has no firable vertex. It is known that starting from any initial configuration the game either plays forever or converges to a unique stable configuration. If $G$ has a \emph{global sink}, \emph{i.e.}, a vertex $s$ with out-degree $0$ and such that for any other vertex $v$ there is a path from $v$ to $s$, then the game always converges for any choice of initial configuration \cite{BLS91,BL92,HLMPPW08}. Throughout the paper we will concentrate on such CFG with a global sink. For a strongly connected digraph $G$, we will choose a particular vertex $s$ and consider it as the sink by removing all out-going arcs of $s$. The study of sink-independent properties (definitions that leads to the same object whatever vertex is chosen as the sink) will provide clues to define a natural analogue of the Tutte polynomial, for the class of Eulerian digraphs. The Chip-firing with a sink on digraphs has been introduced under the name Dollar game on undirected graphs.

The Dollar game is a variant of CFG on undirected graphs in which a particular vertex plays the role of a sink, and the sink can only be fired if all other vertices are not firable \cite{Big99}. In this model the number of chips stored in the sink may be negative. This definition leads naturally to the notion of \emph{recurrent configurations} (originally called \emph{critical configurations}) that are stable, and unchanged under firing at the sink and stabilizing the resulting configuration. The definition of the Dollar game on Eulerian digraphs is the same as on undirected graphs, \emph{i.e.} some vertex is chosen to be the sink that only can be fired only if all other vertices are not firable \cite{HLMPPW08}. In the rest of the paper we will use the name Chip-firing game with a sink instead of Dollar game.

The set of recurrent configurations of a CFG with a sink on an undirected graph has many interesting properties, such as it is an Abelian group with the addition defined by vertex to vertex addition of chip content followed by stabilization, and its cardinality is equal to the number of spanning trees of the support graph, \emph{etc}. Remarkably, Biggs defined the level of a recurrent configuration and made an intriguing conjecture about the relation between the generating function of recurrent configurations and the Tutte polynomial \cite{Big97}. This conjecture was later proved by Merino L\'opez \cite{Lop97}. A direct consequence is that the generating function of recurrent configurations of a CFG with a sink is independent of the chosen sink, and thus characterizes the support graph. This fact is definitely not trivial, and opened a new direction for studying graphs using the Chip-firing game as a tool \cite{CB03,Mer05}.

A lot of properties of recurrent configurations on undirected graphs can be extended to Eulerian digraphs without difficulty. However the situation is different when one tries to extend the sink-independence property of the generating function to a larger class of graphs, in particular to Eulerian digraphs, mainly because a natural definition of the Tutte polynomial for digraphs is unknown, nor is it for Eulerian digraphs. In this paper we develop a combinatorial approach, based on a level-preserving bijection between two sets of recurrent configurations with respect to two different sinks, to show that this sink-independence property also holds for Eulerian digraphs. This bijection provides new insights into the groups of recurrent configurations.

It turns out from the sink-independence property of the generating function, that this latter is a characteristic of the support Eulerian digraph, and we can denote it by $\Tut{G}{y}$ regardless of the sink. We will see that evaluations of $\Tut{G}{y}$ can be considered as extensions of $T_G(1,y)$ to Eulerian digraphs, which make us believe that the polynomial $\Tut{G}{y}$ is a natural generalization of $T_G(1,y)$. Furthermore, the most important feature is that $\Tut{G}{y}$ and $T_G(1,y)$ are equal on undirected graphs. It requires to be inventive to discover which objects the evaluations of $\Tut{G}{y}$ counts, and we hope that further properties will be found. The class of Eulerian digraphs is in-between undirected and directed graph, and following the track we develop in this paper, we propose some conjectures that would be promising directions of looking for a natural generalization of $T_G(x,y)$ to general digraphs.


The paper is divided into the following sections. Section \ref{recurrent section} recalls known results on recurrent configurations on a digraph with global sink. Section \ref{Eulerian section} is devoted to the Eulerian digraph case, and Section \ref{sink-independence section} establishes the sink-independence of the generating function of recurrent configurations in that case. The Tutte polynomial generalization is presented in Section \ref{tutte-like section}, and Section \ref{open section} hints at continuations of the present work.


\section{Recurrent configurations on a digraph with global sink}
\label{recurrent section}

All graphs in this paper are assumed to be multi-digraphs without loops. Graphs with loops will be considered in Section \ref{tutte-like section}. We introduce in this section some notations and known results about recurrent configurations of CFG with a sink on general digraphs, followed by straightforward considerations on the number of chips stored on vertices of recurrent configurations.

For a digraph $G=(V,A)$ and an arc $e \in A$, we denote by $\tail{e}$ and $\head{e}$ the tail and head of $e$, respectively. For two vertices $v,v'\in V$, let $\numarcs{G}{v}{v'}$ denote the number of arcs from $v$ to $v'$ in $G$. A configuration $c$ on $G$ is a map from $V$ to $\mathbb{N}$. A vertex $v$ is \emph{firable} in $c$ if and only if $c(v)\geq \outdegree{G}{v}>0$. Firing a firable vertex $v$ is the process that decreases $c(v)$ by $\outdegree{G}{v}$ and increases each $c(v')$ with $v'\neq v$ by $\numarcs{G}{v}{v'}$. A sequence $(v_1,v_2,\cdots,v_k)$ of vertices of $G$ is called a \emph{firing sequence} of a configuration $c$ if starting from $c$ we can consecutively fire the vertices $v_1,v_2,\cdots,v_k$. Applying the firing sequence leads to configuration $c'$ and we write $c\overset{v_1,v_2,\cdots,v_k}{\longrightarrow}c'$, or $c\overset{*}{\to}c'$ without specifying the firing sequence.


In the rest of this section we assume that $G$ has a global sink $s$. The definition of recurrent configurations is based on the convergence of the game, which is ensured if $G$ has a global sink. Since $s$ is not firable no matter how many chips it has, it makes sense to define a configuration to be a map from $V\backslash\set{s}$ to $\mathbb{N}$. When a chip goes into the sink, it vanishes. The interest is to assimilate two configurations that have the same number of chips on every vertices except on the sink. Note that in this section we consider only one fixed sink, but in subsequent sections we will consider the CFG relatively to different choices of sink, and therefore we will need some more notations. Let us not be overburdened yet, a configuration on $G$ with sink $s$ is a map $V\backslash\set{s} \to \mathbb{N}$.

We recall a basic result of the Chip-firing game on digraphs with a global sink.

\begin{lem}\cite{BLS91,BL92,HLMPPW08}
\label{CFG:convergence}
For any initial configuration $c$ the game converges to a unique stable configuration, denoted by $c^{\circ}$. Moreover, let $\mathfrak{f}$ and $\mathfrak{f}^{'}$ be two firing sequences of $c$ such that $c\overset{\mathfrak{f}}{\to}c^{\circ}$ and $c\overset{\mathfrak{f}'}{\to}c^{\circ}$, then for every vertex $v \neq s$ the number of times $v$ occurs in $\mathfrak{f}$ is the same as in $\mathfrak{f}'$.
\end{lem}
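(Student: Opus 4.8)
The statement bundles two claims: that the Chip-firing game started from $c$ always halts at some stable configuration, and that this configuration $c^{\circ}$ together with the per-vertex firing counts are independent of the chosen firing sequence. The plan is to prove termination first, and then to establish the \emph{abelian} (exchange) property, from which both the uniqueness of $c^{\circ}$ and the invariance of the firing counts follow simultaneously.

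For termination I would show that no infinite legal firing sequence from $c$ can exist, so the game necessarily reaches a stable configuration. Arguing by contradiction, suppose some legal firing sequence is infinite and let $U$ be the set of vertices that are fired infinitely often; then $U \neq \emptyset$ and $s \notin U$, since $s$ is never firable. Because $s$ is a global sink, every vertex of $U$ has a directed path to $s$, and as $s \notin U$ this path must leave $U$, so there is an arc from $U$ to $V \setminus U$. To turn this into a contradiction I would establish an a priori finite bound $N_v$ on the number of times each vertex $v$ can fire in \emph{any} legal sequence from $c$ --- for instance by induction on the distance from $v$ to the sink, using that a vertex accumulating chips without bound is fed only by finitely many external firings --- contradicting that the vertices of $U$ fire infinitely often.

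The core of the second part is a single monotonicity observation: firing a vertex $u$ never decreases $c(v)$ for any $v \neq u$, since it only pushes $u$'s own chips outward along its out-arcs, adding $\numarcs{G}{u}{v}$ chips to each $v$. Two consequences follow. First (commutation), if $u$ and $w$ are both firable in a configuration $d$ with $u \neq w$, then firing $u$ then $w$ and firing $w$ then $u$ are both legal and yield the same configuration, because each firing is a fixed integer translation of the chip vector and translations commute, while monotonicity guarantees the second vertex is still firable. Second, if $\mathfrak{f} = (v_1, \ldots, v_k)$ reaches a stable configuration and $w$ is firable in $c$, then $w$ must occur in $\mathfrak{f}$: otherwise monotonicity would keep $w$ firable all along $\mathfrak{f}$, contradicting stability of the final configuration. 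Letting $j$ be the first occurrence of $w$, the vertices $v_1, \ldots, v_{j-1}$ are all distinct from $w$, so repeated commutation slides $v_j = w$ to the front, giving a legal sequence $(w, v_1, \ldots, v_{j-1}, v_{j+1}, \ldots, v_k)$ that reaches the same stable configuration.

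With this exchange lemma in hand I would conclude by induction on the length $k$. Given two firing sequences $\mathfrak{f}$ and $\mathfrak{f}'$ from $c$ both reaching stable configurations, let $w$ be the first vertex of $\mathfrak{f}'$; bring $w$ to the front of $\mathfrak{f}$ as above, fire $w$ once, and apply the induction hypothesis to the two resulting shorter sequences acting on the configuration obtained from $c$ by firing $w$. This shows that $\mathfrak{f}$ and $\mathfrak{f}'$ use the same multiset of vertices, hence the same length and the same firing count at every $v \neq s$; combined with commutation, the same multiset applied to $c$ forces the same final configuration, giving uniqueness of $c^{\circ}$. I expect the main obstacle to be the termination step: an infinite legal sequence need not fire every firable vertex, so mere chip accumulation at a boundary vertex is not by itself contradictory, and one really needs the quantitative firing bound tied to reachability of the sink. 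The commutation and exchange arguments are clean once monotonicity is isolated, the only point requiring care being that choosing the \emph{first} occurrence of $w$ is what makes all earlier firings commute past it.
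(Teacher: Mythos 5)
The paper does not prove this lemma; it is quoted as a known result from the cited references, so there is no in-paper argument to compare against. Your proposal is correct and is essentially the classical proof from that literature: termination via an inductive firing bound tied to the distance to the global sink, followed by the monotonicity/commutation (abelian) exchange argument that yields both the uniqueness of $c^{\circ}$ and the invariance of the per-vertex firing counts.
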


The following is simple but very important, and will often be used without explicit reference.

\begin{lem}
For two configurations $c$ and $d$, we denote by $c+d$ the configuration given by $(c+d)(v)=c(v)+d(v)$ for any $v \neq s$. Then $(c+d)^{\circ}=(c^{\circ}+d)^{\circ}$.
\end{lem}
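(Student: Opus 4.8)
The plan is to exploit two structural facts about firing—that it acts on a configuration by adding a fixed vector, and that firability is monotone under the addition of chips—and then to conclude by the uniqueness of the stable configuration granted by Lemma~\ref{CFG:convergence}. Concretely, let $\mathfrak{f}=(v_1,\dots,v_k)$ be a firing sequence realizing the stabilization of $c$, so that $c\overset{\mathfrak{f}}{\to}c^{\circ}$. I claim that $\mathfrak{f}$ is also a valid firing sequence for $c+d$, and that applying it yields exactly $c^{\circ}+d$; the statement then follows immediately.

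For the additivity, I would observe that firing a vertex $v$ modifies a configuration by adding a fixed vector—it subtracts $\outdegree{G}{v}$ at $v$ and adds $\numarcs{G}{v}{v'}$ at each $v'\neq s$—independently of the current chip contents. Hence, applying one and the same sequence of firings to two configurations whose difference is $d$ produces two configurations that still differ by exactly $d$. It then remains to check that every firing of $\mathfrak{f}$ is legal when started from $c+d$. I would do this by induction on the length of the prefix already applied: writing $c_i$ and $c_i'$ for the configurations reached from $c$ and from $c+d$ after the first $i$ firings, we have $c_i'=c_i+d$ by the additivity just noted; since $v_{i+1}$ is firable in $c_i$, i.e. $c_i(v_{i+1})\geq\outdegree{G}{v_{i+1}}>0$, and $d$ takes values in $\mathbb{N}$, we get $c_i'(v_{i+1})=c_i(v_{i+1})+d(v_{i+1})\geq\outdegree{G}{v_{i+1}}>0$, so $v_{i+1}$ is firable in $c_i'$ as well.

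Combining the two observations gives $c+d\overset{\mathfrak{f}}{\to}c^{\circ}+d$. Continuing to stabilize yields $c+d\overset{*}{\to}(c^{\circ}+d)^{\circ}$, and the latter is stable by definition. Since Lemma~\ref{CFG:convergence} guarantees that the game started at $c+d$ converges to a \emph{unique} stable configuration, namely $(c+d)^{\circ}$, we must have $(c+d)^{\circ}=(c^{\circ}+d)^{\circ}$.

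I expect the only genuinely delicate point to be the monotonicity-of-firability step, where one must be sure that adding chips can never disable a firing that was previously available—this is the sole place where nonnegativity of configurations is used. Everything else is bookkeeping made routine by the linearity of firing and by the uniqueness of the limit configuration already at our disposal.
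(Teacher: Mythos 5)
Your proof is correct. The paper states this lemma without proof (it is introduced only as ``simple but very important''), and your argument --- run a stabilizing sequence of $c$ on $c+d$, using that firing adds a fixed vector so the difference $d$ is preserved and that extra chips never disable a firing, then invoke the uniqueness of the stable configuration from Lemma~\ref{CFG:convergence} --- is exactly the standard argument the authors presumably had in mind, with the one delicate point (monotonicity of firability, relying on $d\geq 0$) correctly identified and handled.
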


\begin{defi}
A stable configuration $c$ is \emph{recurrent} if and only if for any configuration $d$ there is a configuration $d'$ such that $c=(d+d')^{\circ}$.
\end{defi}
There are several equivalent definitions of \emph{recurrent} configurations. The one above says that $c$ is recurrent if and only if it can be reached from any other configuration $d$ by adding some chips (according to $d'$) and then stabilize.

Dhar proved that the set of recurrent configurations has an elegant algebraic structure \cite{Dha90}. Fix a linear order $v_1\prec v_2\prec \cdots \prec v_{n-1}$ on the vertices different from $s$, where $n=|V|$. Now a configuration of $G$ can be represented as a vector in $\mathbb{Z}^{n-1}$. For each $i \in [1..n-1]$ let $r_i$ be the vector in $\mathbb{Z}^{n-1}$ defined by $r_{i,j}=\numarcs{G}{v_i}{v_j}$ if $i\neq j$, otherwise $r_{i,j}=-\outdegree{G}{v_i}$ if $i=j$. Firing index $i$ then corresponds to adding the vector $r_i$. We define a binary relation $\sim$ over $\mathbb{Z}^{n-1}$ by $d\sim d'$ iff there exist $a_1,a_2,\cdots,a_{n-1}\in \mathbb{Z}$ such that $d-d'=\underset{1 \leq i\leq n-1}{\sum} a_i r_i$, \emph{i.e.} $d$ and $d'$ are linked by a (possibly impossible to perform) sequence of firings. The following states the nice algebraic structure of the set of all recurrent configurations of $G$ with sink $s$.

\begin{lem}\cite{HLMPPW08}
\label{CFG:recurrent:property:algebraic}
The set of all recurrent configurations of $G$ is an Abelian group with the addition $\oplus$ defined by $c_1\oplus c_2:=(c_1+c_2)^{\circ}$. This group is isomorphic to $\mathbb{Z}^n/\!\!<\!\!r_1,r_2,\cdots,r_{n-1}\!\!>$. Moreover, each equivalence class of $\mathbb{Z}^{n-1}/\!\!\sim$ contains exactly one recurrent configuration, and the number of recurrent configurations is equal to the number of equivalence classes.
\end{lem}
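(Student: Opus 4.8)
The plan is to route everything through the finite commutative monoid of stable configurations and then match it with the lattice quotient. Write $\Lambda=\langle r_1,\dots,r_{n-1}\rangle\subseteq\mathbb{Z}^{n-1}$ for the firing lattice; by definition of $\sim$ the equivalence classes of $\mathbb{Z}^{n-1}/\!\sim$ are exactly the cosets of $\Lambda$, and this quotient is finite because the matrix with rows $r_1,\dots,r_{n-1}$ is the reduced Laplacian of $G$, which is nonsingular whenever $G$ has a global sink (its determinant counts spanning in-trees rooted at $s$ and is positive). First I would check that the set $M$ of stable configurations is closed under $\oplus$ and that $\oplus$ is commutative and associative: commutativity is clear, and associativity is immediate from the identity $(c+d)^{\circ}=(c^{\circ}+d)^{\circ}$ of the preceding lemma, which gives $(c_1\oplus c_2)\oplus c_3=(c_1+c_2+c_3)^{\circ}=c_1\oplus(c_2\oplus c_3)$. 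Thus $(M,\oplus)$ is a finite commutative monoid.

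Next I would identify the recurrent configurations with the minimal ideal (kernel) of this monoid. Using $(d+d')^{\circ}=d^{\circ}\oplus d'^{\circ}$, the defining property ``$c=(d+d')^{\circ}$ for a suitable $d'$'' says exactly that $c$ is reachable from every stable configuration by an $\oplus$-translation; hence $c$ is recurrent iff $c\in\bigcap_{m\in M}(m\oplus M)$. Setting $z=\bigoplus_{m\in M}m$, one has $z\oplus M\subseteq m\oplus M$ for every $m$, so the recurrent configurations are precisely $K:=z\oplus M$, the minimal ideal of $M$. The standard fact that the kernel of a finite commutative monoid is an abelian group then yields the first assertion: $K$ is an abelian group under $\oplus$, with an identity element $e$ that is the unique idempotent of $K$. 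From $e\oplus e=e$ and the fact that stabilization never changes the $\sim$-class (firing only adds elements of $\Lambda$), I get $e+e\sim e$, hence $e\sim 0$.

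To obtain the isomorphism I would define $\psi\colon K\to\mathbb{Z}^{n-1}/\!\sim$ by $\psi(c)=c+\Lambda$. It is a homomorphism: since $c_1\oplus c_2\sim c_1+c_2$, we have $\psi(c_1\oplus c_2)=\psi(c_1)+\psi(c_2)$, and $\psi(e)=\Lambda$ because $e\sim 0$. Surjectivity is the \emph{existence} of a recurrent representative in each class: a coset of $\Lambda$ always contains a nonnegative configuration $a$ (translate into the positive orthant by an element of the full-rank lattice $\Lambda$), and then $e\oplus a^{\circ}\in K$ is recurrent and still $\sim a$ because $e\sim 0$ and $a^{\circ}\sim a$.

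The one genuinely hard step is injectivity, which is the \emph{uniqueness} of the recurrent representative: no two distinct recurrent configurations can be $\sim$-equivalent. Since $\psi$ is a homomorphism it suffices to show $\ker\psi=\{e\}$, i.e.\ that the only recurrent configuration in the class of $0$ is $e$. This is where the pure monoid formalism is insufficient and one must use the combinatorics of the dynamics; I would invoke the least action (abelian) principle — that the stabilization odometer of a configuration is its componentwise-minimal stabilizing firing vector, so that odometers are monotone in the starting configuration — to compare a recurrent $h\sim 0$ with $e$ and force $h=e$. Granting this, $\psi$ is a bijective homomorphism, so $K\cong\mathbb{Z}^{n-1}/\!\sim\,=\mathbb{Z}^{n-1}/\Lambda$, each $\sim$-class contains exactly one recurrent configuration, and the number of recurrent configurations equals the number of classes. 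I expect this uniqueness step to be the main obstacle, with the least-action principle (or, equivalently, a Dhar-style burning argument in the global-sink setting) the key tool.
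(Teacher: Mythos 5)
The paper does not prove this lemma --- it is imported wholesale from \cite{HLMPPW08} --- so there is no internal proof to compare against; I can only judge your argument on its own terms. Most of it is sound and is the standard architecture: the stable configurations form a finite commutative monoid under $\oplus$ (associativity via $(c+d)^{\circ}=(c^{\circ}+d)^{\circ}$), the recurrent configurations are exactly the minimal ideal $K=z\oplus M=\bigcap_{m\in M}(m\oplus M)$, the kernel of a finite commutative monoid is an abelian group, and $\psi(c)=c+\Lambda$ is a well-defined surjective homomorphism onto $\mathbb{Z}^{n-1}/\Lambda$ (your surjectivity argument via $\Lambda\supseteq N\mathbb{Z}^{n-1}$ and $e\oplus a^{\circ}$ is fine, as is finiteness of the quotient via the matrix--tree theorem). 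Incidentally you correctly read the paper's $\mathbb{Z}^{n}$ as $\mathbb{Z}^{n-1}$.

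The genuine gap is exactly where you flag it: injectivity of $\psi$, i.e.\ that two $\sim$-equivalent recurrent configurations coincide. You do not prove this; you say you ``would invoke the least action principle \dots\ to compare a recurrent $h\sim 0$ with $e$ and force $h=e$,'' and then proceed ``granting this.'' But the least action principle applied to $h$ and $e$ directly gives nothing: both are stable, so both odometers are zero, and monotonicity of odometers is vacuous. The known proofs need a further idea --- e.g.\ the characterization of a recurrent $c$ as $c=b^{\circ}$ for some $b$ exceeding the out-degree at every vertex (so that every vertex fires at least once in the stabilization), followed by a careful comparison of the two resulting firing vectors, or alternatively a counting argument matching $|K|$ with $\det$ of the reduced Laplacian via spanning in-trees. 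That this step is the real content of the lemma is underscored by the paper itself: in the Eulerian special case it spends all of Lemma \ref{CFG:maximum of total number of chips} and the burning argument of Lemma \ref{CFG:Dah} on establishing a closely related uniqueness/maximality statement. As written, your proposal reduces the lemma correctly to this uniqueness claim but leaves the claim unproven, so the proof is incomplete at its mathematical core.
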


The group in Lemma \ref{CFG:recurrent:property:algebraic} is called the \emph{Sandpile group} of $G$. The following simple properties can be derived easily from the definition of recurrent configuration.

\begin{lem}
\label{CFG:stabilization of large configuration}
The following holds
\begin{itemize}
\item[1.] Let $c$ be a configuration such that $c(v)\geq \outdegree{G}{v}-1$ for every $v\neq s$. Then $c^{\circ}$ is recurrent.
\item[2.] Let $c$ and $c'$ be two configurations such that $c(v)\leq c'(v)$ for any $v \neq s$. Then $\underset{v\neq s}{\sum}c(v)-\underset{v\neq s}{\sum}c^{\circ}(v)\leq \underset{v\neq s}{\sum}c'(v)-\underset{v\neq s}{\sum} {c'}^{\circ}(v)$. Moreover, if $c^{\circ}$ is recurrent then $c'^{\circ}$ is also recurrent.
\end{itemize}
\end{lem}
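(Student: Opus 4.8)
For part 1, the plan is to exploit the identity $(c+d)^{\circ}=(c^{\circ}+d)^{\circ}$ together with the elementary observation that every stable configuration $e$ satisfies $e(v)\le \outdegree{G}{v}-1$ for all $v\neq s$ (recall that a global sink forces $\outdegree{G}{v}\ge 1$ for $v\neq s$, since otherwise $v$ could not reach $s$). Given an arbitrary configuration $d$, I would first stabilize it to $d^{\circ}$; then $d^{\circ}(v)\le \outdegree{G}{v}-1\le c(v)$ for every $v\neq s$, so the map $d':=c-d^{\circ}$ is a genuine (nonnegative) configuration. Applying the addition identity twice gives $(d+d')^{\circ}=(d^{\circ}+d')^{\circ}=c^{\circ}$, and since $d$ was arbitrary this is exactly the definition of $c^{\circ}$ being recurrent.

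For the inequality in part 2, the central step is a monotonicity property of the firing counts. Writing $e:=c'-c\ge 0$, I would argue that any firing sequence stabilizing $c$ remains legal when started from $c'=c+e$: along the two runs the configurations differ by the fixed nonnegative vector $e$, so a vertex that is firable in the $c$-run is a fortiori firable in the $c'$-run. Hence from $c'$ one may first perform all the firings that stabilize $c$, reaching $c^{\circ}+e$, and then continue to $(c^{\circ}+e)^{\circ}=c'^{\circ}$. By the uniqueness of firing counts (Lemma~\ref{CFG:convergence}), the firing vector $x'$ of $c'$ dominates the firing vector $x$ of $c$ componentwise. A conservation argument---each firing of $v$ destroys exactly $\numarcs{G}{v}{s}$ chips, namely those sent along arcs into the sink---then shows $\sum_{v\neq s}c(v)-\sum_{v\neq s}c^{\circ}(v)=\sum_{v\neq s}x_v\,\numarcs{G}{v}{s}$, and likewise for $c'$ with $x'$. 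Since $x\le x'$ and $\numarcs{G}{v}{s}\ge 0$, the desired inequality follows.

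For the ``moreover'' clause, I would record the general fact that $(r+f)^{\circ}$ is recurrent whenever $r$ is recurrent and $f$ is any configuration: for arbitrary $d$, pick $d''$ with $r=(d+d'')^{\circ}$ by recurrence of $r$, and then $(r+f)^{\circ}=((d+d'')^{\circ}+f)^{\circ}=(d+(d''+f))^{\circ}$ by the addition identity, which witnesses recurrence. Applying this with $r=c^{\circ}$ and $f=e$, and using $c'^{\circ}=(c+e)^{\circ}=(c^{\circ}+e)^{\circ}$, yields that $c'^{\circ}$ is recurrent.

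I expect the monotonicity of firing counts to be the only genuinely non-formal point. One must check that adding chips never disables a firing, which is clean here because the increment $e$ stays fixed and nonnegative along the run, and one must invoke the abelian uniqueness of Lemma~\ref{CFG:convergence} to pass from ``a longer legal sequence exists'' to ``the firing vector dominates componentwise.'' Everything else reduces to bookkeeping with the addition identity and the stability bound $e(v)\le \outdegree{G}{v}-1$.
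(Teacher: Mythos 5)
Your proposal is correct and follows essentially the same route as the paper: part 1 via $d'=c-d^{\circ}$ and the addition identity, the inequality in part 2 by extending a stabilizing firing sequence of $c$ to one of $c'$ and counting chips sent into the sink, and the ``moreover'' clause via $d''=d'+(c'-c)$. Your explicit justification that $d^{\circ}(v)\le \outdegree{G}{v}-1\le c(v)$ (so that $d'$ is nonnegative) is a detail the paper glosses over, but the arguments are the same.
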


\begin{proof}
\text{}\\[-1em]
\begin{itemize}
\item[1.] For any configuration $d$, adding grains according to $d'=c-d^{\circ}$ leads to $c$, and $d'$ is a configuration with positive chip content on each vertex. Clearly, $(d+d')^{\circ}=(d+c-d^{\circ})^{\circ}=(d^{\circ}+c-d^{\circ})^{\circ}=c^{\circ}$, therefore $c^{\circ}$ is recurrent.
\item[2.] Let $\mathfrak{f}=(v_1,v_2,\cdots,v_k)$ be a firing sequence of $c$  such that $c\overset{\mathfrak{f}}{\to}c^{\circ}$. Since $\underset{v\neq s}{\sum}c(v)-\underset{v\neq s}{\sum}c^{\circ}(v)$ is the number of chips lost into the sink, we have $\underset{v\neq s}{\sum}c(v)-\underset{v\neq s}{\sum}c^{\circ}(v)=\underset{1 \leq i\leq k}{\sum} \numarcs{G}{v_i}{s}$. Since $c(v)\leq c'(v)$ for any $v\neq s$, $\mathfrak{f}$ is also a firing sequence of $c'$. Therefore there is a firing sequence $\mathfrak{f}'=(v_1,v_2,\cdots,v_k,v_{k+1},v_{k+2},\cdots,v_l)$ of $c'$ such that $c'\overset{\mathfrak{f}'}{\to}{c'}^{\circ}$. For the same reason we have $\underset{v\neq s}{\sum} c'(v)-\underset{v\neq s}{\sum} {c'}^{\circ}(v)=\underset{1 \leq i \leq l}{\sum}d(v_i,s)$. The first claim follows.

Let $d$ be an arbitrary configuration. Since $c^{\circ}$ is recurrent, there is a configuration $d'$ such that $(d+d')^{\circ}=c^{\circ}$. Let $d''=d'+c'-c$ be a configuration. We have $(d+d'')^{\circ}=(d+d'+c'-c)^{\circ}=((d+d')^{\circ}+c'-c)^{\circ}=(c^{\circ}+c'-c)^{\circ}=(c+c'-c)^{\circ}=c'^{\circ}$, thus $c'^{\circ}$ is recurrent.\\[-2em]
\end{itemize}
\end{proof}

\section{Chip-firing game on an Eulerian digraph with a sink}
\label{Eulerian section}

Let $G=(V,A)$ be a digraph. The digraph $G$ is \emph{Eulerian} if $G$ is connected and for every $v \in V$ we have $\indegree{G}{v}=\outdegree{G}{v}$. With this condition the digraph $G$ is strongly connected. In this section we assume that $G$ is Eulerian, and present properties that recurrent configurations verify in that case.

As in the previous section, the definition of recurrent configuration is based on the convergence of the game. Therefore a global sink plays an important role in the definition. The digraph $G$ is strongly connected, therefore it has no global sink and the game may play forever from some initial configurations. To overcome this issue, we distinguish a particular vertex of $G$ that plays the role of the sink. Let $s$ be a vertex of $G$, by removing all outgoing arcs of $s$ from $G$ we got the digraph $\deleteoutarcs{G}{s}$ that has a global sink $s$. The Chip-firing game on $G$ with sink $s$ is the ordinary Chip-firing game that is defined on $\deleteoutarcs{G}{s}$, and recurrent configurations are defined as presented above, on $\deleteoutarcs{G}{s}$. Figures \ref{fig:im1} and \ref{fig:im2} present an example of $G$ and $\deleteoutarcs{G}{s}$. It is a good way to think of the Chip-firing game on an Eulerian digraph with a sink as the ordinary Chip-firing game on $G$ with a fixed vertex that never fires in the game no matter how many chips it has. In this section we consider a fixed sink $s$.

A configuration of the Chip-firing game on $G$ with sink $s$ is a map from $V(G)\backslash \set{s}$ to $\mathbb{N}$. To verify the recurrence of a configuration $c$, we have to test the condition that for any configuration $d$ there is a configuration $d'$ such that $(d+d')^{\circ}=c$. This is a tiresome task. However, in the case of Eulerian digraphs we have the following useful criterion.

\begin{lem}\cite{Dha90,HLMPPW08}
\label{CFG:Dah}
A configuration $c$ is recurrent  if and only if $(c+\beta)^{\circ}=c$, where $\beta$ is the configuration defined by $\beta(v)=\numarcs{G}{s}{v}$ for every $v \neq s$. Moreover, if $c$ is recurrent then each vertex distinct from $s$ occurs exactly once in any firing sequence $\mathfrak{f}$ from $c+\beta$ to $(c+\beta)^{\circ}$.
\end{lem}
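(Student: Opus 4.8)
The plan is to treat the two directions separately, using throughout the observation that adding $\beta$ amounts to firing the sink once. Concretely, I would first record the identity $\sum_{v\neq s} r_v=-\beta$: evaluating the left-hand side at a vertex $w\neq s$ gives $\indegree{G}{w}-\outdegree{G}{w}-\numarcs{G}{s}{w}=-\numarcs{G}{s}{w}=-\beta(w)$ by the Eulerian condition (recall $G$ has no loops). In particular $\beta=-\sum_{v\neq s}r_v$ lies in the lattice $\langle r_1,\dots,r_{n-1}\rangle$, so $c+\beta\sim c$ for every configuration $c$; since firing preserves the $\sim$-class, $(c+\beta)^{\circ}$ always lies in the same $\sim$-class as $c$.

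For the forward implication, suppose $c$ is recurrent. Then $c$ is stable, so $c^{\circ}=c$, and $c\leq c+\beta$ because $\beta\geq 0$. By part 2 of Lemma \ref{CFG:stabilization of large configuration} the configuration $(c+\beta)^{\circ}$ is then recurrent, and by the previous paragraph it lies in the $\sim$-class of $c$. Since by Lemma \ref{CFG:recurrent:property:algebraic} each $\sim$-class contains exactly one recurrent configuration, I conclude $(c+\beta)^{\circ}=c$. The ``moreover'' part then follows by a linear-algebra argument that in fact holds for any $c$ with $(c+\beta)^{\circ}=c$: if $x_v$ denotes the number of times $v$ fires while stabilizing $c+\beta$, then $c=(c+\beta)+\sum_{v\neq s}x_v r_v$, whence $\sum_{v\neq s}x_v r_v=-\beta=\sum_{v\neq s}r_v$ and $\sum_{v\neq s}(x_v-1)r_v=0$. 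Because $\langle r_1,\dots,r_{n-1}\rangle$ has finite index in $\mathbb{Z}^{n-1}$ (Lemma \ref{CFG:recurrent:property:algebraic}), the $r_v$ are linearly independent, forcing $x_v=1$ for all $v$; Lemma \ref{CFG:convergence} then guarantees that every firing sequence from $c+\beta$ to $(c+\beta)^{\circ}$ uses each non-sink vertex exactly once.

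For the backward implication I would argue that the condition $(c+\beta)^{\circ}=c$ singles out, inside each $\sim$-class, the unique recurrent configuration. Let $S$ be the set of configurations with $(c+\beta)^{\circ}=c$; such a $c$ is automatically stable, being a stabilization. The forward implication shows that every recurrent configuration lies in $S$, and Lemma \ref{CFG:recurrent:property:algebraic} tells us these representatives occupy all the $\sim$-classes, one each; so it suffices to prove that $c\mapsto$ (its $\sim$-class) is injective on $S$, which then forces $S$ to coincide with the set of recurrent configurations. To establish recurrence of a given $c\in S$ more directly, I would use that a stable configuration is recurrent iff it is accessible from the maximal stable configuration $c_{\max}$ with $c_{\max}(v)=\outdegree{G}{v}-1$: this $c_{\max}$ is recurrent by part 1 of Lemma \ref{CFG:stabilization of large configuration} and hence accessible from every configuration, so by Lemma \ref{CFG:convergence} (composition of stabilizations) and part 2 of Lemma \ref{CFG:stabilization of large configuration}, accessibility from $c_{\max}$ is equivalent to recurrence. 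Using strong connectivity of $G$ one can exhibit a nonnegative vector $g$ in the firing lattice that is strictly positive on every non-sink vertex, so that for large $k$ the configuration $c+kg\geq c_{\max}$ is accessible from $c_{\max}$, is recurrent, and lies in the $\sim$-class of $c$.

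The main obstacle is precisely this last step: a large configuration in the $\sim$-class of $c$ stabilizes to the recurrent representative of that class, and what must be shown is that this representative is $c$ itself rather than merely some configuration equivalent to it — equivalently, the injectivity of $c\mapsto$ (its $\sim$-class) on $S$ stated above. I expect overcoming it to require the rigidity extracted in the forward ``moreover'' part, namely that stabilizing $c+k\beta$ returns to $c$ with the completely prescribed toppling numbers $x_v=k$ (the same identity gives $\sum_{v\neq s}(x_v-k)r_v=0$), combined with the abelian property of Lemma \ref{CFG:convergence} and strong connectivity to exclude two distinct elements of $S$ in one $\sim$-class. Everything else reduces to routine bookkeeping with the lattice $\langle r_1,\dots,r_{n-1}\rangle$ and the monotonicity of Lemma \ref{CFG:stabilization of large configuration}.
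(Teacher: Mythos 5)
First, a point of comparison: the paper does not prove this lemma at all --- it is quoted from \cite{Dha90,HLMPPW08} --- so your proposal can only be judged on its own terms and against the toolkit the paper makes available. Your forward direction and the ``moreover'' clause are correct and complete: the identity $\sum_{v\neq s} r_v=-\beta$ is right (this is exactly where the Eulerian hypothesis enters), Lemma \ref{CFG:stabilization of large configuration}(2) together with the uniqueness of the recurrent representative of a $\sim$-class (Lemma \ref{CFG:recurrent:property:algebraic}) gives $(c+\beta)^{\circ}=c$, and your linear-independence argument (finite index of $\langle r_1,\dots,r_{n-1}\rangle$, hence full rank) correctly forces every firing multiplicity to equal $1$, with Lemma \ref{CFG:convergence} transferring this to every firing sequence.

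The backward direction, however, is left with a genuine gap that you yourself flag. The specific device you propose --- a nonnegative lattice vector $g$ strictly positive on all non-sink vertices, so that $c+kg\geq c_{\max}$ --- does not mesh with the hypothesis: $(c+\beta)^{\circ}=c$ only controls additions of $\beta$ and gives no handle on $(c+kg)^{\circ}$; you would know that $(c+kg)^{\circ}$ is the recurrent representative of $[c]_{\sim}$ but not that it equals $c$, which is exactly the circle you describe. The repair is to stay with $\beta$ itself: from $(c+\beta)^{\circ}=c$ and the composition rule $(c+d)^{\circ}=(c^{\circ}+d)^{\circ}$ you get $(c+k\beta)^{\circ}=c$ for every $k$ by induction; then, since every non-sink vertex is reachable in $\deleteoutarcs{G}{s}$ from some out-neighbour of $s$, for $k$ large enough one can legally fire $c+k\beta$ into a configuration $c''$ with $c''(v)\geq\outdegree{G}{v}-1$ everywhere, whence $(c+k\beta)^{\circ}=c''^{\circ}$ is recurrent by Lemma \ref{CFG:stabilization of large configuration}(1), i.e.\ $c$ is recurrent. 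This chip-spreading step is exactly the one the paper performs inside its proof of Lemma \ref{CFG:maximum of total number of chips}, so no new idea is needed --- but as written your argument stops short of it, and the ``injectivity on $S$'' reformulation does not remove the difficulty, it only renames it.
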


\begin{figure}
\centering
\subfloat[$G$]{\label{fig:im1}\includegraphics[bb=0 0 260 118,width=1.88in,height=0.848in,keepaspectratio]{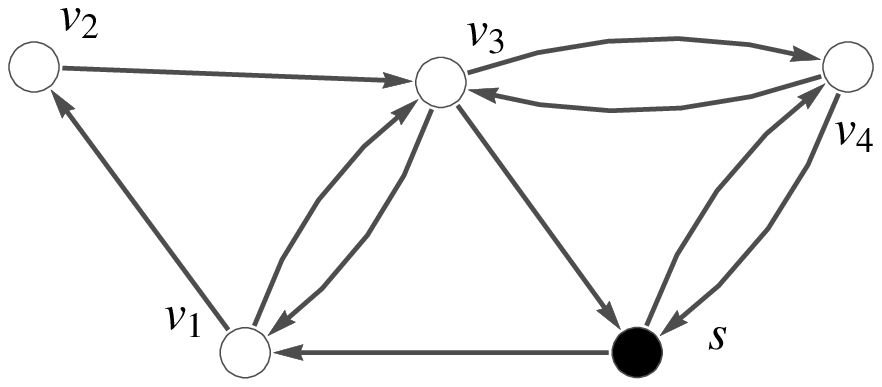}}\quad \quad
\subfloat[$\deleteoutarcs{G}{s}$]{\label{fig:im2}\includegraphics[bb=0 0 260 118,width=1.88in,height=0.848in,keepaspectratio]{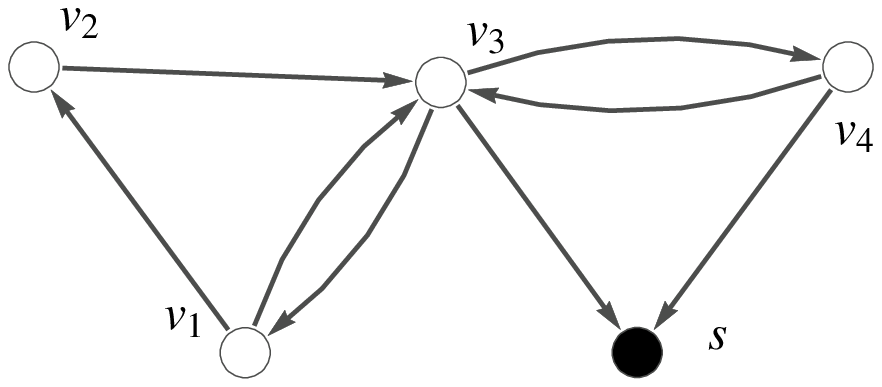}}\\
\subfloat[A configuration $c$]{\label{fig:im3}\includegraphics[bb=0 0 352 157,width=1.88in,height=0.839in,keepaspectratio]{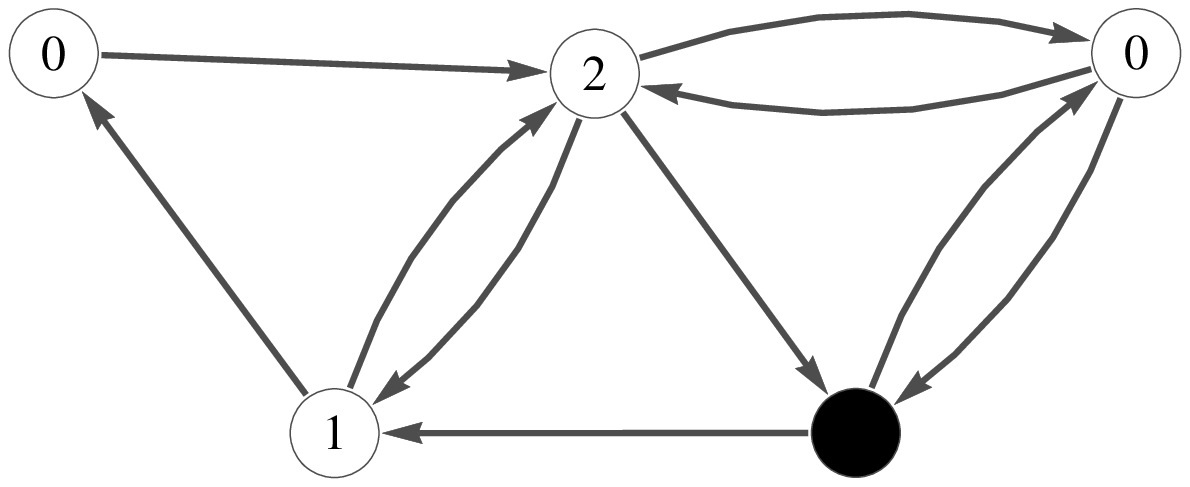}}\quad \quad
\subfloat[$c+\beta$]{\label{fig:im4}\includegraphics[bb=0 0 330 148,width=1.88in,height=0.841in,keepaspectratio]{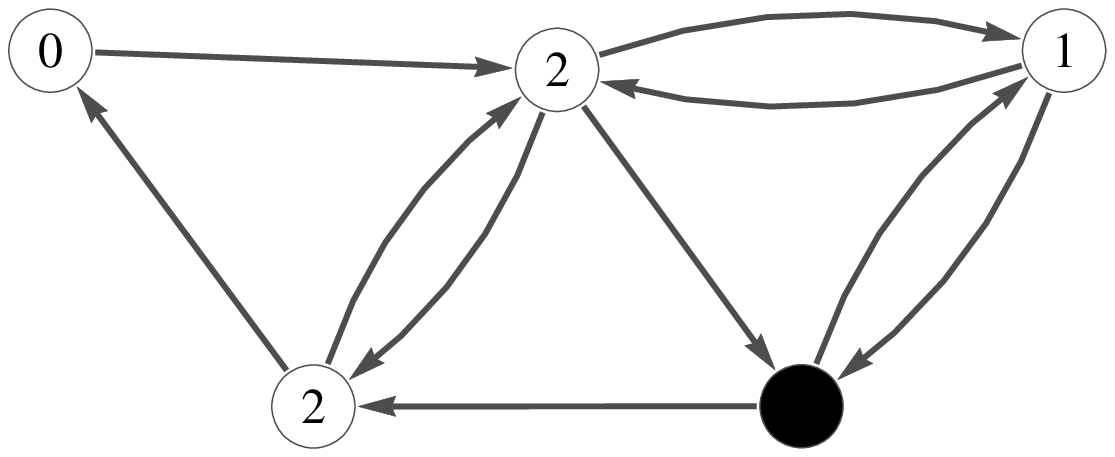}}
\caption{Burning algorithm}
\label{fig:im01020304}
\end{figure}

Figure \ref{fig:im3} presents a configuration $c$. The configuration $c+\beta$ is presented in Figure \ref{fig:im4}, adding $\beta$ corresponds to firing the sink. To verify the recurrence of $c$, one computes $(c+\beta)^{\circ}$. Starting with $c+\beta$ we fire consecutively the vertices $v_1,v_3,v_2,v_4$ in this order and get exactly the configuration $c$, therefore $c$ is recurrent. This procedure is called \emph{Burning algorithm}. The following will be important later.

\begin{lem}
\label{CFG:maximum of total number of chips}
Let $c$ and $c'$ be two stable configurations such that $c$ and $c'$ are in the same equivalence class. If $c$ is recurrent then $\underset{v\neq s}{\sum}c'(v)\leq \underset{v\neq s}{\sum} c(v)$. 
\end{lem}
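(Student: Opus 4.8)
The plan is to connect $c'$ to $c$ through the operator $\phi(x):=(x+\beta)^{\circ}$, which corresponds to one ``sink firing'' (adding $\beta$) followed by stabilization, and to control the total number of chips along the way. First I would record two structural consequences of the Eulerian hypothesis: summing the balance $\indegree{G}{v}=\outdegree{G}{v}$ over $V\backslash\set{s}$ yields the identity $\beta=-\sum_{i=1}^{n-1} r_i$ in $\mathbb{Z}^{n-1}$, so $\beta\sim 0$ and hence $x\sim\phi(x)$ for every $x$; and by Lemma \ref{CFG:recurrent:property:algebraic} the group $\mathbb{Z}^{n-1}/\!\!<\!\!r_1,\cdots,r_{n-1}\!\!>$ is finite, which forces $r_1,\cdots,r_{n-1}$ to be linearly independent over $\mathbb{Q}$. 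Using the basic identity $(c+d)^{\circ}=(c^{\circ}+d)^{\circ}$ one gets $\phi^k(c')=(c'+k\beta)^{\circ}$, and since $c\sim c'$ is given, all these configurations lie in the same class as $c$.

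The heart of the argument is a single-firing estimate: for any stable $x$, each vertex $v\neq s$ fires at most once in the stabilization of $x+\beta$. I would prove this with the Burning algorithm, firing the sink (adding $\beta$) and then firing each vertex that becomes firable exactly once; call $\mathcal B$ the set of vertices that fire and $\mathcal U$ the others. It suffices to check that firing $\mathcal B$ once already produces a stable configuration, and this is exactly where the Eulerian condition is essential: a vertex $v\in\mathcal B$ receives at most $\indegree{G}{v}=\outdegree{G}{v}$ chips, and since $x(v)\le\outdegree{G}{v}-1$, after firing it holds fewer than $\outdegree{G}{v}$ chips and cannot fire again, while a vertex in $\mathcal U$ never became firable and receives nothing more. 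By the uniqueness part of Lemma \ref{CFG:convergence}, this $0/1$ vector is the odometer of $x+\beta$.

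From this estimate the total chip count cannot decrease under $\phi$. Passing from $x$ to $x+\beta$ adds exactly $\outdegree{G}{s}$ chips, whereas the chips lost into the sink during stabilization equal $\sum_{v\in\mathcal B}\numarcs{G}{v}{s}\le\sum_{v\neq s}\numarcs{G}{v}{s}=\indegree{G}{s}=\outdegree{G}{s}$, again by Eulerian balance at $s$. Hence $\sum_{v\neq s}\phi(x)(v)\ge\sum_{v\neq s}x(v)$ for every stable $x$, and iterating gives the nondecreasing chain $\sum_{v\neq s}c'(v)\le\sum_{v\neq s}\phi(c')(v)\le\cdots\le\sum_{v\neq s}\phi^k(c')(v)$.

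It remains to check that the orbit $\phi^k(c')$ actually reaches $c$. The sequence lives in the finite set of stable configurations of the class of $c$, so it is eventually periodic. Combining the $0/1$ odometer with $\beta=-\sum_i r_i$ gives the clean formula $\phi(x)=x-\sum_{v\in\mathcal U}r_v$; telescoping around any period $x_0,x_1,\cdots,x_p=x_0$ then yields $\sum_v m_v r_v=0$, where $m_v$ counts the occurrences of $v$ in the sets $\mathcal U$ along the cycle. Linear independence of the $r_v$ forces all $m_v=0$, so every $\mathcal U$ along the cycle is empty, i.e. each configuration of the cycle is a fixed point of $\phi$. By Lemma \ref{CFG:Dah} a fixed point of $\phi$ is precisely a recurrent configuration, and by uniqueness of the recurrent configuration in a class it must be $c$; hence $\phi^k(c')=c$ for large $k$, and the chain above gives $\sum_{v\neq s}c'(v)\le\sum_{v\neq s}c(v)$. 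The main obstacle is the single-firing estimate of the second paragraph: everything else is bookkeeping built on top of it, and it is the one step that uses $\indegree{G}{v}=\outdegree{G}{v}$ in an essential way.
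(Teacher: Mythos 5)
Your proof is correct, and its first half (the single-firing estimate via the Eulerian balance $\indegree{G}{v}=\outdegree{G}{v}$, and the consequent monotonicity of the total chip count under $x\mapsto(x+\beta)^{\circ}$) is essentially the paper's argument. Where you genuinely diverge is in showing that iterating $\phi(x)=(x+\beta)^{\circ}$ from $c'$ eventually lands on $c$. The paper argues dynamically: using strong connectivity, for $n$ large enough $c'+n\beta$ can be driven to a configuration with at least $\outdegree{G}{v}-1$ chips everywhere, whose stabilization is recurrent by Lemma \ref{CFG:stabilization of large configuration}, hence equal to $c$ by uniqueness in the equivalence class. You instead argue algebraically: the orbit is eventually periodic in the finite set of stable configurations of the class, the identity $\beta=-\sum_i r_i$ together with the $0/1$ odometer gives $\phi(x)=x-\sum_{v\in\mathcal U}r_v$, telescoping around a period yields a vanishing nonnegative integer combination of the $r_i$, and linear independence (from finiteness of the sandpile group) forces the periodic part to consist of fixed points of $\phi$, which are exactly the recurrent configurations by Lemma \ref{CFG:Dah}. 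Your route avoids the path-construction/strong-connectivity step and replaces it with the rank argument plus Dhar's criterion; the paper's route avoids invoking Lemma \ref{CFG:Dah} and the linear independence of the $r_i$. Both ultimately rest on the uniqueness of the recurrent configuration in an equivalence class. One phrase in your second paragraph is loose: a vertex of $\mathcal U$ may well receive chips from vertices of $\mathcal B$; the correct statement is that it receives at most $\indegree{G}{v}$ of them in total and, by the halting condition of the greedy process, never accumulates enough to become firable --- this does not affect the validity of the argument.
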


\begin{proof}
Let $\beta$ be the configuration that is defined as in Lemma \ref{CFG:Dah} and $d$ be an arbitrary stable configuration. We claim that for any firing sequence $\mathfrak{f}=(v_1,v_2,\cdots,v_k)$ of $d+\beta$ such that $d+\beta\overset{\mathfrak{f}}{\to}(d+\beta)^{\circ}$ each vertex of $G$ occurs at most once in $\mathfrak{f}$. For a contradiction we assume otherwise. This assumption implies that there is a first repetition, \emph{i.e.}, there is $p\in [1..k]$ such that $v_1,v_2,\cdots,v_{p-1}$ are pairwise-distinct and $v_p=v_q$ for some $q \in [1..p-1]$. We denote $d'$ the configuration obtained from $d+\beta$ after the vertices $v_1,v_2,\cdots,v_{p-1}$ have been fired. We will now show that $v_p$ is not firable in $d'$, a contradiction. Let $r$ be the number of chips $v_p$ receives from its in-neighbors when the vertices $v_1,v_2,\cdots,v_{p-1}$ have been fired. Adding $\beta$ corresponds to firing the sink, thus $d'(v_p)=d(v_p)+\numarcs{G}{s}{v_p}+r - \outdegree{G}{v_p}$. Since $v_1,v_2,\cdots,v_{p-1}$ are pairwise-distinct and different from the sink, we have $r+\numarcs{G}{s}{v_p} \leq \indegree{G}{v_p}$. The digraph $G$ is Eulerian, therefore $\indegree{G}{v_p} = \outdegree{G}{v_p}$ and from the previous equality we have $d'(v_p) \leq d(v_p)$, but $d$ is stable so vertex $v_p$ is not firable in configuration $d'$, which is absurd.

Since each of the in-neighbors of $s$ is fired at most once in any firing sequence $\mathfrak{f}$ of $d+\beta$ such that $d+\beta\overset{\mathfrak{f}}{\to}(d+\beta)^{\circ}$, it follows that no more chips than that added to $d$ (that is $\sum \limits_{v \neq s} \beta(v)$) can end up in the sink since $G$ is Eulerian, and consequently $\underset{v \neq s}{\sum} d(v)\leq \underset{v \neq s}{\sum} (d+\beta)^{\circ}(v)$. Repeating the application of this inequality $n$ times we have $\underset{v\neq s}{\sum} d(v)\leq \underset{v\neq s}{\sum} (d+n\beta)^{\circ}(v)$, where $n\beta$ is the configuration given by $(n\beta)(v)=n\,\beta(v)$ for any $v\neq s$. This reasoning can be applied to $c'$ and we have $\underset{v\neq s}{\sum}c'(v)\leq \underset{v\neq s}{\sum} (c'+n\beta)^{\circ}(v)$ for any $n \in \mathbb{N}$. Since for any vertex $v\neq s$ and any $v'$ being an out-neighbor of $s$ there is a path in $\deleteoutarcs{G}{s}$ from $v'$ to $v$, with $n$ large enough we can add a sufficient number of chips so that there is an appropriate firing sequence $\mathfrak{f}'$ of $c'+n\beta$ with $c'+n \beta\overset{\mathfrak{f}'}{\to}c''$ and such that $c''(v)\geq \outdegree{G}{v}-1$ for any $v\neq s$. When stabilizing $c''$, it follows from Lemma \ref{CFG:convergence} (convergence), Lemma \ref{CFG:stabilization of large configuration} (recurrence) and Lemma \ref{CFG:recurrent:property:algebraic} (unicity of recurrent configuration in an equivalent class) that it leads to $c$, that is, $c=c''^{\circ}=(c'+n\beta)^{\circ}$. This completes the proof.
\end{proof}

\begin{ques}
Does the claim of Lemma \ref{CFG:maximum of total number of chips} hold for a general digraph with a global sink?
\end{ques}

Note that if this statement is true, then it is tight. Figure \ref{fig:conf12} presents an example, on an undirected graph, of a recurrent configuration and a non-recurrent configuration belonging to the same equivalence class, such that they contain the same total number of chips. As a consequence, the recurrent configuration is not necessarily the unique configuration of maximum total number of chips over stable configurations of its equivalence class.

\begin{figure}
\centering
\subfloat[A recurrent configuration]{\label{fig:conf1}\includegraphics[bb=0 0 273 90,width=1.96in,height=1.22in]{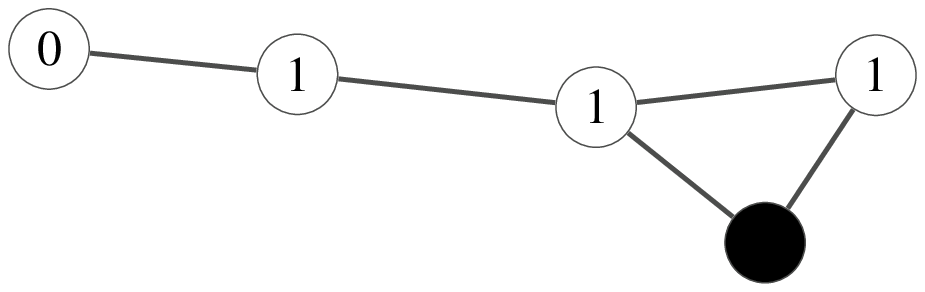}}\quad \quad
\subfloat[A non-recurrent configuration]{\label{fig:conf2}\includegraphics[bb=0 0 278 92,width=1.96in,height=1.22in]{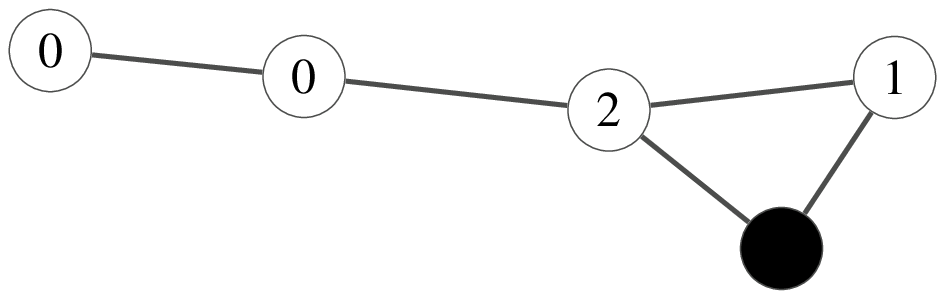}}
\caption{Two stable configurations from the same equivalence class on an undirected graph}
\label{fig:conf12}
\end{figure}

\section{Sink-independence of generating function of recurrent configurations of an Eulerian digraph}
\label{sink-independence section}

\begin{figure}[!h]
\centering
\includegraphics[bb=0 0 736 224,width=4.34in,height=1.32in,keepaspectratio]{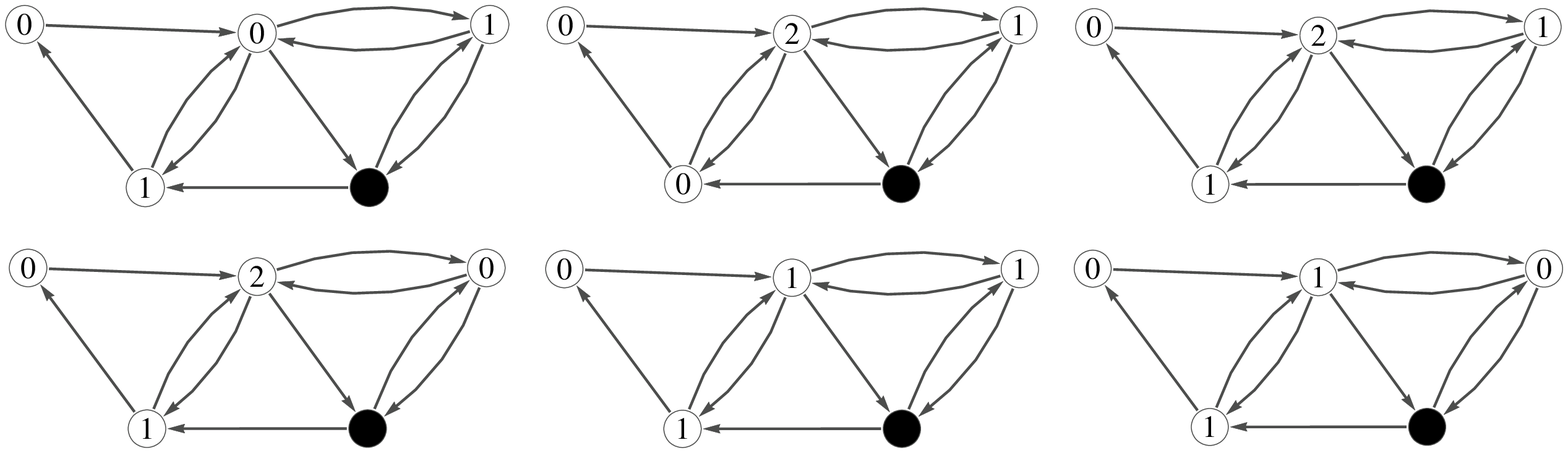}
\caption{Recurrent configurations with respect to sink $s$}
\label{fig:im5}
\end{figure}

\begin{figure}[!h]
\centering
\includegraphics[bb=0 0 790 247,width=4.34in,height=1.36in,keepaspectratio]{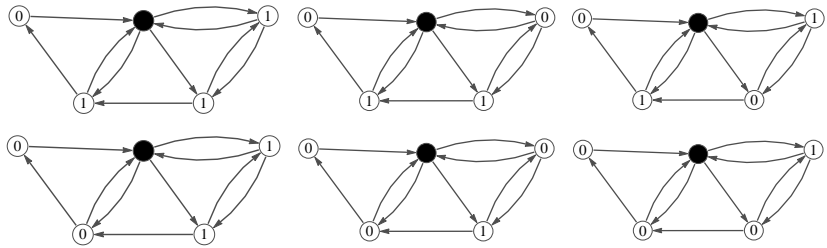}
\caption{Recurrent configurations with respect to sink $v_3$}
\label{fig:im6}
\end{figure}

\hspace{4.ex} \textbf{Key observation.}
Let us give an important observation that motivates the study presented in this paper. We consider the Chip-firing game on the digraph drawn on Figure \ref{fig:im1}. In this game the vertex $s$ is chosen to be sink. All the recurrent configurations are presented in Figure \ref{fig:im5}. For each recurrent configuration we compute the sum of chips on the vertices different from the sink. We get the sorted sequence of numbers $(2, 2, 3, 3, 3, 4)$. If $v_3$ is chosen to be the sink of the game, all the recurrent configurations are given in Figure \ref{fig:im6}, and the sum of chips on vertices different from the sink gives the sorted sequence $(1, 1, 2, 2, 2, 3)$. The two sequences are the same up to adding a constant sequence. This property also holds with other choices of sink, therefore, up to a constant, this sequence is characteristic of the support graph itself. This interesting property is the main discovery exploited in this paper, and allows to generalize the construction presented in \cite{Big97} and proved in \cite{Lop97} of an analogue for the Tutte polynomial to the class of Eulerian digraphs. It is stated in the following theorem.

\begin{theo}
\label{CFG:sink-independence}
Let $G$ be an Eulerian digraph and $s$ a vertex of $G$. For each recurrent configuration with respect to sink $s$, let $\confdegree{G}{s}{c}$ denote $\outdegree{G}{s}+\underset{v \neq s}{\sum}c(v)$. The recurrent configurations with respect to sink $s$ are denoted by $c_1,c_2,\cdots,c_p$ for some $p$. Then the sequence $(\confdegree{G}{s}{c_i})_{1\leq i \leq p}$ is independent of the choice of $s$ up to a permutation of the entries.
\end{theo}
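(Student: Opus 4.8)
The plan is to recast recurrence on the punctured graph $\deleteoutarcs{G}{s}$ as a sink-free statement on $G$ itself, so that $\confdegree{G}{s}{c}$ becomes a quantity that is manifestly conserved by firing. Given a recurrent configuration $c$ with respect to sink $s$, extend it to a configuration $\hat c$ on all of $V$ by setting $\hat c(s):=\outdegree{G}{s}$ and $\hat c(v):=c(v)$ for $v\neq s$. On the Eulerian digraph $G$ itself (no vertex removed, so that no chip is ever lost) I claim $\hat c$ has $s$ as its unique firable vertex, and that firing $s$ once and then stabilizing in the burning order fires every vertex of $G$ exactly once and returns to $\hat c$. This uses Dhar's criterion (Lemma \ref{CFG:Dah}), which guarantees that from $c+\beta$ each non-sink vertex fires exactly once, together with the elementary fact that on an Eulerian digraph firing every vertex exactly once is the identity (each $v$ loses $\outdegree{G}{v}$ and receives $\indegree{G}{v}=\outdegree{G}{v}$). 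The crucial gain is that $\confdegree{G}{s}{c}=\sum_{v\in V}\hat c(v)$, the total number of chips of $\hat c$ on $G$, and this total is invariant under any firing on $G$. Thus the theorem is equivalent to the assertion that the multiset of totals $\sum_{v\in V}\hat c(v)$, taken over the configurations $\hat c$ arising this way, is independent of $s$.

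Since $G$ is Eulerian it is strongly connected, hence its underlying undirected graph is connected; composing bijections along a path, it therefore suffices to compare two sinks $s,s'$ joined by an arc, and after possibly exchanging their roles I may assume there is an arc from $s$ to $s'$. Write $A_s$ for the set of extended configurations $\hat c$ produced above, namely those with $\hat c(s)=\outdegree{G}{s}$, all other vertices stable, and admitting a complete legal firing cycle on $G$; by the reformulation $A_s$ is in level-preserving bijection with the recurrent configurations for sink $s$, the level being the total $\sum_{v\in V}\hat c(v)$. The goal becomes to exhibit a bijection $\Phi\colon A_s\to A_{s'}$ with $\sum_{v\in V}\hat c(v)=\sum_{v\in V}\Phi(\hat c)(v)$.

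Because the total is firing-invariant, it is natural to define $\Phi$ by moving $\hat c$ within its firing dynamics on $G$ and re-anchoring at $s'$. The plan is to realize $\Phi$ as an anchor transfer across the arc from $s$ to $s'$: fire $s$ to release its $\outdegree{G}{s}$ chips into $G$ and let the resulting firings carry chips toward $s'$, halting at the precise configuration $\hat d$ in which $s'$ holds exactly $\outdegree{G}{s'}$ chips and every other vertex is stable, so that $\hat d\in A_{s'}$ and $\sum_{v\in V}\hat d(v)=\sum_{v\in V}\hat c(v)$. The Eulerian balance $\indegree{G}{v}=\outdegree{G}{v}$ at every $v$ is what makes this chip accounting close up, and the inverse of $\Phi$ is the symmetric transfer from $s'$ back to $s$. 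I would use Lemma \ref{CFG:maximum of total number of chips} together with Lemmas \ref{CFG:convergence} and \ref{CFG:recurrent:property:algebraic} to single out the correct halting configuration and to certify that it is recurrent for $s'$.

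The main obstacle is exactly this halting step. If one instead plays the sink-$s'$ game to convergence, firing all non-$s'$ vertices until stable, the total on $G$ is preserved but $s'$ typically ends with strictly more than $\outdegree{G}{s'}$ chips; re-anchoring $s'$ to the required value $\outdegree{G}{s'}$ then lowers the total, so the induced map on recurrent configurations fails to preserve the level. Hence the two anchored representatives of equal level for $s$ and for $s'$ need not be firing-equivalent on $G$, and $\Phi$ cannot simply be read off from the firing equivalence. The real work is to define the transfer so that it stops at precisely the configuration carrying $\outdegree{G}{s'}$ chips on $s'$ with all other vertices stable, and to prove that such a configuration exists, is unique, is recurrent for $s'$, and that distinct inputs produce distinct outputs. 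I expect this to be the technical heart of the argument; once $\Phi$ is established as a level-preserving bijection for adjacent sinks, the theorem follows from the connectivity reduction of the second step.
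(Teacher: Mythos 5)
Your reformulation is sound and in fact coincides with the paper's setup: your $\hat c$ is exactly the paper's $\overline{c^{*^{s}}}$, the observation that $\confdegree{G}{s}{c}$ is the total chip count on the sink-free Eulerian graph and hence firing-invariant is correct, and your diagnosis of the obstacle (stabilizing toward $s'$ overloads $s'$, and re-anchoring destroys the sum) is precisely the right one. The problem is that you stop there. Your proposed fix --- halt the firing trajectory from $\hat c$ at the configuration where $s'$ carries exactly $\outdegree{G}{s'}$ chips and every other vertex is stable --- cannot work as stated: the abelian property forces any maximal legal firing avoiding $s'$ to end at $\hat c^{\circ^{s'}}$, whose value at $s'$ generically exceeds $\outdegree{G}{s'}$, and no intermediate configuration on the trajectory is simultaneously anchored at $\outdegree{G}{s'}$ and stable elsewhere. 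Worse, the correct target configuration in $A_{s'}$ of equal total is in general \emph{not reachable from $\hat c$ by legal firings at all}; it is only equivalent to $\hat c$ modulo the Laplacian lattice, as your own remark about firing-equivalence concedes. So there is no ``transfer'' to halt.

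The paper's resolution is genuinely different and is the content you are missing. One adds $i$ extra chips to $s$ \emph{before} stabilizing toward $s'$, and proves (Lemma \ref{bijection:iexistence}) that some $i$ makes the stabilization deposit exactly $\outdegree{G}{s'}+i$ chips on $s'$, so that the $i$ auxiliary chips cancel and the sum is preserved. The existence of such an $i$ is a discrete intermediate-value argument: the function $f(i)=\left(\overline{c^{*^{s}}}^i\right)^{\circ^{s'}}\!\!(s')-\outdegree{G}{s'}-i$ satisfies $f(0)\geq 0$ (Lemma \ref{CFG:rectransformation}), decreases by at most one per step (Lemma \ref{CFG:stabilization of large configuration}), and satisfies $f(N-1)\leq 0$ where $N$ is the order of the sandpile group --- the last point requiring Lemma \ref{CFG:maximum of total number of chips}. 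Injectivity then rests on the symmetry of these swap numbers, $\augnum{s}{\theta(c)}=\augnum{s'}{c}$ (Proposition \ref{swapping swap number}), which is the most technical part of the paper and which you assert without argument as ``the inverse of $\Phi$ is the symmetric transfer.'' Since you explicitly defer all of this as ``the technical heart of the argument,'' the proposal is a correct framing of the problem rather than a proof. (Your reduction to adjacent sinks is harmless but unnecessary; the paper handles arbitrary pairs directly.)
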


The result of Merino L\'opez \cite{Lop97} implies that Theorem \ref{CFG:sink-independence} is true for undirected graphs. An undirected graph $G$ can be considered as an Eulerian digraph by replacing each undirected edge $e$ with two endpoints $v$ and $v'$ by two {\reverse} directed arcs $e'$ and $e''$ satisfying $\tail{e'}=\head{e''}=v$ and $\head{e'}=\tail{e''}=v'$. With this conversion it makes sense to call an Eulerian digraph $G$ \emph{undirected} if for any two vertices $v,v'$ of $G$ we have $\numarcs{G}{v}{v'}=\numarcs{G}{v'}{v}$. The following known result is thus a particular case of Theorem \ref{CFG:sink-independence}, for the class of undirected graphs.

\begin{theo}\cite{Lop97}
Let $\mathcal{C}$ be the set of all recurrent configurations with respect to some sink $s$. If $G$ is an undirected graph (defined as a digraph) then $T_G(1,y)=\underset{c \in \mathcal{C}}{\sum} y^{level(c)}$, where $T_G(x,y)$ is the Tutte polynomial of $G$ and $level(c):=-\frac{|A|}{2}+\outdegree{G}{s}+\underset{v \neq s}{\sum}c(v)$ for any $c \in \mathcal{C}$.
\end{theo}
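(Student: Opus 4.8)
The plan is to prove the identity by induction on the number of edges, showing that the level generating function $P_G(y) := \sum_{c \in \mathcal{C}} y^{level(c)}$ satisfies the same deletion--contraction recurrence as $T_G(1,y)$. Recall that the Tutte polynomial obeys $T_G(1,y) = T_{\contract{G}{e}}(1,y)$ when $e$ is a bridge, $T_G(1,y) = y\,T_{\deletearc{G}{e}}(1,y)$ when $e$ is a loop, and $T_G(1,y) = T_{\contract{G}{e}}(1,y) + T_{\deletearc{G}{e}}(1,y)$ otherwise; the base case is the one-vertex graph, for which $T_G(1,y) = 1$ and the unique empty recurrent configuration has level $0$, so $P_G(y) = 1$ as well. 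At each inductive step I would operate on an edge $e$ incident to the sink $s$, which exists because $G$ is connected with at least two vertices, so that $\contract{G}{e}$ inherits $s$ as its sink by merging the other endpoint of $e$ into $s$.

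The first task is to track how $level$ is renormalized by the two operations. Writing $level(c) = -|E| + \outdegree{G}{s} + \sum_{v \neq s} c(v)$, deleting $e$ lowers both $|E|$ and $\outdegree{G}{s}$ by one, while contracting $e$ changes $|E|$ and the sink degree and can turn a parallel edge into a loop at $s$. Because we only ever contract edges incident to the sink, every loop that arises sits at $s$; such a loop is dynamically inert, as the sink never fires, yet it raises $\outdegree{G}{s}$ by two and $|E|$ by one, hence shifts every level by $+1$ and contributes exactly the Tutte factor $y$. Checking that these shifts, which are precisely what Biggs' normalization $-|A|/2 = -|E|$ is engineered to produce, line up with the exponents in the Tutte recurrence is a bookkeeping computation I would carry out case by case.

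The heart of the argument, and the step I expect to be the main obstacle, is the non-bridge, non-loop case, where I must produce a level-compatible bijection
$$\mathcal{C}(G) \;\longleftrightarrow\; \mathcal{C}(\contract{G}{e}) \,\sqcup\, \mathcal{C}(\deletearc{G}{e}).$$
I would build it through Dhar's burning criterion (Lemma \ref{CFG:Dah}): running the burning algorithm from $c + \beta$ fires every non-sink vertex exactly once, and one can classify each recurrent configuration of $G$ by whether the endpoint $u$ of $e$ is burnt \emph{through} $e$ or through another arc. This dichotomy should match the configurations that ``use'' $e$ with the recurrent configurations of $\contract{G}{e}$ and those that do not with the recurrent configurations of $\deletearc{G}{e}$. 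The delicate point is to verify that the resulting map is a well-defined bijection in both directions --- that stability and the burning test transfer correctly across deletion and contraction --- and that it transports $level$ with exactly the shift computed above.

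As an alternative and a cross-check I would consider the purely bijective route: fix a total order on the edges, use the spanning-tree expansion $T_G(1,y) = \sum_{T} y^{ea(T)}$ summing over spanning trees weighted by external activity, and match recurrent configurations with spanning trees by recording, during the burning algorithm, the arc that first carries a chip into each newly burnt vertex. Here the number of recurrent configurations already equals the number of spanning trees by Lemma \ref{CFG:recurrent:property:algebraic}, so the content again lies entirely in proving that this bijection sends $level(c)$ to $ea(T)$; establishing this statistic-preserving property is what pins the generating function down to $T_G(1,y)$ rather than merely a polynomial of the right degree and cardinality.
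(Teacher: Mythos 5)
The paper does not actually prove this theorem: it is quoted from \cite{Lop97} as the known undirected special case of Theorem \ref{CFG:sink-independence}. The relevant comparison is therefore with Merino L\'opez's argument, which the paper reproduces in generalized form in Section \ref{tutte-like section}: Lemma \ref{recurrent:contraction}, Lemma \ref{generatingfunction:deletion} and Propositions \ref{loop recursion}--\ref{neither loop nor bridge recursion} are precisely the deletion--contraction recurrences for the level generating function, and specializing them to undirected graphs yields the proof you are sketching. Your overall plan --- induction on edges incident to the sink, matching the three recurrences for $T_G(1,y)$, with the normalization $-|A|/2$ absorbing the level shifts --- is the right one and is essentially the standard route; your bookkeeping for loops and for the bridge and deletion cases is consistent with what the paper does.

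The one step that would fail as written is your dichotomy in the non-bridge, non-loop case. Classifying a recurrent configuration by whether the endpoint $u$ of $e$ is ``burnt through $e$'' is not well defined: in the burning algorithm a vertex becomes firable once it has accumulated enough chips from the set of already-burnt neighbours, and no individual arc is canonically singled out as the one that burns it (making such a choice canonical is exactly the extra work of the Cori--Le Borgne tree bijection, which you only invoke as a cross-check). The dichotomy that works, and the one used both by Merino L\'opez and by this paper, is on the chip count at $u$: configurations with $c(u)=\outdegree{G}{u}-1$ restrict to recurrent configurations of $\contract{G}{e}$ (Lemma \ref{recurrent:contraction}), while configurations with $c(u)<\outdegree{G}{u}-1$ are exactly the recurrent configurations of the deletion (Lemma \ref{generatingfunction:deletion}); this is the partition $\mathcal{C}=\mathcal{C}_1\cup\mathcal{C}_2$ of Proposition \ref{neither loop nor bridge recursion}, and with it the level computation is immediate. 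With that substitution your induction closes; as submitted, however, the key bijection is only announced rather than proved, and the alternative spanning-tree route defers all the content to an unproved statistic-preserving property (that $level(c)$ maps to external activity), so the proposal is a correct plan rather than a complete proof.
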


In the rest of this section we work with an Eulerian digraph $G=(V,A)$. In order to prove Theorem \ref{CFG:sink-independence}, we consider the following natural approach. Let $s_1$ and $s_2$ be two distinct vertices of $G$. We denote by $\mathcal{C}_1$ and $\mathcal{C}_2$ the sets of all recurrent configurations with respect to sink $s_1$ and $s_2$, respectively. We are going to construct a bijection $\theta$ from $\mathcal{C}_1$ to $\mathcal{C}_2$ such that $\confdegree{G}{s_1}{c}=\confdegree{G}{s_2}{\theta(c)}$ for every $c \in \mathcal{C}_1$. Note that it follows from \cite{HLMPPW08} that $|\mathcal{C}_1|=|\mathcal{C}_2|$. In order to work on the CFG with respect to different sinks, we introduce some clear notations.

\begin{notation}
  For a digraph $G=(V,E)$, let $c$ denote a configuration that assigns a number of chips to every vertex, {\em i.e.}, a map $c: V \to \mathbb N$. In order to discard the number of chips stored in the sink $s$, we introduce the notation $c^{*^{s}}$, which is the map $c$ restricted to the domain $V \backslash \set{s}$. We denote $c^{\circ^{s}}$ the configuration obtained by stabilizing with respect to the sink $s$. The operator ${\circ^{s}}$ can be applied to $c$ or $c^{*^{s}}$ and gives respectively a configuration $c^{\circ^{s}} : V \to \mathbb N$ or $c^{\circ^{s}*^{s}} : V\backslash \set{s} \to \mathbb N$ (note that $c^{\circ^{s}*^{s}}=(c^{*^{s}})^{\circ^{s}}$ since two operators $*^{s}$ and $\circ^{s}$ commute).
\end{notation}

It follows from Lemma \ref{CFG:convergence} that the configurations $c^{\circ^{s}}$ and $c^{\circ^{s}*^s}$ are well-defined and unique. See Figure \ref{fig:im0708} for an illustration. A basic trick will be to stabilize according to a sink $s_1$, choose a number of chips to assign to $s_1$, and then stabilize according to another sink $s_2$. Let us already state notations for this purpose.

\begin{notation}
  Basically, we will add $\outdegree{G}{s}$ chips to the sink $s$, so that it becomes firable. We therefore define $\overline{c^{*^{s}}} : V \to \mathbb N$ as the configuration such that $\overline{c^{*^{s}}}(v)=c^{*^s}(v)$ if $v \neq s$ and $\overline{c^{*^{s}}}(s)=\outdegree{G}{s}$. Note that we will always apply this operator to a configuration on which the sink is specified.
  
  Similarly, $\overline{c^{*^{s}}}^i : V \to \mathbb N$ denotes the configuration where we put $\outdegree{G}{s}$ plus $i$ extra chips on $s$, {\em i.e.} such that $\overline{c^{*^{s}}}^i(v)=c^{*^s}(v)$ if $v \neq s$ and $\overline{c^{*^{s}}}^i(s)=\outdegree{G}{s}+i$. Obviously, $\overline{c^{*^{s}}}=\overline{c^{*^{s}}}^0$.
  
  The sum of chips we are interested in may be applied to a configuration $c$ or $c^{*^s}$, and is defined as $\confdegree{G}{s}{c}=\sum \limits_{v \in V} \overline{c^{*^{s}}}(v)$.
\end{notation}

\begin{figure}
\centering
\subfloat[A configuration $c$ of $G$]{\label{fig:im7}\includegraphics[bb=0 0 304 136,width=2.05in,height=0.912in,keepaspectratio]{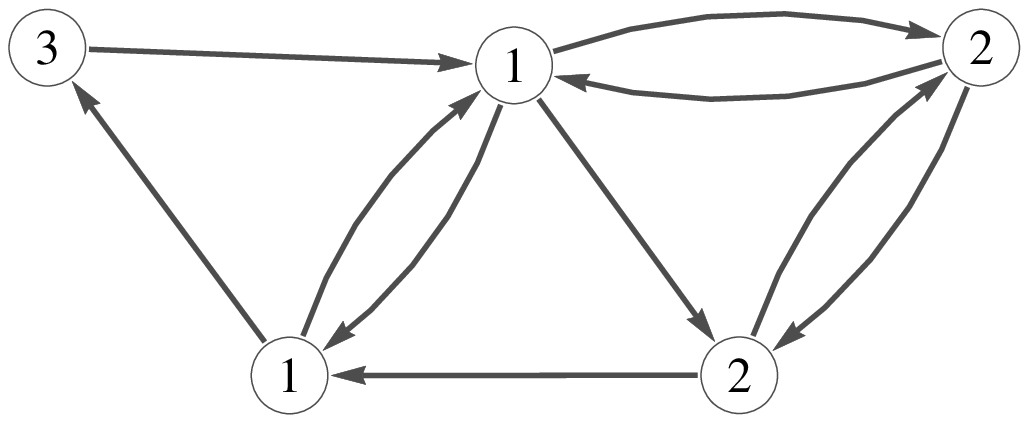}}\quad \quad
\subfloat[$\stab{G}{v_4}{c}$]{\label{fig:im8}\includegraphics[bb=0 0 296 133,width=2.05in,height=0.917in,keepaspectratio]{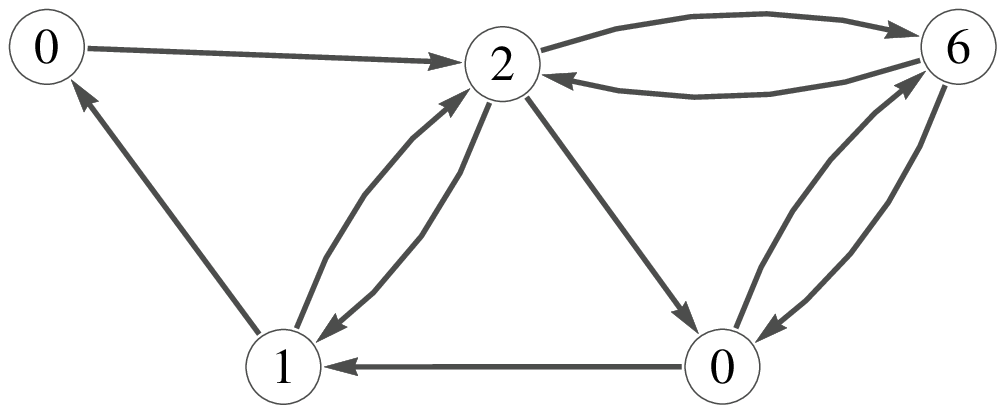}}
\caption{Chip-unvanished stabilization}
\label{fig:im0708}
\end{figure}

The map $\theta$ is based on the following property, which describes the construction of a configuration belonging to $\mathcal C_2$ from a configuration of $\mathcal C_1$. Note that the configurations of $\mathcal C_1$ (resp. $\mathcal C_2$) have type $V\backslash \set{s_1} \to \mathbb N$ (resp. $V \backslash \set{s_2} \to \mathbb N$) and are therefore denoted $c^{*^{s_1}}$ (resp. $c^{*^{s_2}}$). The procedure is straightforward: given a stable configuration of $\mathcal C_1$, we add $\outdegree{G}{s}$ chips to the sink and then stabilize according to the sink $s_2$. The resulting configuration, restricted to $V \backslash \set{s_2}$, belongs to $\mathcal C_2$.

\begin{lem}
\label{CFG:rectransformation}
Let $c^{*^{s_1}}\!\! \in \mathcal{C}_1$, then $\left(\overline{c^{*^{s_1}}}\right)^{\circ^{s_2}*^{s_2}}\!\! \in \mathcal{C}_2$. Moreover, $\left(\overline{c^{*^{s_1}}}\right)^{\circ^{s_2}}\!\!\!\!(s_2) \geq \outdegree{G}{s_2}$.
\end{lem}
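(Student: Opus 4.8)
The plan is to work throughout in the chip-unvanished picture of the full Eulerian digraph $G$, writing $d := \left(\overline{c^{*^{s_1}}}\right)^{\circ^{s_2}}$ for the configuration obtained by stabilizing $\overline{c^{*^{s_1}}}$ with respect to the sink $s_2$, so that the two assertions become ``$d^{*^{s_2}}$ is recurrent for $s_2$'' and ``$d(s_2) \geq \outdegree{G}{s_2}$''. The single tool that unlocks both is the following reformulation of the hypothesis $c^{*^{s_1}} \in \mathcal{C}_1$. Writing $\beta_1$ (resp. $\beta_2$) for the configuration $\beta$ of Lemma \ref{CFG:Dah} attached to the sink $s_1$ (resp. $s_2$), Dhar's criterion says that stabilizing $c + \beta_1$ with respect to $s_1$ fires every non-$s_1$ vertex exactly once. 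Re-read inside $G$, this means that from $\overline{c^{*^{s_1}}}$ one may first fire $s_1$ (legal, since $\overline{c^{*^{s_1}}}(s_1) = \outdegree{G}{s_1}$) and then fire each remaining vertex exactly once, in some legal order $\sigma$; because $G$ is Eulerian, firing every vertex exactly once has zero net effect, so $\sigma$ is a legal closed ``all-ones'' firing sequence returning $\overline{c^{*^{s_1}}}$ to itself. I will use $\sigma$ as a bridge between the two sinks.

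First I would prove the ``Moreover'' part. In the sequence $\sigma$ the vertex $s_2$ is fired at some step, and at that instant it is by definition firable, hence holds at least $\outdegree{G}{s_2}$ chips. The firings of $\sigma$ that precede this step involve only vertices $\neq s_2$ and are legal from $\overline{c^{*^{s_1}}}$, so they also form a legal partial firing in the game with sink $s_2$; after performing exactly them, $s_2$ carries the same number of chips it had at its firing step in $\sigma$, namely at least $\outdegree{G}{s_2}$. Completing the stabilization to reach $d$ can only add chips to $s_2$, which never fires, so $d(s_2) \geq \outdegree{G}{s_2}$. This step is short and I expect no difficulty.

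For the first claim I would verify recurrence of $d^{*^{s_2}}$ through Dhar's criterion, this time for $s_2$: it suffices to show that stabilizing $d^{*^{s_2}} + \beta_2$ with respect to $s_2$ fires every vertex exactly once, equivalently (since $d^{*^{s_2}}$ is stable by construction) that every non-$s_2$ vertex fires \emph{at least} once, the upper bound of one firing being automatic. Indeed the ``at most once'' half is exactly the Eulerian argument already carried out in the proof of Lemma \ref{CFG:maximum of total number of chips}: adding $\beta_2$ to the stable configuration $d^{*^{s_2}}$ and stabilizing fires each vertex at most once. So everything reduces to showing the burning reaches every vertex. Here I would transport $\sigma$ from the sink $s_1$ to the sink $s_2$: since $d$ is reached from $\overline{c^{*^{s_1}}}$ by a legal $s_2$-avoiding firing, confluence (Lemma \ref{CFG:convergence}) lets me identify the stabilization producing $d$ with the run of $\sigma$ up to the firing of $s_2$, while the cyclic rotation of $\sigma$ that begins at $s_2$ is again a legal all-ones sequence; extracting from it an order in which every vertex is burnt out of $d^{*^{s_2}}$ is what would certify recurrence.

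The main obstacle is precisely this last transport. The rotation of $\sigma$ is legal from the intermediate configuration reached just before $s_2$ fires, not from $d^{*^{s_2}}$ itself, and the two differ by the extra $s_2$-avoiding firings that complete the stabilization to $d$; turning ``the all-ones sequence is legal from the intermediate configuration'' into ``the burning of the stable configuration $d^{*^{s_2}}$ visits every vertex'' is the delicate combinatorial core, and is where the recurrence of $c$ for $s_1$ must really be spent. A robust alternative I would keep in reserve is the forbidden-region form of Dhar's criterion: if the burning of $d^{*^{s_2}}$ missed a nonempty set $Z \subseteq V \backslash \set{s_2}$, then Eulerianness (using $\indegree{G}{v}=\outdegree{G}{v}$) forces $\sum_{v \in Z} d^{*^{s_2}}(v) < a_Z$, where $a_Z$ is the number of arcs with both endpoints in $Z$; I would then contradict this by bounding $\sum_{v \in Z} d^{*^{s_2}}(v)$ from below via chip conservation during the $s_2$-stabilization together with the analogous region inequalities inherited from the recurrence of $c$ for the sink $s_1$.
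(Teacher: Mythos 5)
Your setup and your proof of the ``Moreover'' part coincide with the paper's: both extract from Lemma \ref{CFG:Dah} the closed all-ones firing sequence $\sigma$ of $\overline{c^{*^{s_1}}}$ beginning with $s_1$, observe that $s_2$ holds at least $\outdegree{G}{s_2}$ chips at the moment it fires in $\sigma$, and note that the $s_2$-stabilization, being reachable without firing $s_2$, can only increase the chip content of $s_2$ from there. That half is fine.

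For the first claim, however, there is a genuine gap, and you have located it yourself: you try to apply Dhar's criterion directly to the \emph{stabilized} configuration $\left(\overline{c^{*^{s_1}}}\right)^{\circ^{s_2}*^{s_2}}$, which forces you to show that its burning sequence reaches every vertex, while the cyclic rotation of $\sigma$ is legal only from the intermediate configuration $d$ reached just before $s_2$ fires in $\sigma$ --- not from the stabilized one. You call this transport ``the delicate combinatorial core'' and do not carry it out; the reserve plan via forbidden regions is likewise only a sketch, and its key step (that the region inequalities $\sum_{v\in Z}x(v)\geq a_Z$ survive stabilization, during which chips do leave $Z$) is precisely the kind of statement that itself needs proof. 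The paper avoids the transport altogether: it stays with the intermediate configuration $d$ and uses the rotation of $\sigma$ only to conclude that $d^{*^{s_2}}+\beta_2 \overset{*}{\to} d^{*^{s_2}}$ (your $\beta_2$), hence $d^{*^{s_2}}+n\beta_2 \overset{*}{\to} d^{*^{s_2}}$ for every $n$; taking $n$ large enough, one can fire on to a configuration holding at least $\outdegree{G}{v}-1$ chips on every $v\neq s_2$, whose stabilization is recurrent by Lemma \ref{CFG:stabilization of large configuration}, and by confluence (Lemma \ref{CFG:convergence}) together with the unicity of the recurrent configuration in an equivalence class, that stabilization is exactly $d^{\circ^{s_2}*^{s_2}}=\left(\overline{c^{*^{s_1}}}\right)^{\circ^{s_2}*^{s_2}}$. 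If you replace your ``burning must visit every vertex'' step by this absorption argument --- for which you already have the needed ingredient, since your rotation of $\sigma$ gives $d+\beta_2\overset{*}{\to}d$ at the intermediate configuration --- your proof closes and becomes the paper's.
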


Since the concept of this lemma is at the heart of the construction of the map $\theta$, we give an illustration of the claim before going into the details of the proof. We consider the Eulerian digraph given in Figure \ref{fig:im01020304} with $s_1=s$ and $s_2=v_3$. Figure \ref{fig:im9} shows a recurrent configuration with respect to sink $s$. The configuration $\overline{c^{*^s}}$ is given in Figure \ref{fig:im10}. Figure \ref{fig:im11} and Figure \ref{fig:im12} show $\left(\overline{c^{*^{s}}}\right)^{\circ^{v_3}}$ and its restriction to $V\backslash\set{v_3}$. Using the Burning algorithm, one easily checks that the configuration in Figure \ref{fig:im12} is indeed recurrent with respect to sink $v_3$.

\begin{figure}
\centering
\subfloat[A recurrent configuration $c^{*^s}$]{\label{fig:im9}\includegraphics[bb=0 0 296 132,width=1.88in,height=0.836in,keepaspectratio]{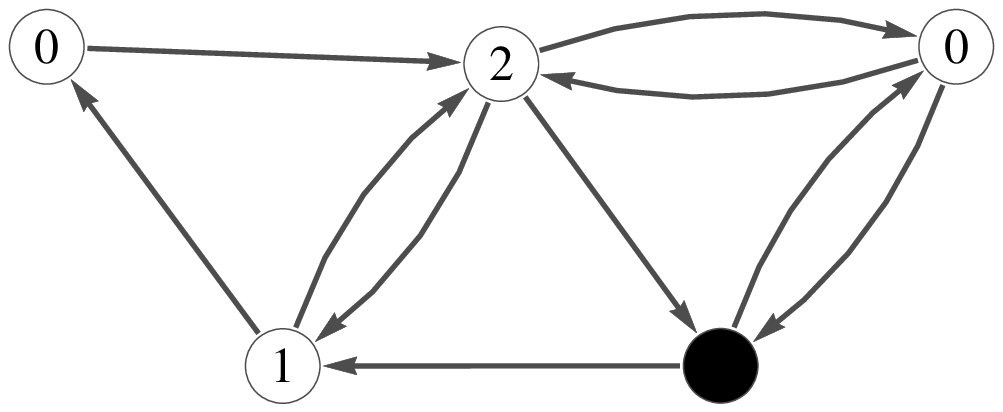}}\quad \quad
\subfloat[Configuration $\overline{c^{*^s}}$]{\label{fig:im10}\includegraphics[bb=0 0 277 124,width=1.88in,height=0.841in,keepaspectratio]{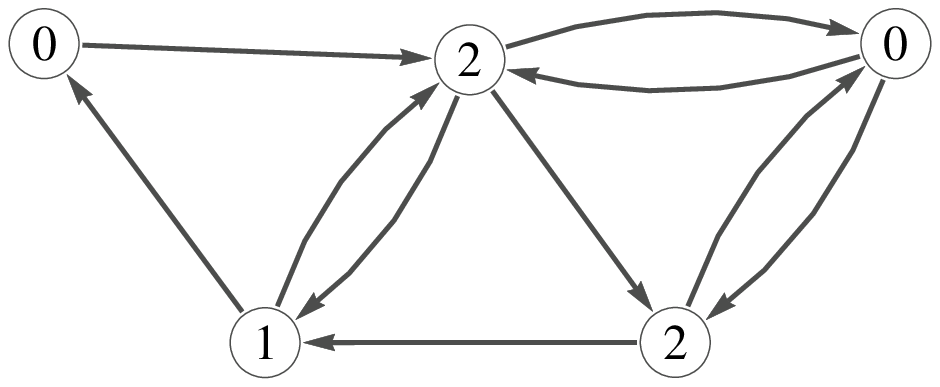}}\\
\subfloat[$\left(\overline{c^{*^{s}}}\right)^{\circ^{v_3}}$]{\label{fig:im11}\includegraphics[bb=0 0 286 128,width=1.88in,height=0.837in,keepaspectratio]{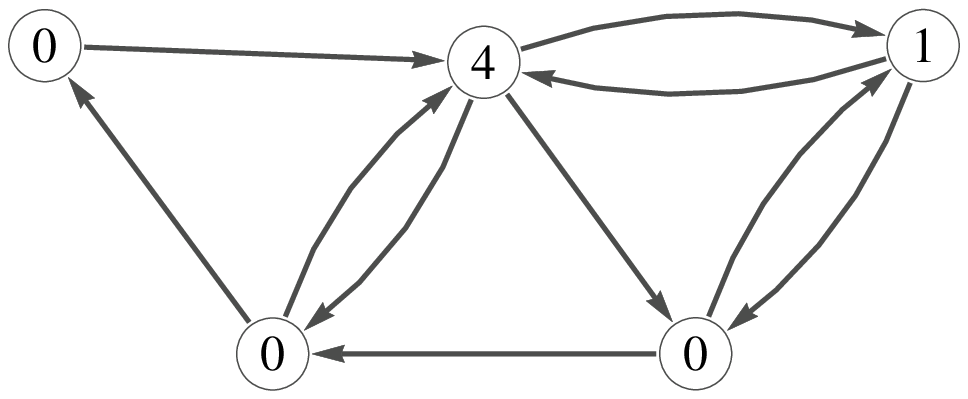}}\quad \quad
\subfloat[Recurrent configuration $\left(\overline{c^{*^{s}}}\right)^{\circ^{v_3}*^{v_3}}$]{\label{fig:im12}\includegraphics[bb=0 0 298 133,width=1.88in,height=0.837in,keepaspectratio]{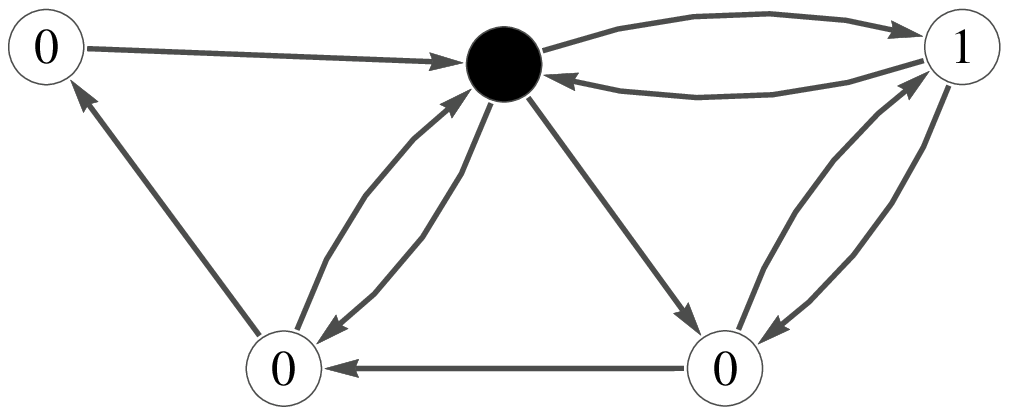}}
\caption{Change of sink}
\label{fig:im09101112}
\end{figure}

\begin{proof}[Proof of Lemma \ref{CFG:rectransformation}] We will once again use Lemma \ref{CFG:Dah} (the Burning algorithm), which provides a firing sequence associated to a recurrent configuration of $\mathcal C_1$, that we will manipulate to built a firing sequence associated to a recurrent configuration of $\mathcal C_2$. In $\overline{c^{*^{s_1}}}$, only $s_1$ is firable, and after firing it, we will use the firing sequence leading back to $c^{*^{s_1}}$, provided by Lemma \ref{CFG:Dah}.

Let $\beta_{s_1}^{*^{s_1}}:V\backslash\set{s_1}\to \mathbb{N}$ (resp. $\beta_{s_2}^{*^{s_2}}:V\backslash\set{s_2}\to \mathbb{N}$) be given by $\beta_{s_1}^{*^{s_1}}(v)$ (resp. $\beta_{s_2}^{*^{s_2}}(v)$) is equal to $\numarcs{G}{s_1}{v}$ (resp. $\numarcs{G}{s_2}{v}$). Let $c'$ be such that $\overline{c^{*^{s_1}}}\overset{s_1}{\to} c'$. We have ${c'}^{*^{s_1}}=c^{*^{s_1}}+\beta_{s_1}^{*^{s_1}}$, and can therefore apply\linebreak Lemma \ref{CFG:Dah} (since $c^{*^{s_1}}$ is recurrent with respect to sink $s_1$), providing a firing sequence $\mathfrak{f}=(v_1,v_2,\cdots,v_{|V|})$ of $\overline{c^{*^{s_1}}}$ such that $v_1=s_1$ and each vertex of $G$ occurs exactly once in this sequence. Let $k$ be such that $s_2=v_k$ and $d$ be the configuration reached after vertices $(v_1,v_2,\dots,v_{k-1})$ have been fired. Vertex $s_2$ is firable in $d$, thus $d(s_2) \geq \outdegree{G}{s_2}$. Since the game is convergent, and this intermediate configuration $d$ is reachable from $\overline{c^{*^{s_1}}}$ without firing $s_2$, when we stabilize $\overline{c^{*^{s_1}}}$ with respect to sink $s_2$ we end up with at least as many chips in $s_2$ as in $d$, and the second part of the lemma follows.

We now prove that $\left(\overline{c^{*^{s_1}}}\right)^{\circ^{s_2}*^{s_2}}\!\!=d^{\circ^{s_2} *^{s_2}}$ is a recurrent configuration of $\mathcal C_2$, by constructing a firing sequence in order to apply Lemma \ref{CFG:Dah}. Since $\overline{c^{*^{s_1}}} \overset{f}{\to} \overline{c^{*^{s_1}}}$, the sequence $\mathfrak{f}':=(s_2,v_{k+1},v_{k+2},\cdots,v_{|V|},v_1,v_2,\cdots,v_{k-1})$ is a firing sequence of $d$. We now consider the Chip-firing game with respect to sink $s_2$ (that is, on $\deleteoutarcs{G}{s_2}$), and the configuration $d^{*^{s_2}}$. Let $d'$ be such that $d\overset{s_2}{\to} d'$. We have $d'^{*^{s_2}}=d^{*^{s_2}}+\beta_{s_2}^{*^{s_2}}$, and the rest of the firing sequence implies that $d^{*^{s_2}}\!\!+\beta_{s_2}^{*^{s_2}} \overset{*}{\to} d^{*^{s_2}}$, therefore $d^{*^{s_2}}\!\!+n\beta_{s_2}^{*^{s_2}} \overset{*}{\to} d^{*^{s_2}}$ for any $n \in \mathbb N$. The recurrence of $\left(\overline{c^{*^{s_1}}}\right)^{\circ^{s_2}*^{s_2}}$ with respect to $s_2$ follows from the argument presented in the proof of Lemma \ref{CFG:maximum of total number of chips}.
\end{proof}

Lemma \ref{CFG:rectransformation} naturally suggests a bijection from $\mathcal{C}_1$ to $\mathcal{C}_2$ that is defined by $c^{*^{s_1}} \mapsto \left(\overline{c^{*^{s_1}}}\right)^{\circ^{s_2}*^{s_2}}$. However, this does not give the intended bijection since it does not necessarily preserves the $\textbf{\texttt{sum}}$ of chips
, as shown on Figure \ref{fig:im09101112}. The generalization of $\overline{c^{*^{s_1}}}$, denoted $\overline{c^{*^{s_1}}}^i$ and corresponding to adding some extra chips to $s_1$, is more flexible and can be used to improve the above map so that it preserves the $\textbf{\texttt{sum}}$ of chips. 
That is what we are going to present now.

The next lemma follows from the second item of Lemma \ref{CFG:stabilization of large configuration} and similar arguments as used in the proof of Lemma \ref{CFG:rectransformation}.

\begin{lem}
\label{CFG:generaltransformation}
For all $c^{*^{s_1}}\!\! \in \mathcal{C}_1$ and any $i \in \mathbb N$, we have $\left(\overline{c^{*^{s_1}}}^i\right)^{\circ^{s_2}*^{s_2}}\!\! \in \mathcal{C}_2$. Moreover, $\left(\overline{c^{*^{s_1}}}^i\right)^{\circ^{s_2}}\!\!\!\!(s_2)\geq \outdegree{G}{s_2}$.
\end{lem}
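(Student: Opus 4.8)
The plan is to reduce the statement to the already-established case $i=0$, namely Lemma~\ref{CFG:rectransformation}, by a monotonicity argument. Write $c=\overline{c^{*^{s_1}}}=\overline{c^{*^{s_1}}}^0$ and $c'=\overline{c^{*^{s_1}}}^i$, both viewed as configurations on $V$. These two configurations agree on every vertex except $s_1$, where $c'$ carries $i$ extra chips, so $c(v)\leq c'(v)$ for all $v\in V$. Since throughout the argument we stabilize and compare with respect to the sink $s_2$, and $s_2\neq s_1$, the inequality $c(v)\leq c'(v)$ holds in particular for every $v\neq s_2$. This is exactly the hypothesis needed to apply the second item of Lemma~\ref{CFG:stabilization of large configuration} with the sink taken to be $s_2$.

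For the membership $\left(\overline{c^{*^{s_1}}}^i\right)^{\circ^{s_2}*^{s_2}}\in\mathcal C_2$, I would first invoke Lemma~\ref{CFG:rectransformation} to know that $\left(\overline{c^{*^{s_1}}}\right)^{\circ^{s_2}*^{s_2}}=c^{\circ^{s_2}*^{s_2}}$ is recurrent with respect to $s_2$. The recurrence-preservation clause of the second item of Lemma~\ref{CFG:stabilization of large configuration}, applied to the pair $c\leq c'$ relative to sink $s_2$, then gives that $(c')^{\circ^{s_2}*^{s_2}}=\left(\overline{c^{*^{s_1}}}^i\right)^{\circ^{s_2}*^{s_2}}$ is recurrent as well, which is the first assertion.

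For the bound $\left(\overline{c^{*^{s_1}}}^i\right)^{\circ^{s_2}}(s_2)\geq\outdegree{G}{s_2}$, I would use the quantitative inequality in the same item of Lemma~\ref{CFG:stabilization of large configuration}: the number of chips absorbed by the sink during stabilization is nondecreasing in the starting configuration, so the inflow into $s_2$ for $c'$ is at least the inflow for $c$. Because chip-unvanished stabilization leaves in $s_2$ its initial content plus this inflow, and because $c(s_2)=c'(s_2)$ (the two configurations differ only at $s_1$), we obtain $\left(\overline{c^{*^{s_1}}}^i\right)^{\circ^{s_2}}(s_2)=(c')^{\circ^{s_2}}(s_2)\geq c^{\circ^{s_2}}(s_2)=\left(\overline{c^{*^{s_1}}}\right)^{\circ^{s_2}}(s_2)$, and the last quantity is at least $\outdegree{G}{s_2}$ by the second part of Lemma~\ref{CFG:rectransformation}.

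The principal obstacle is bookkeeping rather than anything deep: Lemma~\ref{CFG:stabilization of large configuration} is phrased for configurations relative to one fixed sink, so each comparison, stabilization, and inflow count in the argument must be read with respect to $s_2$, even though the configurations originate from a recurrent configuration relative to $s_1$. The two facts that make everything fit are that $c$ and $c'$ differ only at the vertex $s_1$, which is distinct from $s_2$; this both legitimizes checking $c(v)\leq c'(v)$ only for $v\neq s_2$ and supplies the identity $c(s_2)=c'(s_2)$ used in the second part. Once this is handled carefully, both claims follow immediately from the $i=0$ case and the monotonicity of Lemma~\ref{CFG:stabilization of large configuration}.
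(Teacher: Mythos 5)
Your proof is correct and follows essentially the route the paper itself sketches (the paper only says the lemma ``follows from the second item of Lemma~\ref{CFG:stabilization of large configuration} and similar arguments as used in the proof of Lemma~\ref{CFG:rectransformation}''). Your reduction to the $i=0$ case --- invoking the conclusion of Lemma~\ref{CFG:rectransformation} as a black box and then using both the recurrence-preservation clause and the quantitative chip-loss inequality of Lemma~\ref{CFG:stabilization of large configuration} with respect to sink $s_2$, together with the observation that the two configurations agree at $s_2$ --- is a clean and complete realization of that sketch.
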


Lemma \ref{CFG:generaltransformation} produces a map from $\mathcal{C}_1 \times \mathbb{N}$ to $\mathcal{C}_2 \times \mathbb{N}$ defined by
$$\left(c^{*^{s_1}}\!\!,\,i\right) \mapsto \left(\left(\overline{c^{*^{s_1}}}^i\right)^{\circ^{s_2}*^{s_2}}\!\!,\,\left(\overline{c^{*^{s_1}}}^i\right)^{\circ^{s_2}}\!\!\!\!(s_2)-\outdegree{G}{s_2}\right).$$

This map is injective from the following result:

\begin{lem}
\label{inversibility}
Let $c^{*^{s_1}} \in \mathcal{C}_1$ and $i \in \mathbb{N}$, then $\left( \left( \overline{c^{*^{s_1}}}^i \right)^{\circ^{s_2}} \right)^{\circ^{s_1}}\!\!=\overline{c^{*^{s_1}}}^i$.
\end{lem}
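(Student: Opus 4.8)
The plan is to work entirely in the ``chip-unvanished'' picture, viewing every configuration as a map $V \to \mathbb N$ and recording in the sink all the chips that would otherwise vanish; the operators $\circ^{s_1}$ and $\circ^{s_2}$ then conserve the total number of chips, since the only vertex forbidden to fire simply accumulates what it receives. Write $a := \overline{c^{*^{s_1}}}^i$, $b := a^{\circ^{s_2}}$ and $e := b^{\circ^{s_1}}$; the goal is $e = a$. Observe first that $a$ is stable with respect to $s_1$ and that $a^{*^{s_1}} = c^{*^{s_1}}$ is recurrent with respect to $s_1$. For each vertex $v$ let $r_v:V\to\mathbb Z$ be the vector added when $v$ fires, so that $r_v(v)=-\outdegree{G}{v}$ and $r_v(w)=\numarcs{G}{v}{w}$ for $w\neq v$. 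Since $G$ is Eulerian, $\indegree{G}{w}=\outdegree{G}{w}$ for every $w$, whence $\sum_{v\in V} r_v = 0$.

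The first key step uses this relation to make the firing lattice sink-independent. Because $\sum_{v\in V} r_v=0$, we have $r_s=-\sum_{v\neq s} r_v$, so the lattice generated by $\set{r_v : v\neq s}$ does not depend on the chosen sink $s$. Now $b$ is obtained from $a$ by firing vertices $v\neq s_2$, say $x_v$ times each, so $b-a=\sum_{v\neq s_2} x_v\, r_v$ lies in this common lattice; rewriting this combination over the generators $\set{r_v:v\neq s_1}$ and restricting to $V\backslash\set{s_1}$ shows that $a^{*^{s_1}}$ and $b^{*^{s_1}}$ are equivalent for the sink $s_1$. As stabilization keeps a configuration inside its equivalence class, $e^{*^{s_1}}=b^{\circ^{s_1}*^{s_1}}$ is equivalent to $b^{*^{s_1}}$, hence to $a^{*^{s_1}}=c^{*^{s_1}}$, with respect to $s_1$.

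It remains to identify $e^{*^{s_1}}$ inside that class, and here lies \textbf{the main obstacle}: an equivalence class may contain several stable configurations, as Figure~\ref{fig:conf12} shows, so equivalence and stability alone do not force $e^{*^{s_1}}=c^{*^{s_1}}$, and one must prove that $e^{*^{s_1}}$ is in fact \emph{recurrent}. For this I would invoke Lemma~\ref{CFG:rectransformation}, which gives $b^{*^{s_2}}\in\mathcal C_2$ and $b(s_2)\geq\outdegree{G}{s_2}$, so that $b=\overline{b^{*^{s_2}}}^{\,j}$ with $j=b(s_2)-\outdegree{G}{s_2}\in\mathbb N$. Applying Lemma~\ref{CFG:generaltransformation} with the roles of $s_1$ and $s_2$ exchanged, to the recurrent configuration $b^{*^{s_2}}$ and the integer $j$, yields $e^{*^{s_1}}=(\overline{b^{*^{s_2}}}^{\,j})^{\circ^{s_1}*^{s_1}}\in\mathcal C_1$. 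Since $c^{*^{s_1}}$ and $e^{*^{s_1}}$ are two recurrent configurations for $s_1$ lying in the same class, the uniqueness part of Lemma~\ref{CFG:recurrent:property:algebraic} gives $e^{*^{s_1}}=c^{*^{s_1}}=a^{*^{s_1}}$.

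Finally I would close with chip conservation. The operators $\circ^{s_2}$ and $\circ^{s_1}$ preserve the total number of chips, so $\sum_{v\in V} e(v)=\sum_{v\in V} b(v)=\sum_{v\in V} a(v)$. As $e$ and $a$ already agree on every vertex $v\neq s_1$, they must also agree at $s_1$, hence $e=a=\overline{c^{*^{s_1}}}^i$, which is exactly the claim. The only genuinely delicate points are the sink-independence of the firing lattice, provided by the Eulerian identity $\sum_{v\in V} r_v=0$, and the upgrade from ``stable'' to ``recurrent'' via the symmetric use of Lemma~\ref{CFG:generaltransformation}; everything else is bookkeeping.
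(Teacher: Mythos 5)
Your proposal is correct and follows essentially the same route as the paper's proof: apply Lemma \ref{CFG:generaltransformation} with the sinks exchanged to show the doubly-stabilized configuration restricted to $V\backslash\set{s_1}$ is recurrent, use the Eulerian property to place it in the same equivalence class as $c^{*^{s_1}}$, invoke uniqueness of the recurrent representative (Lemma \ref{CFG:recurrent:property:algebraic}), and recover the chip count at $s_1$ by conservation. The only quibble is that your first invocation should cite Lemma \ref{CFG:generaltransformation} rather than Lemma \ref{CFG:rectransformation} when $i>0$; your explicit lattice computation via $\sum_{v}r_v=0$ is a welcome elaboration of the paper's one-line appeal to the Eulerian hypothesis.
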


\begin{proof}
For convenience, let $d$ denote $\left( \overline{c^{*^{s_1}}}^i \right)^{\circ^{s_2}}$. It follows from Lemma \ref{CFG:generaltransformation} that $d$ restricted to $V \backslash \set{s_2}$ belongs to $\mathcal{C}_2$ and $d(s_2) \geq \outdegree{G}{s_2}$. As a consequence, some firings happen when stabilizing $d$ according to the sink $s_1$. Let us prove that this process leads to $d^{\circ^{s_1}}=\overline{c^{*^{s_1}}}^i$. From Lemma \ref{CFG:generaltransformation} (with the application to $\mathcal C_2$), we have $d^{\circ^{s_1}*^{s_1}}\!\! \in \mathcal C_1$. Since $G$ is Eulerian, the configurations $d^{*^{s_1}}$ and $c^{*^{s_1}}$ belong to the same equivalence class, so do $d^{\circ^{s_1}*^{s_1}}$ and $c^{*^{s_1}}$. Both are recurrent, hence from Lemma \ref{CFG:recurrent:property:algebraic} they are equal. Finally, $d^{\circ^{s_1}}$ and $\overline{c^{*^{s_1}}}^i$ obviously contain the same total number of chips, and are equal on the vertices different from $s_1$, consequently they also contain the same number of chips on $s_1$.
\end{proof}

The aim is now to find, for every recurrent configuration $c^{*^{s_1}}\!\! \in \mathcal C_1$, the good $i$ so as to get a bijection from $\mathcal C_1$ to $\mathcal C_2$ that preserves the $\textbf{\texttt{sum}}$ of chips. We first concentrate on the $\textbf{\texttt{sum}}$ conservation: if one wants to have
$$\confdegree{G}{s_1}{c^{*^{s_1}}} = \textbf{\texttt{sum}}\,_{G,s_2}\left(\left(\overline{c^{*^{s_1}}}^i\right)^{\circ^{s_2}*^{s_2}}\right),$$
then the number $i$ must be chosen so that $\left(\overline{c^{*^{s_1}}}^i\right)^{\circ^{s_2}}\!\!\!\!(s_2)=\outdegree{G}{s_2}+i$, because the $i$ extra chips are not counted in both $\textbf{\texttt{sum}}$ in this case. The following shows that such an $i$ always exists.

\begin{lem}
\label{bijection:iexistence}
For every $c^{*^{s_1}}\!\! \in \mathcal{C}_1$ there exists $i\in \mathbb{N}$ such that $\left(\overline{c^{*^{s_1}}}^i\right)^{\circ^{s_2}}\!\!\!\!(s_2)=\outdegree{G}{s_2}+i$.
\end{lem}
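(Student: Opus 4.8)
The plan is to reduce the statement to a discrete intermediate value argument for the one‑variable function $f(i) := \left(\overline{c^{*^{s_1}}}^i\right)^{\circ^{s_2}}(s_2)$ and to track the quantity $\phi(i) := f(i) - i$, so that the goal becomes finding $i$ with $\phi(i) = \outdegree{G}{s_2}$. Two facts come essentially for free. First, $\phi(0) = f(0) \geq \outdegree{G}{s_2}$ is exactly the ``Moreover'' part of Lemma \ref{CFG:generaltransformation} (equivalently Lemma \ref{CFG:rectransformation}) at $i=0$. Second, since $\overline{c^{*^{s_1}}}^{i+1}$ is obtained from $\overline{c^{*^{s_1}}}^{i}$ by adding a single chip at $s_1$, monotonicity of stabilization (the second item of Lemma \ref{CFG:stabilization of large configuration}, read in the chip‑unvanished model where the chips reaching $s_2$ are exactly the chips lost to the sink) gives $f(i+1) \geq f(i)$. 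Hence $\phi(i+1) \geq \phi(i) - 1$: the function $\phi$ can drop by at most $1$ at each step. It then suffices to produce a single index where $\phi$ has come down to (or below) $\outdegree{G}{s_2}$, because the crossing is forced to land exactly on $\outdegree{G}{s_2}$.

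I would use throughout the conservation identity coming from the fact that $\circ^{s_2}$ does not vanish chips: the total number of chips of $\overline{c^{*^{s_1}}}^i$ is $\confdegree{G}{s_1}{c} + i$, so writing $S_i := \sum_{v \neq s_2}\left(\overline{c^{*^{s_1}}}^i\right)^{\circ^{s_2}*^{s_2}}(v)$ we get $f(i) + S_i = \confdegree{G}{s_1}{c} + i$, that is $\phi(i) = \confdegree{G}{s_1}{c} - S_i$. Thus $\phi(i) \leq \outdegree{G}{s_2}$ is equivalent to $S_i \geq \confdegree{G}{s_1}{c} - \outdegree{G}{s_2}$, which I will certify at one well‑chosen index by exhibiting a stable configuration of large chip sum in the right equivalence class and invoking Lemma \ref{CFG:maximum of total number of chips}.

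This last point is the main obstacle, and here is how I would overcome it. Let $T \geq 1$ be the order of the class of a single chip $\mathbf 1_{s_1}$ at $s_1$ in the Sandpile group of $\mathcal C_2$ (finite by Lemma \ref{CFG:recurrent:property:algebraic}), so that adding $T$ chips at $s_1$ is equivalent, with respect to sink $s_2$, to adding none. Consider the configuration $\hat\sigma$ on $V\backslash\set{s_2}$ equal to $c$ on $V\backslash\set{s_1,s_2}$ and equal to $\outdegree{G}{s_1}-1$ on $s_1$. Since $c$ is stable with respect to $s_1$ we have $c(v)\leq \outdegree{G}{v}-1$ for all $v\neq s_1$, so $\hat\sigma$ is stable with respect to $s_2$; a direct count gives $\sum_{v\neq s_2}\hat\sigma(v) = \confdegree{G}{s_1}{c} - c(s_2) - 1 \geq \confdegree{G}{s_1}{c} - \outdegree{G}{s_2}$, using again that $c(s_2)\leq \outdegree{G}{s_2}-1$. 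Now $\overline{c^{*^{s_1}}}^{\,T-1}$ restricted to $V\backslash\set{s_2}$ differs from $\hat\sigma$ by exactly $T$ chips at $s_1$, hence lies in the same $s_2$‑equivalence class as $\hat\sigma$; therefore their (unique, by Lemma \ref{CFG:recurrent:property:algebraic}) recurrent representatives coincide, and $S_{T-1}$ is the chip sum of that representative. By Lemma \ref{CFG:maximum of total number of chips} the recurrent configuration maximizes the chip sum over stable configurations of its class, so $S_{T-1}\geq \sum_{v\neq s_2}\hat\sigma(v)\geq \confdegree{G}{s_1}{c}-\outdegree{G}{s_2}$, which by the identity above means $\phi(T-1)\leq \outdegree{G}{s_2}$.

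Finally I would assemble the pieces. We have $\phi(0)\geq \outdegree{G}{s_2}$, $\phi(T-1)\leq \outdegree{G}{s_2}$, and $\phi(i+1)\geq \phi(i)-1$ for every $i$. Let $i^\ast$ be the least index in $[0..T-1]$ with $\phi(i^\ast)\leq \outdegree{G}{s_2}$. If $i^\ast=0$ then $\phi(0)=\outdegree{G}{s_2}$; otherwise $\phi(i^\ast-1)\geq \outdegree{G}{s_2}+1$ and $\phi(i^\ast)\geq \phi(i^\ast-1)-1\geq \outdegree{G}{s_2}$, which together with $\phi(i^\ast)\leq \outdegree{G}{s_2}$ forces $\phi(i^\ast)=\outdegree{G}{s_2}$. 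Either way $f(i^\ast)=\outdegree{G}{s_2}+i^\ast$, as required. The only delicate ingredient is the upper bound $\phi(T-1)\le\outdegree{G}{s_2}$; everything else is a repackaging of monotonicity and of the Lemmas already established.
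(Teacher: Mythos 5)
Your proof is correct and follows essentially the same route as the paper: a discrete intermediate-value argument on $f(i)=\left(\overline{c^{*^{s_1}}}^i\right)^{\circ^{s_2}}\!\!(s_2)-\outdegree{G}{s_2}-i$, with $f(0)\geq 0$ from Lemma \ref{CFG:generaltransformation}, the drop-by-at-most-one property from Lemma \ref{CFG:stabilization of large configuration}, and the upper bound obtained by comparing against the stable configuration with $\outdegree{G}{s_1}-1$ chips on $s_1$ (the paper's $\sinkaug{c}{s_1}{(-1)}$, your $\hat\sigma$) via Lemma \ref{CFG:maximum of total number of chips}. The only difference is cosmetic: you take $T$ to be the order of the class of $\textbf{1}_{s_1}$ in the Sandpile group rather than the full group order $N=|\mathcal{C}_2|$ used in the paper, which gives a slightly smaller index but changes nothing in the argument.
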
 


\begin{proof}
Let the function $f:\mathbb{N}\to \mathbb{Z}$ be defined by $f(i)=\stab{G}{s_2}{\left(\sinkaug{c}{s_1}{i}\right)}\!\!\!\!(s_2)-\outdegree{G}{s_2}-i$. We are going to prove that there exists $j \in \mathbb{N}$ such that $f(j)=0$. Since $\sinkaug{c}{s_1}{i}(v)\leq \sinkaug{c}{s_1}{i+1}(v)$ for every $v\neq s_2$, it follows from Lemma \ref{CFG:stabilization of large configuration} (using the trick $\sum \limits_{v \in V\backslash \set{s}}\!\! c(v) = \sum \limits_{v \in V} c(v) - c(s)$) that $\stab{G}{s_2}{\left(\sinkaug{c}{s_1}{i}\right)}\!\!\!\!(s_2)-c^{*^{s_1}}(s_2) \,\leq\, \stab{G}{s_2}{\left(\sinkaug{c}{s_1}{i+1}\right)}\!\!\!\!(s_2)-c^{*^{s_1}}(s_2)$, therefore $\stab{G}{s_2}{\left(\sinkaug{c}{s_1}{i}\right)}\!\!\!\!(s_2)\leq \stab{G}{s_2}{\left(\sinkaug{c}{s_1}{i+1}\right)}\!\!\!\!(s_2)$. As a consequence $f(i+1)-f(i)\geq -1$ for every $i$, that is, the function $f$ decreases by at most one.

By Lemma \ref{CFG:generaltransformation} we have $\stab{G}{s_2}{\left(\overline{c^{*^{s_1}}}\right)}\!\!(s_2)\geq \outdegree{G}{s_2}$, therefore $f(0)\geq 0$. Since $f(i+1)-f(i)\geq -1$ for any $i\in \mathbb{N}$, the proof is completed by showing that there is $j\in \mathbb{N}$ such that $f(j)\leq 0$. In particular, we are going to prove that $f(N-1)\leq 0$, where $N=|\mathcal{C}_2|$. Note that $N$ is the order of the Sandpile group of $G$ with respect to sink $s_2$.

$f(N-1)= \stab{G}{s_2}{\left(\sinkaug{c}{s_1}{N-1}\right)}\!\!(s_2)-\outdegree{G}{s_2}-(N-1)$. We are going to use Lemma \ref{CFG:maximum of total number of chips}, which states that the recurrent configuration has maximum total number of chips over stable configurations of its equivalence class, in order to upper bound $\stab{G}{s_2}{\left(\sinkaug{c}{s_1}{N-1}\right)}\!\!(s_2)$ by $c^{*^{s_1}}(s_2)+N$, and the result follows since vertex $s_2$ is stable in the recurrent configuration $c^{*^{s_1}}$ (meaning that $c^{*^{s_1}}(s_2) \leq \outdegree{G}{s_2} - 1$).

Let $\textbf{1}_{s_1}:V\to \mathbb{N}$ be given by $\textbf{1}_{s_1}(v)=0$ if $v\neq s_1$ and $\textbf{1}_{s_1}(s_1)=1$. We have $\sinkaug{c}{s_1}{N-1}=\sinkaug{c}{s_1}{(-1)}\!\!+N\,\textbf{1}_{s_1}$ (note that $\sinkaug{c}{s_1}{(-1)}\!\!:V \to \mathbb N$ is a configuration since our digraph is Eulerian and has a global sink), thus the choice of $N$ implies that $\sinkaug{c}{s_1}{N-1}$ and $\sinkaug{c}{s_1}{(-1)}$ are in the same equivalence class with respect to sink $s_2$, and the first contains $N$ more chips than the latter. The configuration $\stab{G}{s_2}{\left(\sinkaug{c}{s_1}{N-1}\right)}$ is recurrent, hence from Lemma \ref{CFG:maximum of total number of chips} we have $\sum \limits_{v \neq s_2} \sinkaug{c}{s_1}{(-1)}\!(v) \,\leq \sum \limits_{v \neq s_2} \stab{G}{s_2}{\left(\sinkaug{c}{s_1}{N-1}\right)}\!\!(v)$. It remains to exploit the total number of chips difference between the two configurations: $\sum \limits_{v \in V} \sinkaug{c}{s_1}{(-1)}\!(v) +N= \sum \limits_{v \in V} \stab{G}{s_2}{\left(\sinkaug{c}{s_1}{N-1}\right)}\!\!(v)$. Replacing $\sum \limits_{v \neq s_2} \!\!x(v)$ by $\sum \limits_{v \in V} \!x(v) - x(s_2)$ on both sides, the inequality given by Lemma \ref{CFG:maximum of total number of chips} thus becomes $\sinkaug{c}{s_1}{(-1)}\!(s_2) +N \geq \stab{G}{s_2}{\left(\sinkaug{c}{s_1}{N-1}\right)}\!\!(s_2)$, and equivalently $c^{*^{s_1}}(s_2) +N \geq \stab{G}{s_2}{\left(\sinkaug{c}{s_1}{N-1}\right)}\!\!(s_2)$.
\end{proof}

We can now construct the intended bijection $\theta$. For each $c^{*^{s_1}} \!\!\in \mathcal C_1$, let $\augnum{s_2}{c^{*^{s_1}}}$ denote the smallest number $i\in \mathbb{N}$ such that $\left(\overline{c^{*^{s_1}}}^i\right)^{\circ^{s_2}}\!\!\!\!(s_2)=\outdegree{G}{s_2}+i$. The positive integer $\augnum{s_2}{c^{*^{s_1}}}$ is called the {\em swap number} of $c^{*^{s_1}}$ from $s_1$ to $s_2$. By Lemma \ref{bijection:iexistence} we know that swap numbers are well-defined and unique.
$$\begin{array}{rrcl}
  \theta : & \mathcal C_1 & \to & \mathcal C_2\\
  & c^{*^{s_1}} & \mapsto & \left(\overline{c^{*^{s_1}}}^{\augnum{s_2}{c^{*^{s_1}}}}\right)^{\circ^{s_2}*^{s_2}}
\end{array}$$
The map $\theta$ verifies
$$\confdegree{G}{s_1}{c^{*^{s_1}}}=\confdegree{G}{s_2}{\theta(c^{*^{s_1}})}.$$
Since $|\mathcal C_1|=|\mathcal C_2|$ is finite, in order to prove Theorem \ref{CFG:sink-independence} it remains to show that $\theta$ is injective. Let us first present some properties of swap numbers. A configuration $c^{*^{s_1}} \!\!\in \mathcal{C}_1$ is called \emph{minimal} if there is no configuration $c'^{*^{s_1}} \!\!\in \mathcal{C}_1$ such that $c^{*^{s_1}} \!\neq c'^{*^{s_1}}$ and $c'^{*^{s_1}}(v) \leq c^{*^{s_1}}(v)$ for all $v\neq s_1$, and \emph{minimum} if $\underset{v\neq s_1}{\sum}c^{*^{s_1}}(v)$ is minimum over all configurations in $\mathcal{C}_1$.

\begin{prop}
\label{augment number of minimum recurrent configuration}
Let $c^{*^{s_1}}\!\!\in \mathcal{C}_1$. If $c^{*^{s_1}}$ is minimum then $\augnum{s_2}{c^{*^{s_1}}}=0$. 
\end{prop}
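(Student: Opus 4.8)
The plan is to show that the swap number vanishes exactly when the total chip content is already preserved by the $i=0$ instance of the construction, and then to force that preservation by sandwiching $\confdegree{G}{s_2}{\cdot}$ between two applications of Lemma~\ref{CFG:rectransformation} together with the minimality hypothesis. First I would record the reduction. By definition $\augnum{s_2}{c^{*^{s_1}}}$ is the least $i \in \mathbb N$ with $\left(\overline{c^{*^{s_1}}}^i\right)^{\circ^{s_2}}\!\!(s_2)=\outdegree{G}{s_2}+i$, so since $0$ is the least element of $\mathbb N$ it suffices to establish the $i=0$ identity $\left(\overline{c^{*^{s_1}}}\right)^{\circ^{s_2}}\!\!(s_2)=\outdegree{G}{s_2}$. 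Lemma~\ref{CFG:rectransformation} already gives the inequality $\left(\overline{c^{*^{s_1}}}\right)^{\circ^{s_2}}\!\!(s_2)\geq\outdegree{G}{s_2}$, so only the reverse direction remains to be proved under the minimality assumption.

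Second, I would convert this sink-value statement into a statement about $\confdegree{G}{s}{\cdot}$. The stabilization $\circ^{s_2}$ is performed on $\deleteoutarcs{G}{s_2}$, where $s_2$ absorbs chips without ever firing and no other out-arc is removed, so no chip is ever destroyed and the total chip content is conserved: $\sum_{v\in V}\left(\overline{c^{*^{s_1}}}\right)^{\circ^{s_2}}\!\!(v)=\confdegree{G}{s_1}{c^{*^{s_1}}}$. Writing $d:=\left(\overline{c^{*^{s_1}}}\right)^{\circ^{s_2}}$ and using that $d^{*^{s_2}}\in\mathcal C_2$, this rearranges to $\confdegree{G}{s_2}{d^{*^{s_2}}}=\confdegree{G}{s_1}{c^{*^{s_1}}}-\left(d(s_2)-\outdegree{G}{s_2}\right)$. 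Hence the target identity $d(s_2)=\outdegree{G}{s_2}$ is equivalent to the equality $\confdegree{G}{s_2}{d^{*^{s_2}}}=\confdegree{G}{s_1}{c^{*^{s_1}}}$, and the inequality from the first step already yields $\confdegree{G}{s_2}{d^{*^{s_2}}}\leq\confdegree{G}{s_1}{c^{*^{s_1}}}$. So the whole problem reduces to proving the reverse inequality on the sums, for a minimum configuration.

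Third, I would exploit the symmetry of Lemma~\ref{CFG:rectransformation}. Applying it with the roles of $s_1$ and $s_2$ exchanged produces a map $\mathcal C_2\to\mathcal C_1$, $d^{*^{s_2}}\mapsto\left(\overline{d^{*^{s_2}}}\right)^{\circ^{s_1}*^{s_1}}$, for which the very same chip-conservation argument gives $\confdegree{G}{s_1}{\left(\overline{d^{*^{s_2}}}\right)^{\circ^{s_1}*^{s_1}}}\leq\confdegree{G}{s_2}{d^{*^{s_2}}}$. Setting $e^{*^{s_1}}:=\left(\overline{d^{*^{s_2}}}\right)^{\circ^{s_1}*^{s_1}}\in\mathcal C_1$ and chaining the two one-sided inequalities yields $\confdegree{G}{s_1}{c^{*^{s_1}}}\geq\confdegree{G}{s_2}{d^{*^{s_2}}}\geq\confdegree{G}{s_1}{e^{*^{s_1}}}$. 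Now minimality enters: since $\outdegree{G}{s_1}$ is constant over $\mathcal C_1$, minimizing $\sum_{v\neq s_1}c^{*^{s_1}}(v)$ is the same as minimizing $\confdegree{G}{s_1}{\,\cdot\,}$, so the minimality of $c^{*^{s_1}}$ forces $\confdegree{G}{s_1}{e^{*^{s_1}}}\geq\confdegree{G}{s_1}{c^{*^{s_1}}}$. The three inequalities then collapse to equalities, giving $\confdegree{G}{s_2}{d^{*^{s_2}}}=\confdegree{G}{s_1}{c^{*^{s_1}}}$, hence $d(s_2)=\outdegree{G}{s_2}$ and therefore $\augnum{s_2}{c^{*^{s_1}}}=0$.

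I expect the main obstacle to be the careful bookkeeping of the second step: one must make precise that the chip-unvanished stabilization $\circ^{s_2}$ conserves the total number of chips, and then correctly convert the single inequality of Lemma~\ref{CFG:rectransformation} into the two one-sided inequalities on $\confdegree{G}{s}{\cdot}$, in both sink directions. Once these are in place the sandwich closes immediately, and the minimality hypothesis is used only through the trivial bound that every member of $\mathcal C_1$ has $\confdegree{G}{s_1}{\,\cdot\,}$ at least that of $c^{*^{s_1}}$.
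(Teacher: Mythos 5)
Your proof is correct and follows essentially the same route as the paper's: both apply Lemma~\ref{CFG:rectransformation} in each direction ($s_1$ to $s_2$ and back), use conservation of the total chip content under the chip-unvanished stabilization, and invoke minimality of $c^{*^{s_1}}$ to close the argument. The only difference is cosmetic --- you present it as a sandwich of inequalities where the paper argues by contradiction (its auxiliary configuration $d$ is exactly your $\overline{d^{*^{s_2}}}$).
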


\begin{proof}
By definition of $\augnum{s_2}{c^{*^{s_1}}}$, the aim is to prove that $\left(\overline{c^{*^{s_1}}}\right)^{\circ^{s_2}}\!\!\!\!(s_2)=\outdegree{G}{s_2}$. The proof relies intuitively on Lemma \ref{CFG:rectransformation}: it says that the lower bound for $\left(\overline{c^{*^{s_1}}}\right)^{\circ^{s_2}}\!\!\!\!(s_2)$ is always reached for the configuration containing the minimum total number of chips. Let us assume it is false, therefore we can define $d \neq \left(\overline{c^{*^{s_1}}}\right)^{\circ^{s_2}}$ such that $d(v)=\left(\overline{c^{*^{s_1}}}\right)^{\circ^{s_2}}\!\!\!\!(v)$ for $v \neq s_2$ and $d(s_2)= \outdegree{G}{s_2}$, and we will get a contraction to the minimality of $c^{*^{s_1}}$.

It follows from Lemma \ref{CFG:rectransformation} and the assumption that $\left(\overline{c^{*^{s_1}}}\right)^{\circ^{s_2}}\!\!\!\!(s_2) > \outdegree{G}{s_2}$, therefore $\underset{v\in V}{\sum}d(v)<\underset{v\in V}{\sum} \overline{c^{*^{s_1}}}(v)$. By Lemma \ref{CFG:rectransformation} the configuration $\left(\overline{c^{*^{s_1}}}\right)^{\circ^{s_2}*^{s_2}} \in \mathcal{C}_2$, and so does $d^{*^{s_2}}$. Applying again Lemma \ref{CFG:rectransformation} to $\mathcal{C}_2$ and $s_2$, we have $d^{\circ^{s_1}*^{s_1}} \in \mathcal{C}_1$ and $d^{\circ^{s_1}}(s_1)\geq \outdegree{G}{s_1}=\overline{c^{*^{s_1}}}(s_1)$. Now, since $\underset{v\in V}{\sum}d^{\circ^{s_1}}(v)=\underset{v\in V}{\sum} d(v) < \underset{v\in V}{\sum}\overline{c^{*^{s_1}}}(v)$, it follows that $\underset{v\neq s_1}{\sum} d^{\circ^{s_1}}(v)<\underset{v\neq s_1}{\sum} c^{*^{s_1}}(v)$, a contradiction to the minimality of $c^{*^{s_1}}$.
\end{proof}

\begin{prop}
\label{compatibility with the configuration order}
Let $c^{*^{s_1}}\!\!,c'^{*^{s_1}}\!\!\in \mathcal{C}_1$. If $c^{*^{s_1}}(v)\leq c'^{*^{s_1}}(v)$ for any $v \neq s_1$, then $\augnum{s_2}{c^{*^{s_1}}}\leq \augnum{s_2}{c'^{*^{s_1}}}$.
\end{prop}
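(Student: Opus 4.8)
The plan is to run the argument of Lemma \ref{bijection:iexistence} in parallel for the two configurations and compare the resulting threshold functions. I would define
$f(i)=\left(\overline{c^{*^{s_1}}}^i\right)^{\circ^{s_2}}\!(s_2)-\outdegree{G}{s_2}-i$ and
$g(i)=\left(\overline{c'^{*^{s_1}}}^i\right)^{\circ^{s_2}}\!(s_2)-\outdegree{G}{s_2}-i$. By definition $\augnum{s_2}{c^{*^{s_1}}}$ and $\augnum{s_2}{c'^{*^{s_1}}}$ are the least indices at which $f$ and $g$ vanish, and, as shown in the proof of Lemma \ref{bijection:iexistence}, both functions start nonnegative ($f(0),g(0)\geq 0$) and decrease by at most one per step, so their least zero coincides with the least index at which they become nonpositive. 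Hence it suffices to prove $f(i)\leq g(i)$ for every $i$: writing $i_1=\augnum{s_2}{c'^{*^{s_1}}}$, this gives $f(i_1)\leq g(i_1)=0$, and since $f(0)\geq 0$ and $f$ decreases by at most one, $f$ must take the value $0$ at some index $\leq i_1$, whence $\augnum{s_2}{c^{*^{s_1}}}\leq i_1$.

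To compare $f$ and $g$ at a fixed $i$, abbreviate $a=\overline{c^{*^{s_1}}}^i$ and $b=\overline{c'^{*^{s_1}}}^i$. The hypothesis $c^{*^{s_1}}(v)\leq c'^{*^{s_1}}(v)$ gives $a(v)\leq b(v)$ for every $v\in V$, with $a(s_1)=b(s_1)=\outdegree{G}{s_1}+i$. Since $g(i)-f(i)=b^{\circ^{s_2}}\!(s_2)-a^{\circ^{s_2}}\!(s_2)$, everything reduces to the intuitive statement that stabilizing the pointwise-larger configuration with respect to sink $s_2$ deposits at least as many chips into $s_2$.

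For this I would combine chip conservation with Lemma \ref{CFG:stabilization of large configuration}. Because $G$ is Eulerian, firing any vertex other than $s_2$ only moves chips around, so the total is preserved and $b^{\circ^{s_2}}\!(s_2)=\sum_{v\in V}b(v)-\sum_{v\neq s_2}b^{\circ^{s_2}}\!(v)$, and likewise for $a$. Subtracting, the second item of Lemma \ref{CFG:stabilization of large configuration} applied with sink $s_2$ to $a\leq b$ bounds $\sum_{v\neq s_2}b^{\circ^{s_2}}\!(v)-\sum_{v\neq s_2}a^{\circ^{s_2}}\!(v)$ above by $\sum_{v\neq s_2}(b(v)-a(v))$, leaving $b^{\circ^{s_2}}\!(s_2)-a^{\circ^{s_2}}\!(s_2)\geq b(s_2)-a(s_2)=c'^{*^{s_1}}(s_2)-c^{*^{s_1}}(s_2)\geq 0$. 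Thus $f(i)\leq g(i)$, completing the reduction above.

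I expect the only delicate point to be the bookkeeping of excluded vertices: the monotonicity of Lemma \ref{CFG:stabilization of large configuration} must be invoked with respect to the new sink $s_2$ (not $s_1$), and one must keep straight that $a$ and $b$ agree at $s_1$ but may differ at $s_2$, so that the passage between sums over $V$ and sums over $V\backslash\set{s_2}$ isolates exactly the single term $b(s_2)-a(s_2)$. Once this is handled, the rest is a direct reuse of the monotone-step behaviour of the threshold function established in Lemma \ref{bijection:iexistence}.
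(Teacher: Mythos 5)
Your proposal is correct and follows essentially the same route as the paper: both define the threshold function $f(i)=\left(\overline{c^{*^{s_1}}}^i\right)^{\circ^{s_2}}\!(s_2)-\outdegree{G}{s_2}-i$, use the second item of Lemma \ref{CFG:stabilization of large configuration} with respect to sink $s_2$ together with the hypothesis at $v=s_2$ to get $f(k)\leq g(k)=0$ at $k=\augnum{s_2}{c'^{*^{s_1}}}$, and conclude via $f(0)\geq 0$ and the "decreases by at most one" property. Your chip-conservation derivation of $d(s_2)-c^{*^{s_1}}(s_2)\leq d'(s_2)-c'^{*^{s_1}}(s_2)$ is just a more explicit version of the step the paper states directly.
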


\begin{proof}
Regarding Proposition \ref{augment number of minimum recurrent configuration}, we would intuitively expect that the number $\augnum{s_2}{x}$ increases monotonously with the total number of chips of $x$. We use the same construction as in the proof of Lemma \ref{bijection:iexistence}.

Let $k$ denote $\augnum{s_2}{c'^{*^{s_1}}}$. Let $d$ and $d'$ denote $\stab{G}{s_2}{\sinkaug{c}{s_1}{k}}$ and $\stab{G}{s_2}{\sinkaug{c'}{s_1}{k}}$, respectively. Since $\sinkaug{c}{s_1}{k}(v)\leq \sinkaug{c'}{s_1}{k}(v)$ for any $v\neq s_2$, it follows from Lemma \ref{CFG:stabilization of large configuration} that $d(s_2)-c^{*^{s_1}}(s_2)\leq d'(s_2)-c'^{*^{s_1}}(s_2)$. Since by hypothesis $c^{*^{s_1}}(s_2)\leq c'^{*^{s_1}}(s_2)$, we have $d(s_2)\leq d'(s_2)$, therefore $d(s_2)-\outdegree{G}{s_2}-k\leq d'(s_2)-\outdegree{G}{s_2}-k=0$. Let $f:\mathbb{N}\to \mathbb{Z}$ be given by $f(i)=\stab{G}{s_2}{\sinkaug{c}{s_1}{i}}\!\!\!\!(s_2)-i-\outdegree{G}{s_2}$, from the previous inequality it verifies that $f(k)\leq 0$, and $\augnum{s_2}{c^{*^{s_1}}}$ is by definition the smallest $j$ such that $f(j)=0$. By the same arguments as in the proof of Lemma \ref{bijection:iexistence}, we have $f(0)\geq 0$ and $f(i+1)-f(i)\geq -1$ for any $i\in \mathbb{N}$. As a consequence, there is $j\in [0..k]$ such that $f(j)=0$, therefore $\augnum{s_2}{c^{*^{s_1}}} \leq k=\augnum{s_2}{c'^{*^{s_1}}}$.
\end{proof}

By Propositions \ref{augment number of minimum recurrent configuration} and \ref{compatibility with the configuration order}, for a recurrent configuration $c^{*^{s_1}}\!\!\in \mathcal C_1$ the number $\augnum{s_2}{c^{*^{s_1}}}$ increases\linebreak monotonously as we add chips to $c^{*^{s_1}}$, starting from $0$ when the configuration is minimum. One tends to think that a minimal configuration $c^{*^{s_1}}$ should also have $\augnum{s_2}{c^{*^{s_1}}}=0$, but it may indeed be strictly positive as shown on Figure \ref{fig:im1314}.

\begin{figure}
\centering
\subfloat[An Eulerian graph]{\includegraphics[bb=0 0 280 286,width=1.62in,height=1.66in,keepaspectratio]{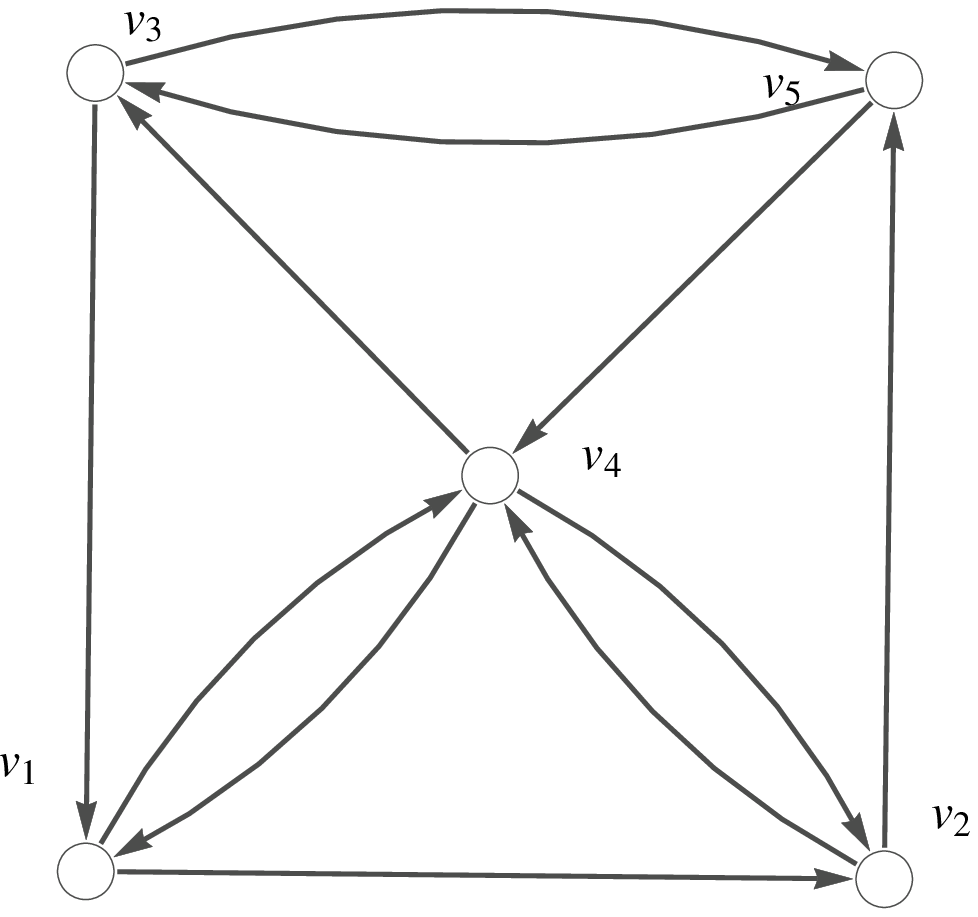}}\quad \quad
\subfloat[A minimal recurrent configuration with respect to sink $v_5$]{\includegraphics[bb=0 0 185 196,width=1.45in,height=1.54in,keepaspectratio]{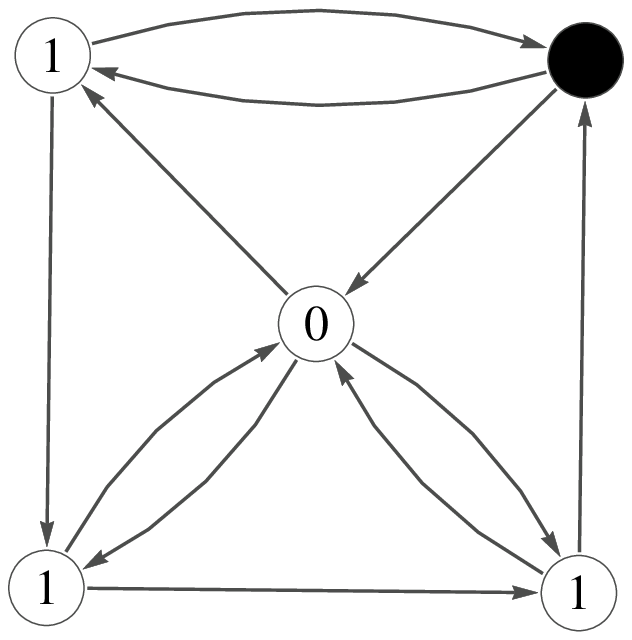}}
\caption{A minimal recurrent configuration $c^{*^{v_5}}$ with $\augnum{v_4}{c^{*^{v_5}}}=1$}
\label{fig:im1314}
\end{figure}

\begin{ques}
Give an upper bound of $\augnum{s_2}{c^{*^{s_1}}}$ when $c^{*^{s_1}}$ is minimal.
\end{ques}

There is a nice relation between swap numbers for $c^{*^{s_1}}$ from $s_1$ to $s_2$, and for $\theta(c^{*^{s_1}})$ from $s_2$ to $s_1$. The following proposition does most part of the work to prove Theorem \ref{CFG:sink-independence}, and the latter can be considered as a corollary of this result.

\begin{prop}
\label{swapping swap number}
For all $c^{*^{s_1}} \!\!\in \mathcal C_1$, we have $\augnum{s_1}{\theta(c^{*^{s_1}})}=\augnum{s_2}{c^{*^{s_1}}}$.
\end{prop}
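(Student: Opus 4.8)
The plan is to read Lemma~\ref{inversibility} as the statement that stabilisation with respect to $s_2$ is a total-chip-preserving bijection between the two sets of \emph{saturated} recurrent configurations
\[
R_1=\set{\sinkaug{c}{s_1}{i}\,:\,c^{*^{s_1}}\in\mathcal C_1,\ i\in\mathbb N}
\quad\text{and}\quad
R_2=\set{\sinkaug{d}{s_2}{j}\,:\,d^{*^{s_2}}\in\mathcal C_2,\ j\in\mathbb N},
\]
the inverse being stabilisation with respect to $s_1$. Each element of $R_1$ decomposes uniquely into a recurrent configuration together with an excess $i$ on $s_1$ (and symmetrically for $R_2$), and in this language $\augnum{s_2}{c^{*^{s_1}}}$ is the smallest excess $i$ for which the bijection sends $\sinkaug{c}{s_1}{i}$ to an element of $R_2$ carrying the \emph{same} excess on $s_2$; the number $\augnum{s_1}{\theta(c^{*^{s_1}})}$ is the mirror image of this for $\theta(c^{*^{s_1}})$.

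First I would prove the easy inequality $\augnum{s_1}{\theta(c^{*^{s_1}})}\le \augnum{s_2}{c^{*^{s_1}}}$. Writing $i=\augnum{s_2}{c^{*^{s_1}}}$ and $d^{*^{s_2}}=\theta(c^{*^{s_1}})$, the definition of the swap number gives $\stab{G}{s_2}{\sinkaug{c}{s_1}{i}}(s_2)=\outdegree{G}{s_2}+i$, hence $\stab{G}{s_2}{\sinkaug{c}{s_1}{i}}=\sinkaug{d}{s_2}{i}$. Stabilising this configuration with respect to $s_1$ returns $\sinkaug{c}{s_1}{i}$ by Lemma~\ref{inversibility}, and the latter carries exactly $\outdegree{G}{s_1}+i$ chips on $s_1$. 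Thus $i$ satisfies the defining equation of $\augnum{s_1}{\theta(c^{*^{s_1}})}$, so this swap number is at most $i$.

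For the reverse inequality I would argue by minimality, mirroring Lemma~\ref{bijection:iexistence}. Put $g(j)=\stab{G}{s_1}{\sinkaug{d}{s_2}{j}}(s_1)-\outdegree{G}{s_1}-j$; the same reasoning shows $g(0)\ge 0$ and $g(j+1)-g(j)\ge -1$, so $\augnum{s_1}{\theta(c^{*^{s_1}})}$ is the \emph{first} zero of $g$ and $g(j)>0$ strictly below it. The previous paragraph already shows $g(i)=0$, so it remains to exclude a zero $j<i$. Transporting such a hypothetical zero back through the bijection yields a recurrent configuration $c''^{*^{s_1}}=\left(\stab{G}{s_1}{\sinkaug{d}{s_2}{j}}\right)^{*^{s_1}}\in\mathcal C_1$ with $\confdegree{G}{s_1}{c''^{*^{s_1}}}=\confdegree{G}{s_1}{c^{*^{s_1}}}$ and with $c^{*^{s_1}}$ lying in the same $s_1$-class as $c''^{*^{s_1}}+(i-j)\,\textbf{1}_{s_2}$, a configuration whose chip sum exceeds that of $c^{*^{s_1}}$ by $i-j>0$. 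Comparing with the recurrent $c^{*^{s_1}}$ through Lemma~\ref{CFG:maximum of total number of chips} would then force $i-j\le 0$, the desired contradiction.

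The hard part is precisely this last comparison: Lemma~\ref{CFG:maximum of total number of chips} bounds the total only over \emph{stable} configurations of a class, whereas $c''^{*^{s_1}}+(i-j)\,\textbf{1}_{s_2}$ need not be stable once $i-j$ is large. I expect to circumvent this by adding the $i-j$ extra chips onto $s_2$ one unit at a time, using the monotonicity of swap numbers from Propositions~\ref{augment number of minimum recurrent configuration} and~\ref{compatibility with the configuration order} together with item~2 of Lemma~\ref{CFG:stabilization of large configuration}, so that each unit step compares against a genuinely stable configuration and keeps the recurrent representative pinned by Lemma~\ref{CFG:recurrent:property:algebraic}; an alternative is to run the symmetric construction and reduce the whole statement to $\theta'\circ\theta=\mathrm{id}$ for the reverse map $\theta'\colon\mathcal C_2\to\mathcal C_1$, which again turns on matching the first excess-preserving value on both sides. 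Establishing this minimality — equivalently, that no excess below $\augnum{s_2}{c^{*^{s_1}}}$ is already excess-preserving on the $\theta(c^{*^{s_1}})$ side — is where the genuine difficulty of the proposition lies, the two levers being the unicity of Lemma~\ref{CFG:recurrent:property:algebraic} and the total-chip preservation of the bijection.
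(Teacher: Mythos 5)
Your first inequality $\augnum{s_1}{\theta(c^{*^{s_1}})}\leq\augnum{s_2}{c^{*^{s_1}}}$ is correct and is essentially the paper's own argument: writing $i=\augnum{s_2}{c^{*^{s_1}}}$, Lemma \ref{inversibility} together with conservation of the total chip count shows that $i$ itself satisfies the defining equation of $\augnum{s_1}{\theta(c^{*^{s_1}})}$. The setup of the reverse inequality is also sound: a zero $j<i$ of your function $g$ does produce a recurrent $c''^{*^{s_1}}\in\mathcal C_1$ with $\confdegree{G}{s_1}{c''^{*^{s_1}}}=\confdegree{G}{s_1}{c^{*^{s_1}}}$ such that $c^{*^{s_1}}$ and $c''^{*^{s_1}}+(i-j)\,\textbf{1}_{s_2}$ lie in the same equivalence class for the CFG with sink $s_1$; this $c''$ is exactly the configuration $c'$ introduced in the paper's proof.

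The closing contradiction, however, is not established, and the repair you sketch does not go through. Let $c_m$ denote the recurrent representative of the class of $c''^{*^{s_1}}+m\,\textbf{1}_{s_2}$, so that $c_0=c''^{*^{s_1}}$ and $c_{i-j}=c^{*^{s_1}}$. Lemma \ref{CFG:maximum of total number of chips} gives $\sum_{v\neq s_1}c_{m+1}(v)\geq \sum_{v\neq s_1}c_m(v)+1$ only when $c_m+\textbf{1}_{s_2}$ is \emph{stable}, i.e. when $c_m(s_2)<\outdegree{G}{s_2}-1$; when it is not, stabilizing may push chips into $s_1$ and one only gets $\sum_{v\neq s_1}c_{m+1}(v)\leq\sum_{v\neq s_1}c_m(v)+1$, which is the useless direction. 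Proposition \ref{compatibility with the configuration order} cannot rescue the step, since it requires pointwise domination between recurrent configurations and consecutive $c_m$ need not be comparable; and reducing to $\theta'\circ\theta=\mathrm{id}$ is not a reduction but an equivalent restatement of the proposition (the paper derives the injectivity of $\theta$ \emph{from} this proposition, not the other way around). The paper closes the argument by a quantitative chip-flow analysis carried out entirely in the game with sink $s_2$: it compares the $s_2$-stabilizations of $\sinkaug{c}{s_1}{k-1}$ (recurrent, by Lemma \ref{CFG:generaltransformation}) and $\sinkaug{c'}{s_1}{(-1)}$ (merely stable) in the same $s_2$-class via Lemma \ref{CFG:maximum of total number of chips}, counts the chips sent into $s_2$ when one further chip is added at $s_1$ and the configurations are restabilized, and contradicts the identity $\left(\sinkaug{c}{s_1}{k}\right)^{\circ^{s_2}}\!\!(s_2)=\left(\sinkaug{c'}{s_1}{}\right)^{\circ^{s_2}}\!\!(s_2)+k$ using the strict inequality $\left(\sinkaug{c}{s_1}{k-1}\right)^{\circ^{s_2}}\!\!(s_2)>\outdegree{G}{s_2}+(k-1)$ furnished by the minimality of $\augnum{s_2}{c^{*^{s_1}}}$. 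Some argument of this kind, tracking where the $i-j$ surplus chips actually go, is what your sketch is missing.
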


\begin{proof}
The proposition is proved with two inequalities.
\begin{itemize}
\item $\augnum{s_1}{\theta(c^{*^{s_1}})} \leq \augnum{s_2}{c^{*^{s_1}}}$:
  
We consider $c'=\stab{G}{s_2}{\left( \sinkaug{c}{s_1}{\augnum{s_2}{c^{*^{s_1}}}} \right)}$, that is, $c'^{*^{s_2}}\!\!=\theta(c^{*^{s_1}})$. First, by definition of $\augnum{s_2}{c^{*^{s_1}}}$ we have $c'(s_2)=\outdegree{G}{s_2}+\augnum{s_2}{c^{*^{s_1}}}$, therefore $c'=\overline{\theta(c^{*^{s_1}})}^{\augnum{s_2}{c^{*^{s_1}}}}$. Second, by Lemma \ref{inversibility} applied to $c^{*^{s_1}}$, we have
$$c'^{\circ^{s_1}}= \left(\left( \sinkaug{c}{s_1}{\augnum{s_2}{c^{*^{s_1}}}} \right)^{\circ^{s_2}}\right)^{\circ^{s_1}}=\sinkaug{c}{s_1}{\augnum{s_2}{c^{*^{s_1}}}} \text{, thus } c'^{\circ^{s_1}*^{s_1}}=c^{*^{s_1}}.$$
As a consequence,
$$\confdegree{G}{s_2}{\theta(c^{*^{s_1}})}=\confdegree{G}{s_1}{c^{*^{s_1}}}=\confdegree{G}{s_1}{c'^{\circ^{s_1}*^{s_1}}}=\textbf{\texttt{sum}}\,_{G,s_1}\left(\left( \overline{\theta(c^{*^{s_1}})}^{\augnum{s_2}{c^{*^{s_1}}}} \right)^{\circ^{s_1}*^{s_1}}\right).$$
It follows that $\augnum{s_1}{\theta(c^{*^{s_1}})} \leq \augnum{s_2}{c^{*^{s_1}}}$, because $\augnum{s_1}{\theta(c^{*^{s_1}})}$ is the minimal number $i$ such that $\left( \overline{\theta(c^{*^{s_1}})}^i \right)^{\circ^{s_1}}\!\!\!\!(s_1)=\outdegree{G}{s_1}+i$ and $\augnum{s_2}{c^{*^{s_1}}}$ is such a number.

\item $\augnum{s_1}{\theta(c^{*^{s_1}})} \geq \augnum{s_2}{c^{*^{s_1}}}$:

This part of the lemma is more involved. For convenience, let us denote $\theta=\theta(c^{*^{s_1}})$. The previous inequality implies $\augnum{s_1}{\theta} \leq \augnum{s_2}{c^{*^{s_1}}}$. In order to get a contradiction, let us suppose that $\augnum{s_1}{\theta} < \augnum{s_2}{c^{*^{s_1}}}$. Let ${c'}^{*^{s_1}} \in \mathcal{C}_1$ be such that $\left({\bar{\theta}}^{\mathcal{I}_{s_1}(\theta)}\right)^{\circ^{s_1}}=\overline{c'^{*^{s_1}}}^{\mathcal{I}_{s_1}(\theta)}$ (the existence of $c'^{*^{s_1}}$ is due to Lemma \ref{CFG:rectransformation} and the definition of $\mathcal{I}_{s_1}(\theta)$). The above inequality applied to $\theta$ implies that $c'^{*^{s_1}}\neq c^{*^{s_1}}$.

We have
\begin{align}
\label{eq:cc'}
\left( \sinkaug{c}{s_1}{\augnum{s_2}{c^{*^{s_1}}}} \right)^{\circ^{s_2}} \!\!=~ \overline{\theta}^{\augnum{s_2}{c^{*^{s_1}}}} \text{~ and ~~} \left( \sinkaug{c'}{s_1}{\augnum{s_1}{\theta}} \right)^{\circ^{s_2}} \!\!=~ \overline{\theta}^{\augnum{s_1}{\theta}},
\end{align}
therefore the two configurations
$$\left( \sinkaug{c}{s_1}{\augnum{s_2}{c^{*^{s_1}}}} \right)^{*^{s_2}} \text{and ~~} \left( \sinkaug{c'}{s_1}{\augnum{s_1}{\theta}} \right)^{*^{s_2}}$$
are in the same equivalence class for the CFG with sink $s_2$. Removing $\augnum{s_1}{\theta}$ chips to $s_1$ in both configurations does not affect the equivalence relation, hence with $k = \augnum{s_2}{c^{*^{s_1}}} - \augnum{s_1}{\theta} > 0$,
$$\left( \sinkaug{c}{s_1}{k} \right)^{*^{s_2}} \text{and ~~} \left( \sinkaug{c'}{s_1}{} \right)^{*^{s_2}}$$
are also in the same equivalence class for the CFG with sink $s_2$, and from Lemma \ref{CFG:generaltransformation} and the unicity of the recurrent configuration in an equivalence class (Lemma \ref{CFG:recurrent:property:algebraic}),
\begin{align}
\label{eq:cc's2}
\left( \sinkaug{c}{s_1}{k} \right)^{\circ^{s_2}*^{s_2}} =~~ \left( \sinkaug{c'}{s_1}{} \right)^{\circ^{s_2}*^{s_2}}.
\end{align}
From equation $(\ref{eq:cc'})$ there are $k$ more chips in $\sinkaug{c}{s_1}{\augnum{s_2}{c^{*^{s_1}}}}$ than in $\sinkaug{c'}{s_1}{\augnum{s_1}{\theta}}$, thus it follows from the above equality $(\ref{eq:cc's2})$ that
\begin{align}
\label{eq:contradiction}
\left( \sinkaug{c}{s_1}{k} \right)^{\circ^{s_2}}\!\!\!\!(s_2) ~=~ \left( \sinkaug{c'}{s_1}{} \right)^{\circ^{s_2}}\!\!\!\!(s_2) + k
\end{align}

We now consider the two configurations $e$ and $e'$ defined by
$$\begin{array}{rc}
& e = \sinkaug{c}{s_1}{k} - \textbf{1}_{s_1}\\[.2em]
\iff & e + \textbf{1}_{s_1} = \sinkaug{c}{s_1}{k}
\end{array}
\text{~~~ and ~~}
\begin{array}{rc}
& e' = \sinkaug{c'}{s_1}{} - \textbf{1}_{s_1}\\[.2em]
\iff & e' + \textbf{1}_{s_1} = \sinkaug{c'}{s_1}{}
\end{array}$$
with $\textbf{1}_{s_1}$ the configuration having $1$ chip in $s_1$ and none in other vertices. It follows from equality $(\ref{eq:cc's2})$ that
\begin{align}
\label{eq:ee's2}
\left(e^{\circ^{s_2}}+\textbf{1}_{s_1}\right)^{\circ^{s_2}*^{s_2}} =~ \left(e'^{\circ^{s_2}}+\textbf{1}_{s_1}\right)^{\circ^{s_2}*^{s_2}}.
\end{align}
As we will see, it is not possible that both:
\begin{itemize}
  \item those two configurations are equal;
  \item enough chips go to $s_2$ during those stabilization processes so that equation $(\ref{eq:contradiction})$ is verified.
\end{itemize}
Let us present a reasoning contradicting equation $(\ref{eq:contradiction})$.

We first work on the total chip content of $e^{\circ^{s_2}*^{s_2}}$ and $e'^{\circ^{s_2}*^{s_2}}$. For the same reason as above, $e^{*^{s_2}}$ and $e'^{*^{s_2}}$ belong to the same equivalence class for the CFG with sink $s_2$, and so do $e^{\circ^{s_2}*^{s_2}}$ and $e'^{\circ^{s_2}*^{s_2}}$ because the firing process does not affect the equivalence relation. $e^{\circ^{s_2}*^{s_2}}=\left( \sinkaug{c}{s_1}{k-1} \right)^{\circ^{s_2}*^{s_2}}$ with $k-1 \geq 0$, thus form Lemma \ref{CFG:generaltransformation} it is recurrent. Furthermore $e'^{\circ^{s_2}*^{s_2}}$ is stable, and since they belong to the same equivalence class, it follows from Lemma \ref{CFG:maximum of total number of chips} that
\begin{align}
\label{eq:totalchips}
\sum \limits_{v \neq s_2} e^{\circ^{s_2}*^{s_2}} (v) ~~~\geq~~ \sum \limits_{v \neq s_2} e'^{\circ^{s_2}*^{s_2}} (v).
\end{align}
Now we compare the number of chips going into the sink $s_2$. Let $\mathfrak f=(v_1,\dots,v_p)$ and $\mathfrak f'=(w_1,\dots,w_{p'})$ be two firing sequences such that
$$\left(e^{\circ^{s_2}}+\textbf{1}_{s_1}\right) \overset{\mathfrak f}{\longrightarrow} \left(e^{\circ^{s_2}}+\textbf{1}_{s_1}\right)^{\circ^{s_2}} \text{~ and ~~} \left(e'^{\circ^{s_2}}+\textbf{1}_{s_1}\right) \overset{\mathfrak f'}{\longrightarrow} \left(e'^{\circ^{s_2}}+\textbf{1}_{s_1}\right)^{\circ^{s_2}}.$$
Obviously $s_2 \notin \mathfrak f$ and $s_2 \notin \mathfrak f'$, and it follows from equations $(\ref{eq:ee's2})$ and $(\ref{eq:totalchips})$ that during the stabilization process, more chips goes to $s_2$ in $\mathfrak f$ than in $\mathfrak f'$:
\begin{align}
\label{eq:totaldeg}
\sum \limits_{1 \leq i \leq p} \numarcs{G}{v_i}{s_2} ~~~~\geq~~ \sum \limits_{1 \leq i \leq p'} \numarcs{G}{w_i}{s_2}
\end{align}
In order to get the intended contradiction with $(\ref{eq:contradiction})$, let us have a close look at the chip content in both sinks $s_2$, using the fact that from the minimality of $\augnum{s_2}{c^{*^{s_1}}}$,
$$\left( \sinkaug{c}{s_1}{k-1} \right)^{\circ^{s_2}}\!\!(s_2) ~~>~~ \outdegree{G}{s_2} + (k-1).$$
$$\begin{array}{rcl}
\left( \sinkaug{c}{s_1}{k} \right)^{\circ^{s_2}}\!\!\!\!(s_2) & = & \left(e^{\circ^{s_2}}+\textbf{1}_{s_1}\right)^{\circ^{s_2}}\!\!(s_2)\\[.2em]
& = & e^{\circ^{s_2}}(s_2) + \sum \limits_{1 \leq i \leq p} \numarcs{G}{v_i}{s_2}\\[1em]
& = & \left( \sinkaug{c}{s_1}{k-1} \right)^{\circ^{s_2}}\!\!\!\!(s_2) + \sum \limits_{1 \leq i \leq p} \numarcs{G}{v_i}{s_2}\\[1em]
& > & \outdegree{G}{s_2} + (k-1) + \sum \limits_{1 \leq i \leq p} \numarcs{G}{v_i}{s_2}\\[1em]
& \underset{\text{equation }(\ref{eq:totaldeg})}{\geq} & \outdegree{G}{s_2} + (k-1) + \sum \limits_{1 \leq i \leq p'} \numarcs{G}{w_i}{s_2}\\[1em]
& \underset{\text{stability}}{\geq} & \outdegree{G}{s_2} + (k-1) + \sum \limits_{1 \leq i \leq p'} \numarcs{G}{w_i}{s_2} + e'^{\circ^{s_2}}(s_2) - \outdegree{G}{s_2} + 1\\[1em]
& = & k + \left( e'^{\circ^{s_2}} + \textbf{1}_{s_1} \right)^{\circ^{s_2}}\!\!(s_2)\\[.5em]
& = & k + \left( \sinkaug{c'}{s_1}{} \right)^{\circ^{s_2}}\!\!\!\!(s_2)
\end{array}$$
which contradicts equation $(\ref{eq:contradiction})$.\\[-2em]

\end{itemize}
\end{proof}

Theorem \ref{CFG:sink-independence} is now easy to prove.

\begin{proof}[Proof of Theorem \ref{CFG:sink-independence}.]
Since $|\mathcal{C}_1|=|\mathcal{C}_2|$, it remains to prove that the map $\theta$ is injective. For a contradiction, suppose it is not, that is, there exist $c^{*^{s_1}}$ and $c'^{*^{s_1}}$ belonging to $\mathcal{C}_1$ and such that
$$c^{*^{s_1}} \!\!\neq c'^{*^{s_1}} \text{ ~~and~~~ } \theta(c^{*^{s_1}})=\theta(c'^{*^{s_1}}).$$
By Proposition \ref{swapping swap number} we have $\augnum{s_2}{c^{*^{s_1}}}=\augnum{s_2}{c'^{*^{s_1}}}$, and Lemma \ref{inversibility} implies that
$$c^{*^{s_1}}=~\left( \overline{\theta(c^{*^{s_1}})}^{\augnum{s_2}{c^{*^{s_1}}}} \right)^{\circ^{s_1}*^{s_1}}=~~\left( \overline{\theta(c'^{*^{s_1}})}^{\augnum{s_2}{c'^{*^{s_1}}}} \right)^{\circ^{s_1}*^{s_1}}=~c'^{*^{s_1}},$$
a contradiction.
\end{proof}

\section{Tutte-like properties of generating function of recurrent configurations}
\label{tutte-like section}

We present in this section a natural generalization of the partial Tutte polynomial in one variable, for the class of Eulerian digraphs. In order to set up the most general setting, we introduce it to the class of Eulerian digraphs with loops. Note that loops are not interesting regarding the Chip-firing game: a loop simply ``freezes'' a chip on one vertex, that is the reason why we did not consider them in previous sections. The Tutte-like polynomial we present is constructed from the generating function of the set of recurrent configurations with respect to an arbitrary sink, and its unicity is based on the sink-independence property exposed in Theorem \ref{CFG:sink-independence}.

Let us first present the extension of Theorem \ref{CFG:sink-independence} to the class of Eulerian digraphs with loops. We begin with the definition of the Chip-firing game for this class of graphs. Note that the out-degree of a vertex $v$ is the number of arcs whose tail is $v$, therefore includes loops. Let $G=(V,A)$ be an Eulerian digraph possibly having loops. A vertex $v$ is \emph{firable} in a configuration $c$ if $c(v)\geq \outdegree{G}{v}$ and $\outdegree{G}{v}-\numarcs{G}{v}{v}\geq 1$, where $\numarcs{G}{v}{v}$ is the number of loops at $v$. Firing a firable vertex $v$ means the process that decreases $c(v)$ by $\outdegree{G}{v}$ and increases each $c(v')$ by $\numarcs{G}{v}{v'}$ for all $v'$, or equivalently decreases $c(v)$ by $\outdegree{G}{v}-\numarcs{G}{v}{v}$ and increases each $c(v')$ with $v'\neq v$ by $\numarcs{G}{v}{v'}$. The Burning algorithm presented in Lemma \ref{CFG:Dah} remains valid for Eulerian digraphs with loops.

As pointed above, the CFG on a digraph possibly having loops is very close to the CFG on the digraph where the loops are removed. For a digraph $G$ we denote by $\deleteloops{G}$ the digraph $G$ in which all loops are removed, and denote by $\numloops{G}$ the number of loops of $G$. Regarding undirected graphs, the influence of a loop is the same as a directed loop (it also ``freezes'' one chip). For two arcs $e$ and $e'$ of $G$, $e$ is \emph{\reverse} of $e'$ if $\tail{e}=\head{e'}$ and $\head{e}=\tail{e'}$. An undirected graph will be considered as an Eulerian digraph by replacing each edge $e$, that is not a loop, by two {\reverse} arcs $e'$ and $e''$ that have the same endpoints as $e$, and each undirected loop $e$ by exactly one directed loop $e'$ that has the same endpoint as $e$.

Theorem \ref{CFG:sink-independence} is generalized to the class of Eulerian digraphs possibly having loops with the following lemma. From now on, we will always consider an arbitrary fixed sink denoted $s$, therefore we won't use anymore the notations $*^{s}$ and $\circ^{s}$, but simply denote $c : V\backslash \set{s} \to \mathbb N$ a configuration.

\begin{lem}
\label{loop reduction}
Let $G=(V,A)$ be an Eulerian digraph with global sink $s$. Let $\mathcal{C}$ and $\overline{\mathcal{C}}$ be the sets of recurrent configurations of $G$ and $\deleteloops{G}$ with respect to sink $s$, respectively. For each configuration $c\in \overline{\mathcal{C}}$, let $\mu(c):V\backslash\set{s}\to \mathbb{N}$ be given by $\mu(c)(v)=c(v)+\numarcs{G}{v}{v}$ for any $v\neq s$. Then $\mu$ is a bijection from $\overline{\mathcal{C}}$ to $\mathcal{C}$. Moreover, $\underset{v\neq s}{\sum}c(v)-\underset{v\neq s}{\sum}\mu(c)(v)=-\underset{v\neq s}{\sum}\numarcs{G}{v}{v}$ for any $c\in \overline{\mathcal{C}}$.
\end{lem}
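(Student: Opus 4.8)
The plan is to show that adding the ``frozen'' loop chips realizes a conjugacy between the two Chip-firing dynamics. Write $\ell$ for the loop vector, $\ell(v)=\numarcs{G}{v}{v}$ for $v\neq s$, so that $\mu(c)=c+\ell$. The crucial elementary observation is that loops merely raise the firability threshold without changing the effect of a firing on the non-sink vertices: using the equivalent form of the firing rule stated above, firing $v\neq s$ in $G$ decreases $c(v)$ by $\outdegree{G}{v}-\numarcs{G}{v}{v}=\outdegree{\deleteloops{G}}{v}$ and increases each $c(v')$ (for $v'\neq v,s$) by $\numarcs{G}{v}{v'}=\numarcs{\deleteloops{G}}{v}{v'}$, exactly as firing $v$ in $\deleteloops{G}$ does. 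Hence the firing vectors $r_1,\dots,r_{n-1}$ of $G$ and of $\deleteloops{G}$ coincide. Moreover $v$ is firable in $c$ with respect to $\deleteloops{G}$ iff $c(v)\geq \outdegree{\deleteloops{G}}{v}>0$, iff $\mu(c)(v)=(c+\ell)(v)\geq \outdegree{G}{v}$ and $\outdegree{G}{v}-\numarcs{G}{v}{v}\geq 1$, iff $v$ is firable in $\mu(c)$ with respect to $G$. Consequently, whenever $c\overset{v}{\to}c'$ in $\deleteloops{G}$ we have $\mu(c)\overset{v}{\to}\mu(c')$ in $G$, and conversely on the image of $\mu$ (which is therefore closed under $G$-firing); in particular $\mu$ sends stable configurations to stable configurations and commutes with stabilization, i.e. $\mu(c^{\circ})=(\mu(c))^{\circ}$ (the left stabilization taken in $\deleteloops{G}$, the right one in $G$), by the uniqueness in Lemma~\ref{CFG:convergence}.

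Next I would check that $\mu$ sends $\overline{\mathcal{C}}$ into $\mathcal{C}$. Since removing loops does not affect the arcs leaving the sink, the configuration $\beta$ of the Burning algorithm (Lemma~\ref{CFG:Dah}), $\beta(v)=\numarcs{G}{s}{v}$, is the same for $G$ and for $\deleteloops{G}$; note also that any loop at $s$ is an out-arc of $s$ and is therefore irrelevant here, as is the value of $\mu(c)$ at $s$. For $c\in\overline{\mathcal{C}}$ we have $(c+\beta)^{\circ}=c$ in $\deleteloops{G}$, and since $\mu(c)+\beta=\mu(c+\beta)$, the commutation of $\mu$ with stabilization yields $(\mu(c)+\beta)^{\circ}=\mu\big((c+\beta)^{\circ}\big)=\mu(c)$ in $G$. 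By Lemma~\ref{CFG:Dah} this means precisely that $\mu(c)\in\mathcal{C}$.

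To conclude that $\mu$ is a bijection onto $\mathcal{C}$ I would argue by cardinality. Because the firing vectors of $G$ and $\deleteloops{G}$ coincide, the relation $\sim$ and hence the quotient $\mathbb{Z}^{n-1}/\!\sim$ are identical for the two graphs, so by Lemma~\ref{CFG:recurrent:property:algebraic} the numbers of recurrent configurations agree: $|\mathcal{C}|=|\overline{\mathcal{C}}|$. The map $\mu$ is visibly injective (it adds the fixed vector $\ell$) and, by the previous paragraph, lands in the finite set $\mathcal{C}$, which has the same cardinality as its domain $\overline{\mathcal{C}}$; an injection between finite sets of equal size is a bijection. Finally, the ``moreover'' statement is immediate, since $\underset{v\neq s}{\sum}\mu(c)(v)=\underset{v\neq s}{\sum}c(v)+\underset{v\neq s}{\sum}\numarcs{G}{v}{v}$.

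The step I expect to require the most care is the first one: one must verify precisely that a loop returns its chip to the firing vertex, so that on the restriction to $V\backslash\set{s}$ the firing of $v$ has the same arithmetic effect in $G$ and in $\deleteloops{G}$, while the firability threshold is raised by exactly $\numarcs{G}{v}{v}$; correctly treating the sink (its loops and the arcs incident to it) in the definitions of $\beta$ and of $\mu$ is the only place where carelessness could cause trouble. Everything afterwards reduces to transporting the known characterizations of recurrence (Lemmas~\ref{CFG:Dah} and~\ref{CFG:recurrent:property:algebraic}) through the conjugacy $\mu$.
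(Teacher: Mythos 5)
Your proof is correct. The paper does not write out a proof of this lemma at all: it only remarks that the statement ``can be proved easily by using the definition of recurrent configuration with the observation that if a configuration $c$ is recurrent with respect to $G$ then $c(v)\geq \numarcs{G}{v}{v}$ for any $v\neq s$.'' That hint points to a slightly different route for surjectivity than yours: one first proves that every recurrent configuration of $G$ dominates the loop vector $\ell$, so that $c\mapsto c-\ell$ is a well-defined candidate inverse, and then checks directly (via the same conjugacy you establish) that it lands in $\overline{\mathcal{C}}$. You instead get surjectivity from injectivity plus the equality $|\mathcal{C}|=|\overline{\mathcal{C}}|$, deduced from the fact that the net firing vectors, hence the equivalence relation $\sim$ and the quotient of Lemma~\ref{CFG:recurrent:property:algebraic}, are identical for $G$ and $\deleteloops{G}$. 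Both arguments share the same core observation (a loop returns its chip and only raises the firability threshold, so $\mu$ conjugates the two dynamics), and your treatment of that point, of firability, of $\beta$, and of the Burning-algorithm criterion is careful and complete. The only thing worth flagging is that your counting step applies Lemma~\ref{CFG:recurrent:property:algebraic} to the digraph $G$ \emph{with} loops, whereas the paper states its Section~2 results for loop-free multidigraphs; the extension is standard (the Sandpile group of a digraph with a global sink is governed by the reduced Laplacian built from net out-degrees, which ignores loops), but you should say so explicitly, just as the paper explicitly notes that the Burning algorithm remains valid in the presence of loops. The paper's route avoids this by exhibiting the inverse directly, at the cost of having to prove the domination observation; yours avoids that observation at the cost of the cardinality argument.
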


\begin{figure}
\centering
\subfloat[$\deleteloops{G}$]{\label{fig:im15}\includegraphics[bb=0 0 260 110,width=1.88in,height=0.791in,keepaspectratio]{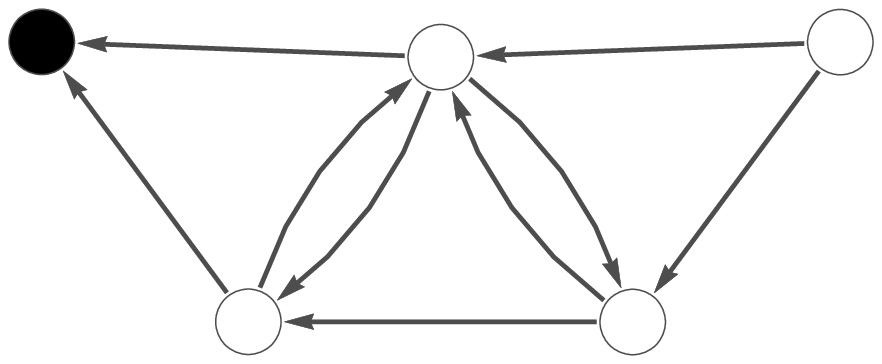}}\quad \quad
\subfloat[$G$]{\includegraphics[bb=0 0 260 140,width=1.88in,height=1.01in,keepaspectratio]{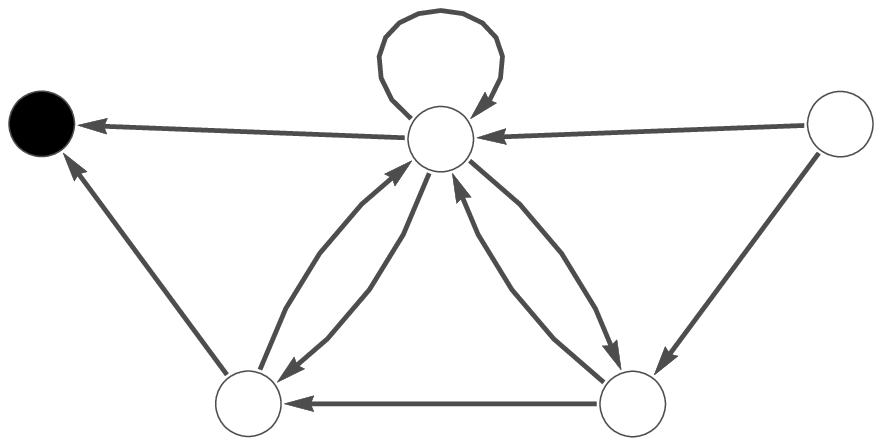}}\\
\subfloat[$c \in \overline{\mathcal C}$]{\label{fig:im16}\includegraphics[bb=0 0 234 104,width=1.88in,height=0.832in,keepaspectratio]{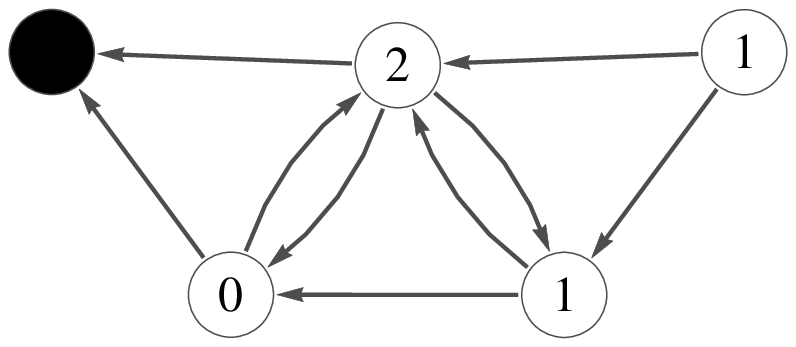}}\quad \quad
\subfloat[$\psi(c) \in \mathcal C$]{\label{fig:im17}\includegraphics[bb=0 0 260 140,width=1.88in,height=1.01in,keepaspectratio]{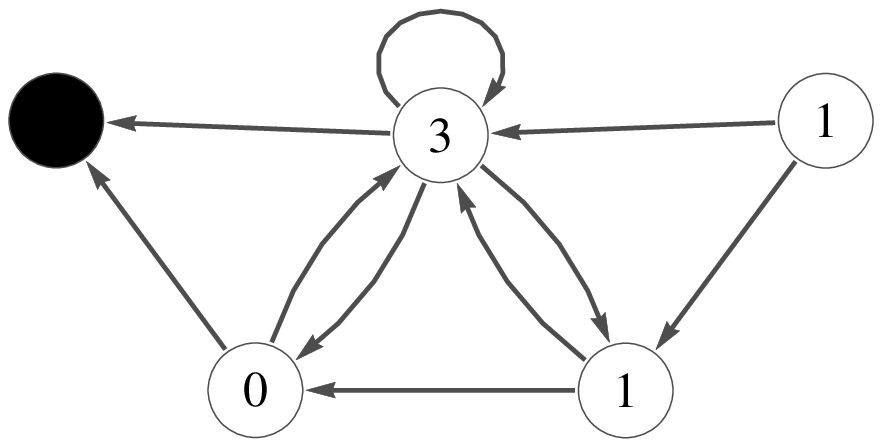}}
\caption{Relation between $\mathcal{C}$ and $\overline{\mathcal{C}}$}
\label{fig:im15161718}
\end{figure}

\noindent This lemma can be proved easily by using the definition of recurrent configuration with the observation that if a configuration $c$ is recurrent with respect to $G$ then $c(v)\geq \numarcs{G}{v}{v}$ for any $v\neq s$. An illustration of Lemma \ref{loop reduction} is given in Figure \ref{fig:im15161718}.

In the rest of this section, we work with an Eulerian digraph $G=(V,A)$ possibly having loops and an arbitrary but fixed vertex $s$ of $G$ that plays the role of sink for the game. We now introduce the partial Tutte polynomial generalization, which is defined as the generating function of the set of recurrent configurations. The generating function is based on the concept of {\em level} of recurrent configurations, which corresponds to the previously defined $\textbf{\texttt{sum}}$ normalized according to the smallest level of a recurrent configuration. For an Eulerian digraph $G$, let
$$\totalchipmin{G} \text{ denote the minimum of } \confdegree{\deleteloops{G}}{s}{c}=\outdegree{\deleteloops{G}}{s}+\underset{v\neq s}{\sum}c(v)$$
over all recurrent configurations $c$ of $\deleteloops{G}$ with respect to sink $s$. Theorem \ref{CFG:sink-independence} implies that $\totalchipmin{G}$ is independent of the choice of $s$. It follows from \cite{PePh13} that the problem of finding $\totalchipmin{G}$ is NP-hard for Eulerian digraphs, and as a consequence the Tutte-like polynomial we present is also NP-hard to compute. In addition, when $G$ is undirected (and defined as a digraph {\em i.e.}, each edge is represented by two {\reverse} arcs), the number $\totalchipmin{G}$ has an exact formula, namely $\totalchipmin{G}=\frac{|A(\deleteloops{G})|}{2}$. For a recurrent configuration $c$ of $G$ with respect to sink $s$ we define 
$$\level{G}{c}=\confdegree{G}{s}{c}-\totalchipmin{G}.$$
This is a generalization of the level that was defined in \cite{Lop97}, because we recover the latter when $G$ is undirected. Let $\mathcal{C}$ denote the set of all recurrent configurations of $G$ with respect to sink $s$, the generating function of $\mathcal{C}$ is given by
$$\Tut{G}{y}=\underset{c\in \mathcal{C}}{\sum} y^{\level{G}{c}}$$
and we claim that it is a natural generalization of the partial Tutte polynomial, for the class of Eulerian digraphs. First, it follows from Theorem \ref{CFG:sink-independence} that $\Tut{G}{y}$ is independent of the choice of $s$, thus is characteristic of the support graph $G$ itself. We are going to present in this section a number of properties of $\Tut{G}{y}$ that can be considered as the generalizations of those of the Tutte polynomial in one variable, namely $T_G(1,y)$, that is defined on undirected graphs. The most interesting and new feature is that when $G$ is an undirected graph we get back to the well-known Tutte polynomial. This fact is straightforward to notice.

\begin{itemize}
  \item $\Tut{G}{y}=T_G(1,y)$ if $G$ is an undirected graph.
  \item $\Tut{G}{1}$ counts the number of oriented spanning tree of $G$ rooted at $s$ \cite{HLMPPW08}. It generalizes the evaluation $T_G(1,1)$ that counts the number of spanning tree of an undirected graph.
  \item $\Tut{G}{0}$ counts the number of maximum acyclic arc sets with exactly one sink $s$ \cite{PePh13}. Therefore $\Tut{G}{0}$ is a natural generalization of $T_G(1,0)$ that counts the number of acyclic orientations with a fixed source of an undirected graph.
\end{itemize}  

\begin{ques}
What does $\Tut{G}{2}$ count?
\end{ques}

Those evaluations set up a promising ground for further investigations, but it is definitely not trivial to find out the objects counted by evaluations of graph polynomials. We are now going to present the extension to $\Tut{G}{y}$ of four known recursive formulas for the Tutte polynomial in the undirected case. We will need the two following simple lemmas.

\begin{notation}
For a function $f:X\to Y$ and a subset $X'\subseteq X$ we denote by $f_{|X'}$ the restriction of $f$ to $X'$.
\end{notation}

For a subset $A'$ of $A$ let $\deletearc{G}{A'}$ denote the graph $(V,A\backslash A')$. We write $\deletearc{G}{e}$ for $\deletearc{G}{\set{e}}$ if $e$ is an arc of $G$. For an arc $e$ of $G$ with two endpoints $v$ and $v'$ let $\contract{G}{e}$ denote the digraph that is made from $G$ by removing $e$ from $G$, replacing $v$ and $v'$ by a new single vertex $v''$, and for each remaining arc $e'$ if the head (resp. tail) of $e'$ in $G$ is $v$ or $v'$ then the head (resp. tail) of $e'$ in $\contract{G}{e}$ is $v''$. This procedure is called \emph{arc contraction}. See Figure \ref{fig:im1920} for an illustration of the arc contraction. Note that $\contract{G}{e}$ is still Eulerian.

An analogue of arc contraction may also be defined for vertices. For a subset $W$ of $V$, let $\vertexcontract{G}{W}$ denote the digraph constructed from $G$ by replacing all vertices in $W$ by a new vertex $w'$, and each arc $e\in A$ such that $\tail{e}\in W$ (resp. $\head{e}\in W$) in $G$ by $\tail{e}=w'$ (resp. $\head{e}=w'$) in $\vertexcontract{G}{W}$. The following is originally due to Merino L\'opez \cite{Lop97}.

\begin{figure}
\centering
\subfloat[$G$]{\label{fig:im19}\includegraphics[bb=0 0 382 171,width=1.88in,height=0.84in,keepaspectratio]{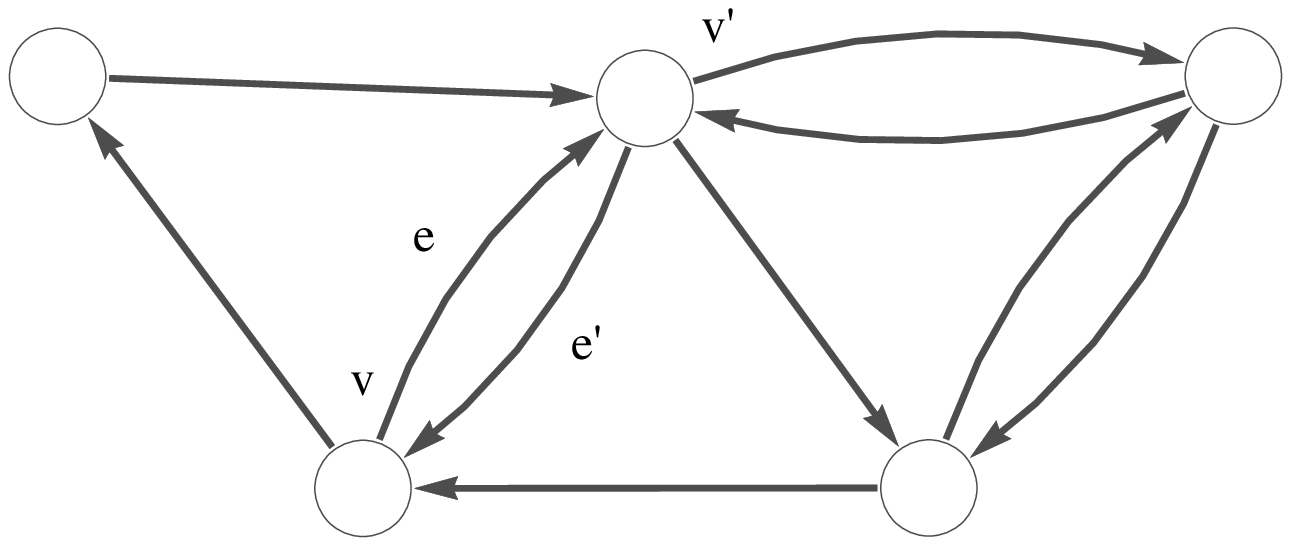}}\quad \quad
\subfloat[$\contract{G}{e}$]{\label{fig:im20}\includegraphics[bb=0 0 340 187,width=1.88in,height=1.03in,keepaspectratio]{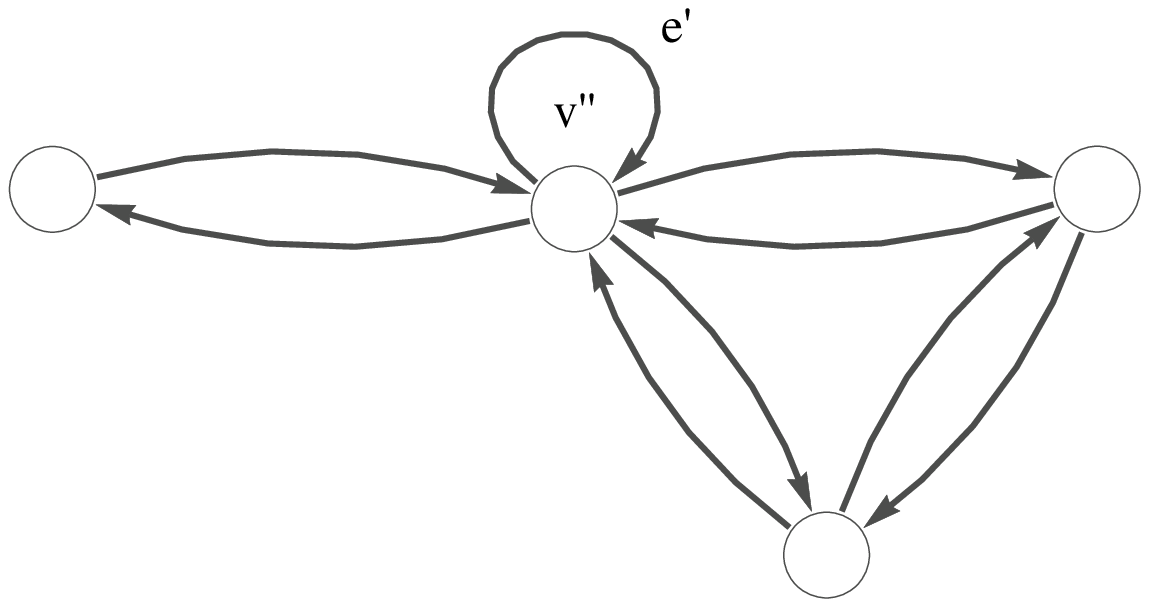}}
\caption{Arc contraction}
\label{fig:im1920}
\end{figure}

\begin{lem}
\label{recurrent:contraction}
Let $W$ be a non-empty subset of the set of out-neighbors of $s$, \emph{i.e.}, for every $v\in W$ we have $v\neq s$ and $(s,v) \in A$. Let $s'$ be the new vertex in $\vertexcontract{G}{W\cup \set{s}}$ resulting from replacing the set of vertices $W\cup \set{s}$. For any $c\in \mathcal{C}$, if $c(v)\geq \outdegree{G}{v}-\numarcs{G}{s}{v}$ for all $v\in W$, then $c_{|V\backslash (W\cup \set{s})}$ is a recurrent configuration of $\vertexcontract{G}{W\cup \set{s}}$ with respect to sink $s'$. Conversely, if $c'$ is a recurrent configuration of $\vertexcontract{G}{W\cup \set{s}}$ with respect to sink $s'$ then every configuration $c:V\backslash\set{s}\to \mathbb{N}$, satisfying $c(v)=c'(v)$ for all $v\in V\backslash(W\cup \set{s})$ and $\outdegree{G}{v} > c(v)\geq \outdegree{G}{v}-\numarcs{G}{s}{v}$ for all $v\in W$, is in $\mathcal{C}$.
\end{lem}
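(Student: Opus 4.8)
The plan is to prove both inclusions with the Burning algorithm (Lemma \ref{CFG:Dah}) together with the uniqueness/confluence of stabilization (Lemma \ref{CFG:convergence}). Throughout, write $H:=\vertexcontract{G}{W\cup\set{s}}$, let $\hat c:=c_{|V\setminus(W\cup\set{s})}$, and let $\beta$ (resp. $\beta'$) be the sink-firing configuration of $G$ with sink $s$ (resp. of $H$ with sink $s'$), so $\beta(v)=\numarcs{G}{s}{v}$ and $\beta'(u)=\numarcs{H}{s'}{u}$. The central observation I would set up first is that firing the contracted sink $s'$ in $H$ reproduces, on the vertices of $V\setminus(W\cup\set{s})$, the combined effect of firing $s$ and then all of $W$ in $G$: for $u\notin W\cup\set{s}$ the contraction gives $\numarcs{H}{s'}{u}=\sum_{w\in W\cup\set{s}}\numarcs{G}{w}{u}$ and $\outdegree{H}{u}=\outdegree{G}{u}$. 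The hypothesis $c(v)\geq\outdegree{G}{v}-\numarcs{G}{s}{v}$ for $v\in W$ says exactly that, after adding $\beta$, every vertex of $W$ carries at least its out-degree and is thus firable; since firing inside $W$ only increases chip counts, all of $W$ can be fired immediately after $s$. Hence, starting from $c+\beta$ in $G$ and firing all of $W$, the restriction to $V\setminus(W\cup\set{s})$ is exactly $\hat c+\beta'$.

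For the forward direction, since $c\in\mathcal{C}$ the Burning algorithm yields a firing sequence from $c+\beta$ in which each non-sink vertex fires exactly once and which returns to $c$; by confluence these firing counts are forced, so I may reorder it to fire $s$, then all of $W$, then the remaining vertices of $V\setminus(W\cup\set{s})$ once each. This tail is also a legal firing sequence in $H$ from $\hat c+\beta'$, because on $V\setminus(W\cup\set{s})$ the chip counts and out-degrees coincide in $G$ and $H$ (the only discrepancy is chips leaving toward $s$ or $W$, which in $H$ merely vanish into $s'$, and $s,W$ do not fire again during the tail). It brings $\hat c+\beta'$ back to $\hat c$, which is stable in $H$ as $c$ is stable in $G$; thus $(\hat c+\beta')^{\circ}=\hat c$ by Lemma \ref{CFG:convergence}, and $\hat c$ is recurrent in $H$ by Lemma \ref{CFG:Dah}.

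For the converse, given a recurrent $c'$ in $H$ and $c$ built as prescribed, I would exhibit one legal firing sequence from $c+\beta$ that fires each non-sink vertex exactly once and terminates at $c$. Fire $s$, then all of $W$ (legal by the lower-bound condition), reaching $\hat c+\beta'=c'+\beta'$ on $V\setminus(W\cup\set{s})$; then apply the $H$-burning sequence of $c'$, which is legal in $G$ for the same chip-count reason and returns these vertices to $c'$, i.e. to $c$ there. The decisive check is the content on $W$ at the end: every $w\neq v$ (including $s$) has fired exactly once, so $v\in W$ receives $\sum_{w\neq v}\numarcs{G}{w}{v}=\indegree{G}{v}$ chips and loses $\outdegree{G}{v}$; since $G$ is Eulerian the net change is zero and $v$ is left at $c(v)$. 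The endpoint is therefore $c$, which is stable because $c(v)<\outdegree{G}{v}$ on $W$ and $c'(v)<\outdegree{H}{v}=\outdegree{G}{v}$ elsewhere. Lemma \ref{CFG:convergence} then gives $(c+\beta)^{\circ}=c$ and Lemma \ref{CFG:Dah} gives $c\in\mathcal{C}$.

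The main obstacle is the bookkeeping that matches firing sequences across the contraction: verifying that a legal sequence in $H$ lifts to a legal sequence in $G$ once $s$ and $W$ have been fired, and that no vertex of $W$ is forced to fire a second time. Both points rest on the Eulerian balance $\indegree{G}{v}=\outdegree{G}{v}$ and on the ``each non-sink vertex fires exactly once'' guarantee of the Burning algorithm. I would also note that loops do not disturb the argument, since for $w\in W$ a loop contributes equally to in- and out-degree and hence leaves the net-zero computation intact.
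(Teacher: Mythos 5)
Your proof is correct and follows essentially the same route as the paper's: both use the Burning algorithm to extract a firing sequence for $c+\beta$ that fires $s$ and then all of $W$ first, observe that the remaining tail restricted to $V\setminus(W\cup\set{s})$ is exactly a burning sequence for the contracted graph (and conversely lift a burning sequence of $c'$ by prepending the firings of $W$), and invoke uniqueness of stabilization. Your explicit Eulerian net-zero check that the vertices of $W$ return to their original chip content is a detail the paper leaves implicit, but the argument is the same.
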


\begin{proof}
This proof is straightforward, we use Lemma \ref{CFG:Dah} (Burning algorithm) and the hypothesis that a configuration is recurrent, thus it admits a firing sequence, in order to construct a firing sequence for the considered configuration, which proves that it is recurrent (again by Lemma \ref{CFG:Dah}).

We denote the vertices in $W$ by $w_1,w_2,\cdots,w_q$. Let the configuration $\beta:V\backslash\set{s}\to \mathbb{N}$ be given by $\beta(v)=\numarcs{G}{s}{v}$ for any $v\in V\backslash\set{s}$. The condition $c(w_i)\geq \outdegree{G}{s}-\numarcs{G}{s}{w_i}$ for any $i$ implies that $w_i$ is firable in $c+\beta$ for any $i$. It follows from Lemma \ref{CFG:convergence} and Lemma \ref{CFG:Dah} that there is a firing sequence $\mathfrak{f}=(v_1,v_2,\cdots,v_k)$ of $c+\beta$ in $G$ such that $c+\beta\overset{\mathfrak{f}}{\to} c$, $v_i\neq s$ for any $i$, each vertex of $G$ distinct from $s$ occurs exactly once in $\mathfrak{f}$, and $v_i=w_i$ for any $i\in [1..q]$. Let $d$ be such that $c+\beta\overset{w_1,w_2,\cdots,w_p}{\longrightarrow} d$. Let $\beta':V\backslash (W\cup \set{s})\to \mathbb{N}$ be given by $\beta'(v)=\numarcs{\vertexcontract{G}{W\cup \set{s}}}{s'}{v}$ for any $v\in V\backslash (W\cup \set{s})$. Clearly, we have $d_{|V\backslash (W\cup \set{s})}=c_{|V\backslash(W\cup \set{s})}+\beta'$.  Since $\mathfrak{f}'=(v_{p+1},v_{p+2},\cdots,v_k)$ is a firing sequence of $d$, $\mathfrak{f}'$ is also a firing sequence of $c_{|V\backslash(W\cup \set{s})}+\beta'$ in $\vertexcontract{G}{W\cup \set{s}}$. It follows from Lemma \ref{CFG:Dah} that $c_{|V\backslash (W\cup \set{s})}$ is a recurrent configuration of $\vertexcontract{G}{W\cup \set{s}}$ with respect to sink $s'$.

For the converse statement let $\mathfrak{g}=({v'}_1,{v'}_2,\cdots,{v'}_p)$ be a firing sequence of $c'$ such that $c'+\beta'\overset{\mathfrak{g}}{\to}c'$ in $\vertexcontract{G}{W\cup \set{s}}$, then $v'_i\not \in W\cup \set{s}$ for any $i$, and each vertex of $G$ not in $W\cup \set{s}$ occurs exactly once in $\mathfrak{g}$. Let $c''$ be such that $c+\beta\overset{w_1,w_2,\cdots,w_q}{\longrightarrow}c''$ in $G$. Clearly, $c''_{|V\backslash(W\cup \set{s})}=c'+\beta'$, therefore $(w_1,w_2,\cdots,w_q,{v'}_1,{v'}_2,\cdots,{v'}_p)$ is a firing sequence of $c+\beta$ in $G$. It follows that $c\in \mathcal{C}$.
\end{proof}

\begin{lem}
\label{generatingfunction:deletion}
Let $e$ and $e'$ be two {\reverse} arcs of $G$ such that they are not loops and $\tail{e}=s$. Let $H$ denote $\deletearc{G}{\set{e,e'}}$ and $w$ denote $\head{e}$. If $H$ is connected then $\set{c \in \mathcal{C}:c(w)<\outdegree{G}{w}-1}$ is the set of all recurrent configurations of $H$ with respect to sink $s$.
\end{lem}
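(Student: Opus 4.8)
The plan is to reduce everything to the Burning algorithm (Lemma~\ref{CFG:Dah}) and to transfer a burning firing sequence from one of the two games to the other by replaying it verbatim. First I would settle the bookkeeping that makes the two games comparable. Since $e$ and $e'$ form a pair of reverse arcs, deleting them lowers both the in- and the out-degree of $s$ and of $w$ by one and leaves every other degree untouched, so $H$ is again Eulerian; being connected by hypothesis it is then strongly connected, and recurrent configurations of $H$ with respect to $s$ are well defined on the same domain $V\backslash\set{s}$ as those of $G$. Because $e$ is not a loop, $w\neq s$, and since $\outdegree{H}{w}=\outdegree{G}{w}-1$ the condition $c(w)<\outdegree{G}{w}-1$ says exactly that $w$ is stable in $H$; as the stability constraints at all other vertices coincide in $G$ and $H$, both sides of the claimed equality consist of configurations obeying the same stability conditions, and the whole issue is to match recurrence.

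The heart of the matter is a comparison of the two dynamics, which differ only around $w$. Writing $\beta_G,\beta_H$ for the sink-firing vectors and $r_v^G,r_v^H$ for the firing vector of $v$, deleting $e=(s,w)$ gives $\beta_G=\beta_H+\textbf{1}_w$, deleting $e'=(w,s)$ gives $r_w^G=r_w^H-\textbf{1}_w$, and $r_v^G=r_v^H$ for every $v\neq w$; in particular $\beta_G+r_w^G=\beta_H+r_w^H$, so firing the sink and then $w$ is the same net move in both graphs. Given a recurrent configuration in one of the two games, I would take the burning sequence $\mathfrak f$ provided by Lemma~\ref{CFG:Dah} --- in which every vertex other than $s$ is fired exactly once --- and replay the same ordered sequence in the other game, starting from $c+\beta_G$ versus $c+\beta_H$, two configurations differing only by $\textbf{1}_w$. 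The invariant to carry along is that the two running configurations differ by exactly $\textbf{1}_w$ up to the unique step at which $w$ is fired, and coincide from that step on: firings of vertices $v\neq w$ use identical firing vectors and never read the chip count at $w$, hence preserve the invariant and transfer legality verbatim, while the identity $\beta_G+r_w^G=\beta_H+r_w^H$ forces the two configurations to agree immediately after $w$ fires.

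The delicate step --- and the real content of the argument --- is the legality of firing $w$ itself, because at that instant the two configurations differ by one chip at $w$ while $\outdegree{G}{w}$ and $\outdegree{H}{w}$ also differ by one. These two discrepancies cancel: passing from $H$ to $G$ there is one extra chip on $w$ exactly when one extra out-arc must be saturated, and passing from $G$ to $H$ one fewer chip is required because one fewer out-arc is present, so $w$ is firable in one game precisely when it is in the other. (Nonnegativity of the shifted configuration is not an issue, since $c+\beta_G$ already carries at least one chip on $w$ coming from the arc $e$.) Once the replayed sequence is seen to be legal and to return to $c$ --- it fires each non-sink vertex once and ends at $c$ by the invariant --- I would invoke Lemma~\ref{CFG:convergence}: $c$ is stable in the target game and reachable from its sink-augmented configuration, hence equals the stabilization, so the Burning algorithm certifies recurrence there. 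Carrying out this transfer in both directions gives the two inclusions and hence the stated equality of sets.
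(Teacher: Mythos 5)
Your proposal is correct and follows essentially the same route as the paper's proof: both directions transfer the burning sequence of Lemma \ref{CFG:Dah} verbatim between the two games, using the fact that the running configurations differ by exactly one chip at $w$ until $w$ is fired, so that the one-chip discrepancy cancels the one-unit difference between $\outdegree{G}{w}$ and $\outdegree{H}{w}$ when checking firability of $w$. Your explicit invariant (configurations differ by $\textbf{1}_w$ before $w$ fires and coincide afterwards, via $\beta_G+r_w^G=\beta_H+r_w^H$) is a slightly more systematic bookkeeping of what the paper argues only up to the firing of $w$, but it is the same argument.
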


\begin{proof}
We prove a double inclusion, using again both directions of Lemma \ref{CFG:Dah} (Burning algorithm).

Let $\beta:V\backslash\set{s}\to \mathbb{N}$ be given as in Lemma \ref{recurrent:contraction} and $\overline{\beta}:V\backslash\set{s}\to \mathbb{N}$ be given by $\beta(v)=\numarcs{H}{s}{v}$ for any $v \in V\backslash\set{s}$. We have $\beta(v)=\overline{\beta}(v)$ for any $v\neq w$, and $\beta(w)-\overline{\beta}(w)=1$. Let $c\in \mathcal{C}$ such that $c(w)<\outdegree{G}{w}-1$. We have to prove that $c$ is also a recurrent configuration of $H$ with respect to sink $s$. Let $\mathfrak{f}=(v_1,v_2,\cdots,v_k)$ be a firing sequence of $c$ in $G$ such that $v_i\neq s$ for any $i$, $c+\beta\overset{\mathfrak{f}}{\to} c$, and each vertex of $G$ distinct from $s$ occurs exactly once in $\mathfrak{f}$. We will show that $\mathfrak{f}$ is a firing sequence of $c+\overline{\beta}$ in $H$. Let $j$ be such that $v_j=w$. Clearly, $(v_1,v_2,\cdots,v_{j-1})$ is a firing sequence of $c+\beta$ and $c+\overline{\beta}$ in $G$ and $H$, respectively. Let $c'$ be such that $c+\beta\overset{v_1,v_2,\cdots,v_{j-1}}{\longrightarrow}c'$ in $G$ and $d'$ be such that $c+\overline{\beta}\overset{v_1,v_2,\cdots,v_{j-1}}{\longrightarrow} d'$ in $H$. It follows from $\beta(w)-\overline{\beta}(w)=1$ that $c'(w)-d'(w)=1$. To prove that $\mathfrak{f}$ is a firing sequence of $c+\overline{\beta}$ in $H$ it suffices to show that $v_j$ is firable in $d'$ with respect to $H$. Since $v_j$ is firable in $c'$ with respect to $G$, we have $c'(w)\geq \outdegree{G}{w}$, therefore $d'(w)\geq \outdegree{G}{w}-1$. It follows that $w$ is firable in $d'$ with respect to $H$. This implies that $\mathfrak{f}$ is also a firing sequence of $c+\overline{\beta}$ with respect to $H$. By Lemma \ref{CFG:Dah}, $c$ is a recurrent configuration of $H$ with respect to sink $s$.

For the converse, let $d$ be a recurrent configuration of $H$ with respect to sink $s$. Let $\mathfrak{f}=(v'_1,v'_2,\cdots,v'_p)$ be a firing sequence of $d+\overline{\beta}$ in $H$ such that $v'_i\neq s$ for any $i$, $d+\overline{\beta}\overset{\mathfrak{f}'}{\to}d$ in $H$, and each vertex of $H$ distinct from $s$ occurs exactly once in $\mathfrak{f}'$. We have $d(w)\leq \outdegree{H}{w}-1<\outdegree{G}{w}-1$. By similar arguments as above, $\mathfrak{f}'$ is also a firing sequence of $d+\beta$ in $G$, therefore $d$ is a recurrent configuration of $G$ with respect to sink $s$.
\end{proof}

First, the Tutte polynomial has the recursive formula $T_G(1,y)=y\,T_{\deletearc{G}{e}}(1,y)$ if $e$ is a loop. We have the following generalization.

\begin{prop}
\label{loop recursion}
If $e$ is a loop then $\Tut{G}{y}=y\,\Tut{\deletearc{G}{e}}{y}$.
\end{prop}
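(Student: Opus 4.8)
The plan is to build a level-shifting bijection between the recurrent configurations of $G$ and those of $\deletearc{G}{e}$, and to check that the level shifts by exactly $1$. Write $G'=\deletearc{G}{e}$. The crucial opening observation is that deleting a single loop does not alter the loopless reduction, so $\deleteloops{G}=\deleteloops{G'}$. Two consequences follow immediately: first, since $\totalchipmin{G}$ is defined purely in terms of the loopless graph, we get $\totalchipmin{G}=\totalchipmin{G'}$; second, the sets $\overline{\mathcal C}$ of recurrent configurations of $\deleteloops{G}$ and of $\deleteloops{G'}$ coincide. Note also that removing a loop keeps the digraph connected and Eulerian (a loop contributes equally to in- and out-degree), so $\Tut{G'}{y}$ is well defined.

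First I would apply Lemma \ref{loop reduction} to both $G$ and $G'$. It gives a bijection $\mu_G$ from the common set $\overline{\mathcal C}$ onto the recurrent configurations $\mathcal C$ of $G$, via $\mu_G(\bar c)(v)=\bar c(v)+\numarcs{G}{v}{v}$, and likewise a bijection $\mu_{G'}$ from $\overline{\mathcal C}$ onto the recurrent configurations $\mathcal C'$ of $G'$. The composite $\mu_G\circ\mu_{G'}^{-1}$ is then a bijection $\mathcal C'\to\mathcal C$; concretely, for the pair $c=\mu_G(\bar c)$ and $c'=\mu_{G'}(\bar c)$ coming from a common $\bar c$, the configurations $c$ and $c'$ agree on every vertex except possibly the endpoint $u$ of the loop $e$.

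Next I would compute the level difference along this bijection. Using $\confdegree{G}{s}{c}=\outdegree{G}{s}+\sum_{v\neq s}c(v)$ and the explicit form of $\mu_G,\mu_{G'}$, a direct expansion gives
$$\confdegree{G}{s}{c}-\confdegree{G'}{s}{c'}=\big(\outdegree{G}{s}-\outdegree{G'}{s}\big)+\sum_{v\neq s}\big(\numarcs{G}{v}{v}-\numarcs{G'}{v}{v}\big).$$
Because $e$ is a loop at a single vertex $u$, exactly one of these two terms equals $1$ and the other $0$: if $u=s$ the out-degree of the sink drops by one (out-degree counts loops), while if $u\neq s$ the loop count on a non-sink vertex drops by one. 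Hence the difference is always $1$, and since $\totalchipmin{G}=\totalchipmin{G'}$ we obtain $\level{G}{c}=\level{G'}{c'}+1$ for every $\mu$-related pair. Summing the generating function over the bijection then yields $\Tut{G}{y}=\sum_{c\in\mathcal C}y^{\level{G}{c}}=\sum_{c'\in\mathcal C'}y^{\level{G'}{c'}+1}=y\,\Tut{G'}{y}$.

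The only real point requiring care is the bookkeeping of where the loop sits: a loop at the sink contributes to the level through $\outdegree{G}{s}$, whereas a loop at an ordinary vertex contributes through the chip shift of Lemma \ref{loop reduction}. The unified computation of the level difference above absorbs both situations into a single identity, so no genuine case split survives and the argument stays routine.
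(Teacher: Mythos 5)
Your proof is correct, but it takes a different route from the paper's. The paper's proof is a two-line argument that exploits the freedom to choose the sink: it sets $s=\tail{e}$, observes that a loop at the sink is completely inert so that $\mathcal{C}$ is literally the same set for $G$ and for $\deletearc{G}{e}$, and reads off the level shift from $\outdegree{G}{s}-\outdegree{\deletearc{G}{e}}{s}=1$ (with $\totalchipmin{G}=\totalchipmin{\deletearc{G}{e}}$ for free). You instead keep the sink arbitrary and route both graphs through their common loopless reduction $\deleteloops{G}=\deleteloops{\deletearc{G}{e}}$ via the bijections of Lemma \ref{loop reduction}, then verify that the level shift of $1$ comes either from the out-degree of the sink (loop at $s$) or from the chip shift $\numarcs{G}{u}{u}-\numarcs{\deletearc{G}{e}}{u}{u}$ (loop at $u\neq s$). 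Your computation of the difference $\confdegree{G}{s}{c}-\confdegree{G'}{s}{c'}$ is right, and the two cases do collapse into a single identity as you claim. What the paper's choice of sink buys is brevity; what your version buys is that the level-preserving-up-to-shift bijection is exhibited for every choice of sink simultaneously, without having to invoke Theorem \ref{CFG:sink-independence} to relocate the sink onto the loop's vertex. Both arguments are sound.
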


\begin{proof}
Let $s$ denote $\tail{e}$. Let $\mathcal{C}$ be the set of all recurrent configurations of $G$ with sink $s$. Clearly, $\mathcal{C}$ is also the set of all recurrent configuartions of $\deletearc{G}{e}$ with sink $s$. Since $\outdegree{G}{s}-\outdegree{H}{s}=1$, for any $c \in \mathcal{C}$ we have $\level{G}{c}-\level{H}{c}=1$. This implies that $\Tut{G}{y}=y\Tut{\deletearc{G}{e}}{y}$.
\end{proof}

Second, in order to generalize the recursive formula $T_G(1,y)=T_{\contract{G}{e}}(1,y)$ if $e$ is a bridge, we generalize the notion of bridge to directed graphs with the following.

\begin{defi}
An arc $b$ in $G$ is called \emph{bridge} if $\deletearc{G}{b}$ is not strongly connected.
\end{defi}


The next lemma is used in the proofs of subsequent propositions, aimed at showing that this definition of bridge is a natural generalization of the same notion on undirected graphs.


\begin{lem}
\label{bridge:separation}
Let $b$ be a bridge of $G$. Then there is a subset $X$ of $V$ such that $\set{b}=\set{e\in A: \tail{e}\in X,\head{e}\not \in X}$. Moreover, there is an arc $b'$ in $G$ such that $\set{b'}=\set{e\in A: \tail{e}\not \in X, \head{e}\in X}$.
\end{lem}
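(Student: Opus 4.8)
The plan is to produce the set $X$ by a reachability argument inside $\deletearc{G}{b}$, and then to obtain the reverse arc $b'$ essentially for free from the Eulerian hypothesis via a cut-counting argument. Write $u=\tail{b}$ and $v=\head{b}$. I would define $X$ to be the set of vertices that \emph{cannot} reach $v$ by a directed path in $\deletearc{G}{b}$. This set is automatically closed under outgoing arcs in $\deletearc{G}{b}$: if some $w\in X$ had an arc of $\deletearc{G}{b}$ to a vertex $w'\notin X$, then $w$ would reach $v$ through $w'$, contradicting $w\in X$. Consequently the only arc of $G$ that can leave $X$ is $b$ itself, and it remains to verify that $b$ really does leave $X$, i.e. that $u\in X$ while $v\notin X$.

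The heart of the proof, and the step I expect to be the main obstacle, is establishing the required asymmetry, namely that $u\in X$; this is exactly where the bridge hypothesis is used. First I would record the easy direction: since $G$ is strongly connected there is a directed path from $v$ to $u$ in $G$, and any \emph{simple} such path must avoid $b$ (traversing $b=(u,v)$ would revisit its head $v$, which is impossible on a simple path starting at $v$); hence $v$ already reaches $u$ in $\deletearc{G}{b}$. Now suppose, for contradiction, that $u$ could also reach $v$ in $\deletearc{G}{b}$. Then $u$ and $v$ would lie in a common strongly connected component of $\deletearc{G}{b}$, and I claim this forces $\deletearc{G}{b}$ to be strongly connected: any walk of $G$ between two vertices can be rerouted to avoid $b$ by replacing each use of the step $u\to v$ with a $u$-to-$v$ path lying in $\deletearc{G}{b}$, so every vertex reaches every other in $\deletearc{G}{b}$. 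This contradicts $b$ being a bridge. Therefore $u$ cannot reach $v$ in $\deletearc{G}{b}$, giving $u\in X$; and $v\notin X$ since $v$ reaches itself. Thus $X$ is nonempty, $b$ runs from $X$ to $V\backslash X$, and by the closure property it is the unique such arc, which proves the first assertion.

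For the \emph{moreover} part I would invoke the Eulerian condition through a standard cut balance. Summing the identity $\outdegree{G}{w}=\indegree{G}{w}$ over $w\in X$ and cancelling the arcs internal to $X$, which contribute equally to both sides, shows that the number of arcs from $X$ to $V\backslash X$ equals the number of arcs from $V\backslash X$ to $X$. Since the former count is exactly one (the arc $b$), there is exactly one arc $b'$ from $V\backslash X$ into $X$, which is the claimed arc. I view the reachability asymmetry of the second paragraph as the only delicate point; both the closure property and the final count are routine once the Eulerian identity $\sum_{w\in X}\outdegree{G}{w}=\sum_{w\in X}\indegree{G}{w}$ is written out, and the same argument handles parallel arcs automatically, since a second arc $u\to v$ would itself let $u$ reach $v$ in $\deletearc{G}{b}$ and so be excluded by the bridge hypothesis.
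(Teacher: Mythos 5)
Your proof is correct and follows essentially the same route as the paper's: a reachability-defined cut $X$, a path-rerouting argument showing that $\tail{b}$ cannot reach $\head{b}$ in $\deletearc{G}{b}$ (otherwise every path using $b$ could be rerouted, contradicting the bridge hypothesis), and the Eulerian in/out-degree balance across the cut to produce the unique reverse arc $b'$. The only cosmetic difference is that the paper takes $X$ to be the set of vertices reachable from $\tail{b}$ in $\deletearc{G}{b}$ while you take the complement of the set of vertices that can reach $\head{b}$; both choices work, and the paper itself remarks that $X$ is not unique.
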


\begin{proof}
Let $X$ be the set of all vertices $v$ of $G$ such that there is a path in $\deletearc{G}{b}$ from $\tail{b}$ to $v$. First, we show that $\head{b}\not \in X$. For a contradiction we assume that $\head{b}\in X$. This implies that there is a path $P$ in $\deletearc{G}{b}$ from $\tail{b}$ to $\head{b}$. Since $G$ is strongly connected and $\deletearc{G}{e}$ is not strongly connected, there exists two vertices $v_1,v_2$ of $G$ such that every path in $G$ from $v_1$ to $v_2$ must contain $b$. Let $Q$ be a path in $G$ from $v_1$ to $v_2$. We can assume that $b$ occurs exactly once in $Q$. Let $Q_1$ be a subpath of $Q$ from $v_1$ to $\tail{b}$, and $Q_2$ be a subpath of $Q_2$ from $\head{b}$ to $v_2$. Then, $(Q_1,P,Q_2)$ is a path in $G$ from $v_1$ to $v_2$ that does not contain $b$, a contradiction.

Let $B$ denote $\set{e\in A: \tail{e}\in X, \head{e}\not \in X}$. The above claim implies that $b \in B$. If there is an arc $e$ of $G$ such that $e\neq b$ and $e\in B$. It follows from the definition of $X$ that $\head{e}\in X$, a contradiction. Therefore $b$ is the unique element in $B$. The first statement follows.

Since $G$ in an Eulerian digraph, for every subset $X$ of $V$ there are as many arcs from $X$ to $V\backslash X$ as from $V\backslash X$ to $X$. The second claim follows.
\end{proof}

\begin{figure}
\centering
\includegraphics[bb=0 0 346 260,width=2.81in,height=1.77in]{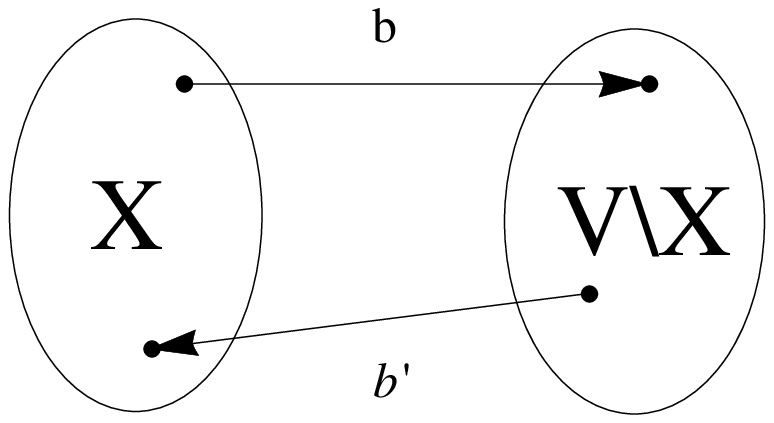}
\caption{Bridge}
\label{fig:im21}
\end{figure}

See Figure \ref{fig:im21} for the illustration of Lemma \ref{bridge:separation}. Note that the set $X$ satisfying the condition of the lemma may not be unique. The following convinces that this definition of bridge is a natural extension of the known notion on undirected graphs.

\begin{prop}
If $G$ is an undirected graph (seen as a directed graph), then an arc $e$ is a bridge of $G$ iff there is a {\reverse} arc $e'$ of $e$ in $G$ and $\deletearc{G}{\set{e,e'}}$ is not connected.
\end{prop}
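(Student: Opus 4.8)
The plan is to establish the biconditional by proving the two implications separately, leaning on Lemma~\ref{bridge:separation} for the forward direction and on a direct cut argument for the converse. Two preliminary remarks will streamline both parts. First, a loop is never a bridge: removing a loop changes no reachability relation, so $\deletearc{G}{e}$ stays strongly connected. Hence if $e$ is a bridge it is automatically a non-loop arc, and since $G$ is undirected (defined as a digraph), $e$ possesses a reverse arc $e'$; in the converse direction such an $e'$ is furnished by hypothesis. Second, for a symmetric digraph (such as $G$ and any $\deletearc{G}{\set{e,e'}}$, which remains symmetric since we delete a full reverse pair) strong connectivity coincides with connectivity of the underlying undirected graph, so the two notions of ``(not) connected'' in the statement are unambiguous.

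For the forward direction, suppose $e$ is a bridge. I would invoke Lemma~\ref{bridge:separation} to obtain a set $X\subseteq V$ for which $e$ is the \emph{unique} arc with tail in $X$ and head outside $X$, together with the \emph{unique} arc $b'$ having tail outside $X$ and head in $X$. The key step is to identify $e'$ with $b'$: because $G$ is undirected, the reverse arc $e'$ satisfies $\tail{e'}=\head{e}\notin X$ and $\head{e'}=\tail{e}\in X$, so $e'$ is an arc from $V\setminus X$ to $X$; by the uniqueness of $b'$ this forces $e'=b'$. Consequently $\set{e,e'}$ is exactly the set of all arcs joining $X$ and $V\setminus X$ in either direction. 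Deleting both leaves no arc across the cut $(X,V\setminus X)$, and since both $X$ and $V\setminus X$ are non-empty (they contain $\tail{e}$ and $\head{e}$ respectively), $\deletearc{G}{\set{e,e'}}$ is disconnected.

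For the converse, suppose $e'$ is a reverse arc of $e$ and $\deletearc{G}{\set{e,e'}}$ is not connected. Deleting the reverse pair $\set{e,e'}$ amounts to removing a single undirected edge, which disconnects the underlying graph only if that edge is a bridge, in which case the two resulting components separate its endpoints; thus I take $X$ to be the component containing $\tail{e}$ and record that $\head{e}\notin X$. The only arcs of $G$ crossing the cut $(X,V\setminus X)$ are $e$ and $e'$. In $\deletearc{G}{e}$ the arc $e'$ survives, but it runs from $\head{e}\in V\setminus X$ into $X$; no arc leaves $X$. Hence every walk in $\deletearc{G}{e}$ starting at $\tail{e}\in X$ stays inside $X$ and can never reach $\head{e}$, so $\deletearc{G}{e}$ is not strongly connected and $e$ is a bridge.

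The main obstacle is the identification $e'=b'$ in the forward direction: this is precisely the point at which undirectedness (the symmetry $\numarcs{G}{v}{v'}=\numarcs{G}{v'}{v}$) is combined with the uniqueness clause of Lemma~\ref{bridge:separation}, and it is what makes the directed notion of bridge collapse onto the classical undirected one. The same uniqueness also makes the argument robust to parallel edges in the multigraph setting, since it guarantees a single crossing arc in each direction rather than merely the existence of a separating set. The remaining verifications --- that loops are excluded, that ``not connected'' may be read as ``not strongly connected'', and that a disconnecting undirected edge separates its endpoints --- are routine and can be dispatched with the remarks above.
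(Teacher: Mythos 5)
Your proof is correct and follows essentially the same route as the paper: the forward direction combines the uniqueness clauses of Lemma~\ref{bridge:separation} with the symmetry of $G$ to identify the reverse arc $e'$ with the unique returning arc $b'$, so that $\set{e,e'}$ is the whole cut, and the converse uses the component of $\tail{e}$ in $\deletearc{G}{\set{e,e'}}$ to show no arc leaves it in $\deletearc{G}{e}$. Your preliminary remarks (loops are never bridges; for symmetric digraphs strong connectivity equals connectivity) are routine additions the paper leaves implicit.
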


\begin{proof}
$\Rightarrow$ Let $X$ be a subset of $V$ that satisfies the condition in Lemma \ref{bridge:separation}. Since $G$ is undirected, there is a {\reverse} arc $e'$ of $e$ in $G$. Clearly, $e'\in \set{e\in E: \tail{e}\not \in X,\head{e}\in X}$. Lemma \ref{bridge:separation} implies that $\set{e'}=\set{e\in E: \tail{e}\not \in X,\head{e}\in X}$. It follows that $\deletearc{G}{\set{e,e'}}$ contains no arc from $X$ to $V\backslash X$ and vice versa. Therefore $\deletearc{G}{\set{e,e'}}$ is not connected.

$\Leftarrow$ Since $G$ is connected and $\deletearc{G}{\set{e,e'}}$ is not connected, $\tail{e}$ and $\head{e}$ are in different connected components of $\deletearc{G}{\set{e,e'}}$. Let $X$ and $Y$ be two connected components of $\deletearc{G}{\set{e,e'}}$ such that $\tail{e}\in X$ and $\head{e}\in Y$. Let $v\in X$ and $v'\in Y$. Since there is no arc in $\deletearc{G}{e}$ from a vertex in $X$ to a vertex in $Y$, there is no path in $\deletearc{G}{e}$ from $v$ to $v'$. This implies that $\deletearc{G}{e}$ is not strongly connected. Therefore $e$ is a bridge.
\end{proof}

The second relation, extending the recursive formula on undirected graphs $T_G(1,y)=T_{\contract{G}{e}}(1,y)$ if $e$ is a bridge, is split into the two following propositions, depending on whether the bridge has a reverse arc.

\begin{prop}
\label{non-reverse bridge recursion}
Let $e$ be a bridge of $G$ such that it does not have a {\reverse} arc. Then $\Tut{G}{y}=\Tut{\contract{G}{e}}{y}$.
\end{prop}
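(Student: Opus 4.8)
The plan is to reduce to a loopless graph, fix the most convenient sink, and then exhibit an explicit level-preserving bijection between the recurrent configurations of $G$ and those of $\contract{G}{e}$. First, since $e$ is not a loop and is the unique arc leaving $X$ while having no {\reverse} arc, contracting $e$ creates no new loop, so $\numloops{\contract{G}{e}}=\numloops{G}$; by Proposition \ref{loop recursion} every loop contributes the same factor $y$ to both sides, hence I may assume $G$ is loopless. Next, by the sink-independence of $\Tut{\cdot}{y}$ (Theorem \ref{CFG:sink-independence}) I am free to choose the sink, and I take $s=\tail{e}$. Writing $u=\tail{e}$ and $v=\head{e}$, Lemma \ref{bridge:separation} gives a set $X$ with $u\in X$, $v\notin X$, such that $e$ is the unique arc leaving $X$; the absence of a {\reverse} arc means there is no arc from $v$ to $u$, so in $\contract{G}{e}$ the merged vertex $v''$ satisfies $\outdegree{\contract{G}{e}}{v''}=\outdegree{G}{u}-1+\outdegree{G}{v}$, since no arc becomes a loop. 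This is the bookkeeping identity I will need at the end.

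The crux is a forced-value lemma: every recurrent configuration $c$ of $G$ with sink $u$ satisfies $c(v)=\outdegree{G}{v}-1$. To prove it I would run the Burning algorithm (Lemma \ref{CFG:Dah}) from $c+\beta$, where $\beta(w)=\numarcs{G}{u}{w}$. In $\deleteoutarcs{G}{u}$ there is no arc from $X$ to $V\setminus X$, so a vertex of $V\setminus X$ can only receive chips from other vertices of $V\setminus X$ together with the single chip that $\beta$ places on $v$ through $e$. Consequently, among the vertices of $V\setminus X$ the first one to fire must be $v$, which forces $c(v)+1\geq \outdegree{G}{v}$; combined with the stability $c(v)\leq \outdegree{G}{v}-1$ of a recurrent configuration this yields equality. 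I expect this step to be the main obstacle: one must realize that it is $\head{e}$, and not $\tail{e}$, whose value gets pinned down, and that the sink making the accounting work is precisely $\tail{e}$ — the naive guess of forgetting the value at the contracted endpoint for an arbitrary sink fails.

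With the forced value in hand I define $\theta(c)=c_{|V\setminus\set{u,v}}$ and claim it is a bijection from the recurrent configurations of $G$ (sink $u$) onto those of $\contract{G}{e}$ (sink $v''$). The argument transfers firing sequences between the two games, exactly in the style of Lemmas \ref{recurrent:contraction} and \ref{generatingfunction:deletion}: firing $v''$ (adding its $\beta$) in $\contract{G}{e}$ corresponds to firing the sink $u$ and then $v$ in $G$ — which is legitimate since $v$ is firable as soon as the sink has fired, by Lemma \ref{CFG:convergence} — while every subsequent firing of a vertex of $V\setminus\set{u,v}$ acts identically on the shared coordinates, because chips sent to $u$ or $v$ in $G$ are exactly the chips sent to the sink $v''$ in $\contract{G}{e}$. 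For the forward direction this turns the Burning sequence of $c$ into one witnessing recurrence of $\theta(c)$, and injectivity is immediate because the dropped coordinate $c(v)$ is forced. For surjectivity I lift a Burning sequence of any recurrent configuration of $\contract{G}{e}$, setting $c(v)=\outdegree{G}{v}-1$; the only point to check is that $v$ returns to this value, and it does, since the vertices of $V\setminus X$ firing once each send $v$ exactly $\indegree{G}{v}-1=\outdegree{G}{v}-1$ chips (one in-arc of $v$, namely $e$, comes from $X$, and the remaining $\indegree{G}{v}-1$ from $V\setminus X$).

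Finally I verify that $\theta$ preserves the $\textbf{\texttt{sum}}$, hence the level. Using $c(v)=\outdegree{G}{v}-1$ together with $\outdegree{\contract{G}{e}}{v''}=\outdegree{G}{u}-1+\outdegree{G}{v}$, a direct computation gives $\confdegree{G}{u}{c}=\confdegree{\contract{G}{e}}{v''}{\theta(c)}$ for every $c$. Since $\theta$ is a bijection carrying the multiset of $\textbf{\texttt{sum}}$-values of $G$ onto that of $\contract{G}{e}$, the two minima coincide, $\totalchipmin{G}=\totalchipmin{\contract{G}{e}}$, and therefore $\level{G}{c}=\level{\contract{G}{e}}{\theta(c)}$ term by term. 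Summing $y$ to the power of the level over both sides then yields $\Tut{G}{y}=\Tut{\contract{G}{e}}{y}$.
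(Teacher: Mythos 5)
Your proposal is correct and follows essentially the same route as the paper: fix the sink at $\tail{e}$, use the Burning algorithm together with Lemma \ref{bridge:separation} to force $c(\head{e})=\outdegree{G}{\head{e}}-1$ for every recurrent configuration, and then read off a $\textbf{\texttt{sum}}$-preserving bijection with the recurrent configurations of $\contract{G}{e}$ by forgetting that coordinate. The only differences are presentational — you strip loops up front via Proposition \ref{loop recursion} instead of carrying $\numloops{\cdot}$ through the computation of $\totalchipmin{\cdot}$, and you reprove the relevant special case of Lemma \ref{recurrent:contraction} by transferring firing sequences rather than citing it.
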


\begin{proof}
We construct a bijection from the set of recurrent configurations of $\contract{G}{e}$ to the set of recurrent configurations of $G$ that preserves the level. We prove two intermediate claims, and the result follows.

Let $s$ denote $\tail{e}$ and $t$ denote $\head{e}$. Let $\mathcal{C}$ be the set of all recurrent configurations of $G$ with respect to the sink $s$. We claim that for any $c \in \mathcal{C}$ we have $c(t)=\outdegree{G}{t}-1$. For a contradiction we assume that $c(t)<\outdegree{G}{t}-1$. Let $X$ be a subset of $V$ that satisfies the condition of Lemma \ref{bridge:separation}. Let $\beta:V\backslash\set{s}\to \mathbb{N}$ be given by $\beta(v)=\numarcs{G}{s}{v}$ for any $v\in V\backslash\set{s}$. The choice of $X$ straightforwardly implies that $\beta(t)=1$, and $\beta(v)=0$  for any $v\in V\backslash (X\cup \set{t})$. Let $\mathfrak{f}=(v_1,v_2,\cdots,v_k)$ be a firing sequence of $c+\beta$ such that $v_i\neq s$ for any $i$, $c+\beta\overset{\mathfrak{f}}{\to} c$, and each vertex $v$ of $G$ distinct from $s$ occurs exactly once in the sequence. Since $c(t)<\outdegree{G}{t}-1$, there is no firable vertex of $c+\beta$ in $V\backslash X$. This implies that $v_1\in X$. Let $j$ be the smallest index such that $v_j\in X$ and $v_{j+1}\not\in X$, and $c'$ be the configuration reach after the $j$ first vertices have been fired, that is, such that $c\overset{v_1,v_2,\cdots,v_j}{\longrightarrow}c'$. Since $v_{j+1}$ is not firable in $c+\beta$ and firable in $c'$, there is at least one vertex $v_p\in \set{v_1,v_2,\cdots,v_j}$ that gives chips to $v_{j+1}$ when it is fired. It follows that there is at least one arc $e'$ of $G$ such that $\tail{e'}=v_p$ and $\head{e'}=v_{j+1}$. Clearly, $e'\neq e$ and $e'\in \set{e\in A:\tail{e}\in X,\head{e}\not \in X}$, a contradiction to Lemma \ref{bridge:separation}.

Let $H$ denote $\contract{G}{e}$, let $s'$ denote the vertex of $H$ resulting from replacing $s$ and $t$ in $\contract{G}{e}$, and let $\mathcal{C}'$ denote the set of all recurrent configurations of $H$ with the sink $s'$. We claim that $\totalchipmin{G}=\totalchipmin{H}$. By Lemma \ref{loop reduction} we have $\totalchipmin{G}=\min\set{\confdegree{G}{s}{c}-\numloops{G}:c\in \mathcal{C}}$ and $\totalchipmin{H}=\min\set{\confdegree{H}{s'}{c}-\numloops{H}:c\in \mathcal{C}'}$, where $\numloops{G}$ and $\numloops{H}$ are the numbers of loops of $G$ and $H$, respectively. It follows from the above claim and Lemma \ref{recurrent:contraction} that the map $\mu:\mathcal{C}'\to \mathcal{C}$, defined by $\mu(c)(v)=c(v)$ if $v\neq t$, and $\mu(c)(t)=\outdegree{G}{t}-1$, is a bijection. Therefore $\min\set{\underset{v\neq s}{\sum}c(v):c\in \mathcal{C}}-\min\set{\underset{v\neq s}{\sum} c(v):c\in \mathcal{C}'}=\outdegree{G}{t}-1$. Note that $\outdegree{H}{s'}=\outdegree{G}{s}+\outdegree{G}{t}-1$. Finally, since $e$ does not have a {\reverse} arc, we have $\numloops{G}=\numloops{H}$, and the claim follows the fact that $\confdegree{G}{s}{c}=\outdegree{G}{s}+\sum \limits_{v\neq s} c(v)$.

We can conclude the proof: for any $c\in \mathcal{C}'$ we have $\level{G}{\mu(c)}=\outdegree{G}{s}+\underset{v\neq s}{\sum} \mu(c)(v)-\totalchipmin{G}=\outdegree{G}{s}+\underset{v\neq s'}{\sum} c(v)+\outdegree{G}{t}-1-\totalchipmin{H}=\outdegree{H}{s'}+\underset{v\neq s'}{\sum} c(v)-\totalchipmin{H}=\level{H}{c}$. This implies $\Tut{G}{y}=\Tut{H}{y}$.
\end{proof}

\begin{prop}
\label{reverse-bridge recursion}
Let $e$ be a bridge of $G$ such that it has a {\reverse} arc $e'$, and let $H$ denote $\contract{G}{e}$.\\
Then $\Tut{G}{y}=\frac{1}{y}\Tut{H}{y}$ and $\Tut{G}{y}=\Tut{\deletearc{H}{e'}}{y}$.
\end{prop}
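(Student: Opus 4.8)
The plan is to reduce the second equality to the first by means of the loop-deletion formula (Proposition \ref{loop recursion}), and to establish the first equality, $\Tut{G}{y}=\frac{1}{y}\Tut{\contract{G}{e}}{y}$, through a bijection between recurrent configurations that shifts every level by exactly one. First I would record the geometry of the bridge. Applying Lemma \ref{bridge:separation} to $b=e$ produces a set $X$ with $\set{e}=\set{a:\tail{a}\in X,\head{a}\notin X}$ together with a unique arc crossing the cut in the reverse direction, $\set{b'}=\set{a:\tail{a}\notin X,\head{a}\in X}$. Since $e'$ is the {\reverse} of $e$ we have $\tail{e'}\notin X$ and $\head{e'}\in X$, so $e'=b'$ and, moreover, $\numarcs{G}{\tail{e}}{\head{e}}=1$. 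Contracting $e$ turns $e'$ into a loop of $H:=\contract{G}{e}$, while every loop of $G$ survives and no further arc becomes a loop; hence $\numloops{H}=\numloops{G}+1$.

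Next, setting $s:=\tail{e}$ as sink and $t:=\head{e}$, I would transport the structural fact proved inside Proposition \ref{non-reverse bridge recursion}: its argument relies only on the cut structure of Lemma \ref{bridge:separation}, not on the absence of a {\reverse} arc, and therefore still gives $c(t)=\outdegree{G}{t}-1$ for every $c\in\mathcal C$. With this, Lemma \ref{recurrent:contraction} applied to $W=\set{t}$ produces a bijection between $\mathcal C$ and the recurrent configurations of $\vertexcontract{G}{\set{s,t}}$ with respect to the merged sink $s'$; and because $e$ is a loop at that sink, Proposition \ref{loop recursion} shows this latter set coincides (as maps $V\backslash\set{s,t}\to\mathbb N$) with the set $\mathcal C_H$ of recurrent configurations of $H$. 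Composing yields the bijection $\phi:\mathcal C\to\mathcal C_H$, $c\mapsto c_{|V\backslash\set{s,t}}$.

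The core of the argument is the level bookkeeping. A direct count gives $\outdegree{H}{s'}=\outdegree{G}{s}+\outdegree{G}{t}-1$, and substituting $c(t)=\outdegree{G}{t}-1$ yields $\confdegree{G}{s}{c}=\confdegree{H}{s'}{\phi(c)}$ for every $c\in\mathcal C$. To promote this to a level identity I must control the normalization $\totalchipmin{G}$, which is defined through the loopless graph $\deleteloops{G}$. Using Lemma \ref{loop reduction} I would first prove the clean identity $\min_{c\in\mathcal C}\confdegree{G}{s}{c}=\totalchipmin{G}+\numloops{G}$, and its analogue for $H$; combined with $\confdegree{G}{s}{c}=\confdegree{H}{s'}{\phi(c)}$ and $\numloops{H}=\numloops{G}+1$, this forces $\totalchipmin{H}=\totalchipmin{G}-1$. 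Consequently $\level{H}{\phi(c)}=\level{G}{c}+1$, whence $\Tut{H}{y}=y\,\Tut{G}{y}$, that is $\Tut{G}{y}=\frac{1}{y}\Tut{H}{y}$. Finally, since $e'$ is a loop of $H$, Proposition \ref{loop recursion} gives $\Tut{H}{y}=y\,\Tut{\deletearc{H}{e'}}{y}$, and hence $\Tut{G}{y}=\Tut{\deletearc{H}{e'}}{y}$, the second claim.

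I expect the main obstacle to be precisely this loop accounting. Because the normalization $\totalchipmin{G}$ is defined via the loopless graph $\deleteloops{G}$ while the contraction of a {\reverse} bridge creates the new loop $e'$, the naive guess that the bijection preserves levels is false, and one must track the $+1$ shift faithfully through Lemma \ref{loop reduction}. A secondary point needing care is the justification that the recurrent configurations of $\vertexcontract{G}{\set{s,t}}$ and of $H$ form literally the same set of maps, which rests on $e$ becoming a loop at the sink and thus being invisible to the dynamics.
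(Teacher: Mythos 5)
Your proposal is correct and follows essentially the same route as the paper, which itself only sketches this proof by deferring to ``similar arguments'' from Proposition \ref{non-reverse bridge recursion} and listing the four facts ($\numloops{H}=\numloops{G}+1$, the bijection $\mu$ fixing $c(t)=\outdegree{G}{t}-1$, $\totalchipmin{H}=\totalchipmin{G}-1$, and the level shift by one) before invoking Proposition \ref{loop recursion} for the second equality. You in fact supply details the paper leaves implicit --- that the argument for $c(t)=\outdegree{G}{t}-1$ uses only the cut structure, the identification of recurrent configurations of $\vertexcontract{G}{\set{s,t}}$ with those of $\contract{G}{e}$, and the $\totalchipmin{G}+\numloops{G}$ accounting --- all of which check out.
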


As shown on Figure \ref{fig:im21}, deleting $e'$ in $H$ corresponds to erasing the loop created by the contraction of $e$.

\begin{proof}
It follows from Lemma \ref{bridge:separation} that $e'$ is the unique {\reverse} arc of $e$. Let $s'$ be the new vertex in $\contract{G}{e}$ resulting from replacing the two endpoints of $e$. Let $\mathcal{C}$ and $\mathcal{C}'$ be the sets of all recurrent configurations of $G$ and $H$ with respect to the sinks $s$ and $s'$, respectively. The following can be proved by similar arguments as used in the proof of Proposition \ref{non-reverse bridge recursion} with the notice that $e'$ is a loop in $\contract{G}{e}$.
\begin{itemize}
  \item $\numloops{H}=\numloops{G}+1$.
  \item the map $\mu:\mathcal{C}'\to \mathcal{C}$, defined by $\mu(c)(v)=c(v)$ if $v\neq t$, and $\mu(c)(t)=\outdegree{G}{t}-1$, is a bijection, where $t$ denotes $\head{e}$.
  \item $\totalchipmin{H}=\totalchipmin{G}-1$.
  \item for any $c\in \mathcal{C}'$ $\level{H}{c}=\level{G}{\mu(c)}+1$.
\end{itemize} 
The assertions above imply that $\Tut{G}{y}=\frac{1}{y}\Tut{H}{y}$. Since $e'$ is a loop in $H$, it follows from Proposition \ref{loop recursion} that  $\Tut{G}{y}=\frac{1}{y}\Tut{H}{y}=\frac{1}{y}\, y\,\Tut{\deletearc{H}{e'}}{y}=\Tut{\deletearc{H}{e'}}{y}$.
\end{proof}

Third, the recursive formula $T_G(1,y)=T_{\deletearc{G}{e}}(1,y)+T_{\contract{G}{e}}(1,y)$ if $e$ is neither a loop nor a bridge has the following generalization.

\begin{prop}
\label{neither loop nor bridge recursion}
Let $e$ be an arc of $G$ such that $e$ is neither a loop nor a bridge, and $e$ has a {\reverse} arc $e'$. Then $\Tut{G}{y}=y^{1+\totalchipmin{\deletearc{G}{\set{e,e'}}}-\totalchipmin{G}}\,\Tut{\deletearc{G}{\set{e,e'}}}{y}+y^{\totalchipmin{H}-\totalchipmin{G}}\,\Tut{H}{y}$, where $H$ denotes $\contract{G}{e}$. Moreover, if $G$ is undirected then $\Tut{G}{y}=\Tut{\deletearc{G}{\set{e,e'}}}{y}+y^{-\numarcs{G}{\tail{e}}{\head{e}}+1}\,\Tut{\deletearc{H}{e'}}{y}$.
\end{prop}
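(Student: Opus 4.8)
My plan is to evaluate $\Tut{G}{y}$ with the particular sink $s:=\tail{e}$, which is legitimate since Theorem \ref{CFG:sink-independence} guarantees that $\Tut{G}{y}$ is sink-independent. Write $w:=\head{e}$. Every recurrent configuration $c$ of $G$ with respect to $s$ is stable, so $0\leq c(w)\leq \outdegree{G}{w}-1$, and I would partition the set $\mathcal C$ of recurrent configurations into
$$\mathcal C_{<}=\set{c\in\mathcal C:c(w)<\outdegree{G}{w}-1}\quad\text{and}\quad \mathcal C_{=}=\set{c\in\mathcal C:c(w)=\outdegree{G}{w}-1}.$$
This gives $\Tut{G}{y}=\sum_{c\in\mathcal C_{<}}y^{\level{G}{c}}+\sum_{c\in\mathcal C_{=}}y^{\level{G}{c}}$, and the whole argument consists in recognising the first sum as the deletion term and the second as the contraction term.

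For the deletion term I set $G':=\deletearc{G}{\set{e,e'}}$. Since $e$ is not a bridge, $\deletearc{G}{e}$ is strongly connected, and a simple-path argument shows $G'$ is connected (in $\deletearc{G}{e}$ a simple path from $s$ to $w$ cannot traverse $e'=(w,s)$, and any simple path ending at $s$ can use $e'$ only as its final arc, so connectivity to $\set{s,w}$ survives). Then Lemma \ref{generatingfunction:deletion} applies and identifies $\mathcal C_{<}$, as a set of maps $V\backslash\set{s}\to\mathbb N$, with the set of recurrent configurations of $G'$ with respect to $s$. Because $\outdegree{G}{s}=\outdegree{G'}{s}+1$, each $c\in\mathcal C_{<}$ satisfies $\level{G}{c}=\level{G'}{c}+1+\totalchipmin{G'}-\totalchipmin{G}$, so the first sum equals $y^{\,1+\totalchipmin{G'}-\totalchipmin{G}}\,\Tut{G'}{y}$, which is the deletion term.

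For the contraction term I would invoke Lemma \ref{recurrent:contraction} with $W=\set{w}$ (legitimate as $w\neq s$ is an out-neighbour of $s$). Since $\outdegree{G}{w}-1\geq \outdegree{G}{w}-\numarcs{G}{s}{w}$, restricting that lemma to the single slice $c(w)=\outdegree{G}{w}-1$ turns its many-to-one correspondence into a bijection $\phi:c\mapsto c_{|V\backslash\set{s,w}}$ from $\mathcal C_{=}$ onto the recurrent configurations of $\vertexcontract{G}{\set{s,w}}$ with respect to the merged sink $s'$. The crux is that $\vertexcontract{G}{\set{s,w}}$ and $H:=\contract{G}{e}$ differ only by the loop at $s'$ created from $e$; since the sink never fires, such a loop changes neither the firability of other vertices nor the Burning algorithm of Lemma \ref{CFG:Dah}, so both graphs have the same recurrent configurations and $\phi$ maps $\mathcal C_{=}$ bijectively onto the recurrent configurations of $H$. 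Using $\outdegree{H}{s'}=\outdegree{G}{s}+\outdegree{G}{w}-1$ together with $c(w)=\outdegree{G}{w}-1$, a routine computation gives $\level{G}{c}=\level{H}{\phi(c)}+\totalchipmin{H}-\totalchipmin{G}$, so the second sum equals $y^{\,\totalchipmin{H}-\totalchipmin{G}}\,\Tut{H}{y}$; adding the two sums yields the general formula. I expect this reconciliation of the vertex contraction of Lemma \ref{recurrent:contraction} with the arc contraction $\contract{G}{e}$ of the statement, together with the passage from the many-to-one map to the clean bijection on the slice $c(w)=\outdegree{G}{w}-1$, to be the main obstacle.

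Finally, for the undirected case I would specialise the two exponents using $\totalchipmin{G}=\frac{|A(\deleteloops{G})|}{2}$. Deleting the reverse pair $e,e'$ removes exactly one undirected edge, so $\totalchipmin{G'}=\totalchipmin{G}-1$ and the first exponent $1+\totalchipmin{G'}-\totalchipmin{G}$ vanishes, leaving $\Tut{\deletearc{G}{\set{e,e'}}}{y}$. For $H$ the $2\,\numarcs{G}{s}{w}$ arcs joining $s$ and $w$ all disappear from $\deleteloops{H}$, hence $\totalchipmin{H}=\totalchipmin{G}-\numarcs{G}{s}{w}$; and since $e'$ is a loop of $H$, Proposition \ref{loop recursion} gives $\Tut{H}{y}=y\,\Tut{\deletearc{H}{e'}}{y}$. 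Combining, $y^{\,\totalchipmin{H}-\totalchipmin{G}}\,\Tut{H}{y}=y^{-\numarcs{G}{\tail{e}}{\head{e}}+1}\,\Tut{\deletearc{H}{e'}}{y}$, which is exactly the second term of the undirected formula.
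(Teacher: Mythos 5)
Your proof is correct and follows essentially the same route as the paper: the same partition of $\mathcal C$ according to whether $c(\head{e})$ equals or falls below $\outdegree{G}{\head{e}}-1$, with Lemma \ref{generatingfunction:deletion} handling the deletion term, Lemma \ref{recurrent:contraction} handling the contraction term, the same level bookkeeping, and the same specialization via $\totalchipmin{G}=\frac{|A(\deleteloops{G})|}{2}$ and Proposition \ref{loop recursion} in the undirected case. The only difference is that you explicitly justify two points the paper passes over silently --- the connectivity of $\deletearc{G}{\set{e,e'}}$ and the reconciliation of $\vertexcontract{G}{\set{s,w}}$ with $\contract{G}{e}$ via the inertness of a loop at the sink --- both of which you handle correctly.
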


In this formula, we reduce $\Tut{G}{y}$ to the sum of the polynomial for $G$ on which both $e$ and its reverse arc $e'$ are removed (corresponding to the bridge deletion of the undirected case, see Proposition \ref{reverse-bridge recursion}) and the polynomial for $G$ on which $e$ is contracted. The terms $y^\alpha$ are used for re-normalizing according to the definition of level.

\begin{proof}
We first give names to useful elements, and then prove both statements of the result one after the other. Let $s$ and $t$ denote $\tail{e}$ and $\head{e}$, respectively. Since $e$ is neither a loop nor a bridge, $\deletearc{G}{\set{e,e'}}$ is connected. Let $\mathcal{C}_1$ be the set of all recurrent configurations $c$ of $G$ with sink $s$ such that $c(t)=\outdegree{G}{t}-1$, and let $\mathcal{C}_2$ be the set of all recurrent configurations $c$ of $G$ with sink $s$ such that $c(t)<\outdegree{G}{t}-1$. We have $\mathcal C=\mathcal C_1 \cup C_2$, and we will see that each element of this partition corresponds to one of the two terms of the sum. Let $s'$ denote the vertex of $H$ resulting from replacing $s$ and $t$ in $G$. Let $\mathcal{D}$ be the set of all recurrent configurations of $H$ with sink $s'$. 

\textbf{First statement.} We begin with $\underset{c\in \mathcal{C}_1}{\sum} z^{\level{G}{c}}$, corresponding to the second term of the sum. It follows from Lemma \ref{recurrent:contraction} that the map $\mu:\mathcal{D}\to \mathcal{C}_1$, defined by $\mu(c)(v)=c(v)$ if $v\neq t$, and $\mu(c)(t)=\outdegree{G}{t}-1$, is bijective. For any $c\in \mathcal{C}_1$ we have $\level{G}{c}=\outdegree{G}{s}+\underset{v\neq s}{\sum}c(v)-\totalchipmin{G}=\outdegree{G}{s}+\outdegree{G}{t}-1+\underset{v\not \in \set{s,t}}{\sum} c(v)-\totalchipmin{H}+\totalchipmin{H}-\totalchipmin{G}=\outdegree{H}{s'}+\underset{v\not \in \set{s,t}}{\sum}c(v)-\totalchipmin{H}+\totalchipmin{H}-\totalchipmin{G}=\level{H}{\mu^{-1}(c)}+\totalchipmin{H}-\totalchipmin{G}$. This implies that $\underset{c\in \mathcal{C}_1}{\sum} z^{\level{G}{c}}=y^{\totalchipmin{H}-\totalchipmin{G}}\,\Tut{H}{y}$, which is the second term of the sum.

Regarding $\underset{c\in \mathcal{C}_2}{\sum} z^{\level{G}{c}}$, it follows from Lemma \ref{generatingfunction:deletion} that $\mathcal{C}_2$ is the set of all recurrent configurations of $\deletearc{G}{\set{e,e'}}$ with sink $s$. For any $c\in \mathcal{C}_2$ we have $\level{G}{c}=\outdegree{G}{s}+\underset{v\neq s}{\sum}c(v)-\totalchipmin{G}=1+\outdegree{\deletearc{G}{\set{e,e'}}}{s}+\underset{v\neq s}{\sum}c(v)-\totalchipmin{\deletearc{G}{\set{e,e'}}}+\totalchipmin{\deletearc{G}{\set{e,e'}}}-\totalchipmin{G}=\level{\deletearc{G}{\set{e,e'}}}{c}+1+\totalchipmin{\deletearc{G}{\set{e,e'}}}-\totalchipmin{G}$. This implies that $\underset{c\in \mathcal{C}_2}{\sum} y^{\level{G}{c}}=y^{1+\totalchipmin{\deletearc{G}{\set{e,e'}}}-\totalchipmin{G}}\,\Tut{\deletearc{G}{\set{e,e'}}}{y}$. Since $\Tut{G}{y}=\underset{c\in \mathcal{C}_1}{\sum}y^{\level{G}{c}}+\underset{c\in \mathcal{C}_2}{\sum}y^{\level{G}{c}}$, the first statement follows.

\textbf{Second statement.} $G$ is an undirected graph, so are $\deletearc{G}{\set{e,e'}}$ and $H$. Thus $1+\totalchipmin{\deletearc{G}{\set{e,e'}}}-\totalchipmin{G}=1+\frac{|A(\deleteloops{\deletearc{G}{\set{e,e'}}})|}{2}-\frac{|A(\deleteloops{G})|}{2}=0$. Since $e'$ is a loop in $H$, we have $y^{\totalchipmin{H}-\totalchipmin{G}}\,\Tut{H}{y}= y^{1+\totalchipmin{H}-\totalchipmin{G}}\,\Tut{\deletearc{H}{e'}}{y}$. The second statement is completed by showing that $\totalchipmin{H}-\totalchipmin{G}=-\numarcs{G}{s}{t}$. We have $\totalchipmin{H}-\totalchipmin{G}=\frac{|A(H)|-\numloops{H}}{2}-\frac{|A(G)|-\numloops{G}}{2}=\frac{|A(H)|-|A(G)|}{2}-\frac{\numloops{H}-\numloops{G}}{2}=-\frac{1}{2}-\frac{(2\numarcs{G}{s}{t}-1)}{2}=-\numarcs{G}{s}{t}$.
\end{proof}

Let us present a new formula that does not exist for the Tutte polynomial on undirected graphs. If $G$ is undirected, then it contains at least one arc that is a loop, or it satisfies the conditions of Proposition \ref{non-reverse bridge recursion}, Proposition \ref{reverse-bridge recursion} or Proposition \ref{neither loop nor bridge recursion}. In every case $\Tut{G}{y}$ can be defined by a recursive formula on smaller graphs. However, the digraph given in Figure \ref{fig:im22} is an example of Eulerian digraph that does not contain any such arc, therefore no recursive formula generalizing those of the classical Tutte polynomial can be applied. Neither of the recursive formulas in Proposition \ref{loop recursion}, Proposition \ref{non-reverse bridge recursion}, Proposition \ref{reverse-bridge recursion} and Propostion \ref{neither loop nor bridge recursion} is useful in this case. The following new recursive formula handles this case, in order to complete the recursive definitions of $\Tut{G}{y}$ on the class of general Eulerian digraphs. Note that its intuitive shape comes from the Mobius inversion formula that is stated as follows.

\begin{figure}
\centering
\includegraphics[bb=0 0 247 227,width=1.71in,height=1.57in,keepaspectratio]{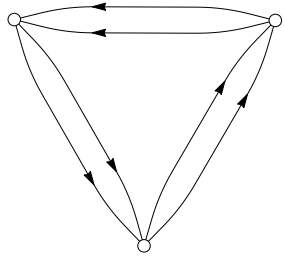}
\caption{An Eulerian digraph that does not satisfy any usual condition}
\label{fig:im22}
\end{figure}

\emph{Mobius inversion formula.} Let $X$ be a non-empty finite set and $f:2^{X}\to \mathbb{Z}$. We define $g:2^X\to \mathbb{Z}$ by $g(A):=\underset{A\subseteq Y}{\sum} f(Y)$. Then for every $A\in 2^X$ we have
$$
f(A)=\underset{A\subseteq Y}{\sum}(-1)^{|Y|-|A|} g(Y)
$$
\begin{prop}
\label{general recursive formula}
Let $G$ be an Eulerian digraph, $s$ be a vertex of $G$, and $N$ be the set of all out-neighbors of $s$. Then
$$\Tut{G}{y}=\sum_{\substack{W\subseteq N\\ W\neq \emptyset}} (-1)^{|W|+1}y^{\totalchipmin{\vertexcontract{G}{W\cup \set{s}}}-\totalchipmin{G}-\numarcs{G}{s}{W}} \frac{1}{(1-y)^{|W|}}\underset{v\in W}{\prod}\left(1-y^{\numarcs{G}{s}{v}}\right)\Tut{\vertexcontract{G}{W\cup \set{s}}}{y}$$
where $\numarcs{G}{s}{W}$ denotes the number of arcs $e$ of $G$ such that $\tail{e}=s$ and $\head{e}\in W$.
\end{prop}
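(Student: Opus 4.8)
The plan is to sort the recurrent configurations of $\mathcal{C}$ according to which out-neighbours of $s$ become firable after a single firing of the sink, to read off a closed form for each ``sorted'' generating function from Lemma~\ref{recurrent:contraction}, and to recombine these pieces through the Mobius inversion formula quoted above. For $W\subseteq N$ I set
$$\Sigma_W:=\sum_{\substack{c\in\mathcal{C}\\ c(v)\geq\,\outdegree{G}{v}-\numarcs{G}{s}{v}\ (v\in W)}}y^{\level{G}{c}},$$
so that $\Sigma_{\emptyset}=\Tut{G}{y}$ is exactly the quantity to be computed. First I would give a closed form for $\Sigma_W$ with $W\neq\emptyset$, and then invert.

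Fix $W\neq\emptyset$, write $H_W=\vertexcontract{G}{W\cup\set{s}}$ and let $s'$ be its contracted vertex. By the two directions of Lemma~\ref{recurrent:contraction}, restriction to $V\backslash(W\cup\set{s})$ maps the configurations counted by $\Sigma_W$ onto the recurrent configurations $c'$ of $H_W$ with sink $s'$; the fibre over $c'$ is, by the converse part of that lemma, the set of lifts in which each $c(v)$, $v\in W$, runs independently over $\{\outdegree{G}{v}-\numarcs{G}{s}{v},\dots,\outdegree{G}{v}-1\}$. I would therefore sum $y^{\level{G}{c}}$ over a single fibre first. Expressing $\level{G}{c}$ through $\level{H_W}{c'}$ and $\sum_{v\in W}c(v)$, each $v\in W$ contributes a geometric sum $\sum_{j=\outdegree{G}{v}-\numarcs{G}{s}{v}}^{\outdegree{G}{v}-1}y^{j}=y^{\outdegree{G}{v}-\numarcs{G}{s}{v}}\,\frac{1-y^{\numarcs{G}{s}{v}}}{1-y}$, producing the prefactor $\frac{1}{(1-y)^{|W|}}\prod_{v\in W}(1-y^{\numarcs{G}{s}{v}})$; summing the remaining $y^{\level{H_W}{c'}}$ over $c'$ gives $\Tut{H_W}{y}$.

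The delicate point, and the step I expect to be the main obstacle, is the collection of the leftover powers of $y$ into one exponent. Adding up the contributions gives $\outdegree{G}{s}-\outdegree{H_W}{s'}+\totalchipmin{H_W}-\totalchipmin{G}+\sum_{v\in W}\big(\outdegree{G}{v}-\numarcs{G}{s}{v}\big)$, which must reduce to $\totalchipmin{H_W}-\totalchipmin{G}-\numarcs{G}{s}{W}$. This reduction is precisely the identity $\outdegree{H_W}{s'}=\sum_{v\in W\cup\set{s}}\outdegree{G}{v}$, valid because every arc of $G$ with tail in $W\cup\set{s}$ becomes an arc (possibly a loop) leaving $s'$. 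Here one must take care to read $\level{H_W}{\cdot}$ with the \emph{full} out-degree of $s'$, loops created by the contraction included, so that the loop conventions in $\level{H_W}{\cdot}$ and $\totalchipmin{H_W}$ are reconciled. This identifies $(-1)^{|W|+1}\Sigma_W$ with the $W$-th summand of the statement.

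It remains to prove $\Sigma_{\emptyset}=\sum_{\emptyset\neq W\subseteq N}(-1)^{|W|+1}\Sigma_W$. Writing $S(c)=\set{v\in N:\,c(v)\geq\outdegree{G}{v}-\numarcs{G}{s}{v}}$ and $f(W)=\sum_{c\in\mathcal{C},\,S(c)=W}y^{\level{G}{c}}$, one has $\Sigma_W=\sum_{W\subseteq Y\subseteq N}f(Y)$, so the Mobius inversion formula yields $f(\emptyset)=\sum_{Y\subseteq N}(-1)^{|Y|}\Sigma_Y$; isolating the $Y=\emptyset$ term rearranges this into the desired identity provided $f(\emptyset)=0$. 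This last vanishing is where the dynamics enters, and it is short: if $c$ were recurrent with $S(c)=\emptyset$ then, with $\beta(v)=\numarcs{G}{s}{v}$, no vertex would be firable in $c+\beta$, whence $(c+\beta)^{\circ}=c+\beta\neq c$, contradicting the Burning algorithm characterisation of recurrence (Lemma~\ref{CFG:Dah}). Hence $f(\emptyset)=0$ and the proposition follows.
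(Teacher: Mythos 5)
Your proposal is correct and follows essentially the same route as the paper: the quantities $\Sigma_W$ and $f(W)$ are exactly the paper's $P_W(y)$ and $Q_W(y)$, the closed form for $\Sigma_W$ is obtained from Lemma \ref{recurrent:contraction} via the same fibre decomposition and geometric sums (including the key identity $\outdegree{H_W}{s'}=\sum_{v\in W\cup\set{s}}\outdegree{G}{v}$), and the Mobius inversion hinges on the same vanishing $f(\emptyset)=0$, which you justify, as the paper does, by the Burning algorithm.
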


\noindent Note that the number of vertices of the digraph $\vertexcontract{G}{W\cup \set{s}}$ is strictly smaller than $G$. Moreover the digraph $\vertexcontract{G}{W\cup \set{s}}$ is likely to have more loops than $G$, hence we could apply Proposition \ref{loop recursion} to remove the loops in $\vertexcontract{G}{W\cup \set{s}}$.
\begin{proof}
Let $\mathcal{C}$ be the set of all recurrent configurations of $G$ with sink $s$. For each $c\in \mathcal{C}$, let $\support{c}$ be the set of out-neighbors of $s$ that, from the configuration $c$, become firable when $s$ is fired, formally $\support{c}:=\set{v\in N:c(v)\geq \outdegree{G}{v}-\numarcs{G}{s}{v}}$. We define
$$P_W(y)=\sum_{\substack{c\in \mathcal C\\W \subseteq \support{c}}} y^{\level{G}{c}}$$
so that $\Tut{G}{y}=P_\emptyset(y)$. We will give thereafter a closed formula for $P_W(y)$, which is not interesting if $W=\emptyset$. In order to overcome this issue, let us express $P_\emptyset(y)$ in terms of $P_W(y)$ for $W\neq \emptyset$, using the Mobius inversion formula.

We define $Q_W(y)=\sum \limits_{\substack{c \in \mathcal C\\ W=\support{c}}} y^{\level{G}{c}}$ so that $P_W(y)=\sum \limits_{W \subseteq S \subseteq N} Q_S(y)$. Moreover, from the Burning algorithm (Lemma \ref{CFG:Dah}) it follows that $\set{c\in \mathcal{C}:\support{c}=\emptyset}=\emptyset$, therefore $Q_{\emptyset}(y)=0$. Applying the Mobius inversion formula for the Boolean lattice $2^{N}$ we have $0=Q_{\emptyset}(y)=\underset{W\subseteq N}{\sum} (-1)^{|W|} P_W(y)$, which allows to express $P_\emptyset(y)$ in terms of the other components of the sum,
$$\Tut{G}{y}=P_{\emptyset}(y)=\sum_{\substack{W\subseteq N\\ W \neq \emptyset}} (-1)^{|W|+1} P_{W}(y).$$

For the second part of the proof, we claim that $$P_W(y)=y^{\totalchipmin{H}-\totalchipmin{G}-\numarcs{G}{s}{W}}\frac{1}{(1-y)^{|W|}}\underset{v\in W}{\prod}\left(1-y^{\numarcs{G}{s}{v}}\right)\Tut{H}{y}$$ where $H$ denotes $\vertexcontract{G}{W\cup \set{s}}$.

The vertices in $W$ are denoted by $w_1,w_2,\cdots,w_p$ for some $p$, and let $\mathcal{C}'$ be the set of all recurrent configurations of $H$ with sink $s'$, where $s'$ is the new vertex in $H$ resulting from replacing the vertices in $W\cup \set{s}$. It follows from Lemma \ref{recurrent:contraction} and the definition of level that
$$
P_W(y)
=\underset{\substack{c\in \mathcal{C}\\W \subseteq \support{c}}}{\sum}y^{\level{G}{c}}
=\underset{c\in \mathcal{C'}}{\sum}\sum_{\substack{d\in \mathcal{C}\\d_{|V\backslash(W\cup \set{s})}=c}} y^{\level{G}{d}}
=\underset{c\in \mathcal{C}'}{\sum}\left( y^{-\totalchipmin{G}+\outdegree{G}{s}} ~\Big(\!\!\!\! \sum_{\substack{d\in \mathcal{C}\\d_{|V\backslash(W\cup \set{s})}=c}}\!\!\! y^{ \underset{v \in W}{\sum} d(v)} \Big)~ y^{ \underset{v\not \in (W\cup \set{s})}{\sum} \!\!\!\!c(v)}\right)
$$
For each $i\in [1..p]$, let $I_i:=\set{\outdegree{G}{s}-\numarcs{G}{s}{w_i},\outdegree{G}{s}-\numarcs{G}{s}{w_i}+1,\cdots,\outdegree{G}{s}-1}$. It follows from Lemma \ref{recurrent:contraction} that the map $\mu:I_1\times I_2\times\cdots\times I_p\times \mathcal{C}'\to \mathcal{C}$, defined by $\mu(i_1,i_2,\cdots,i_p,c)(v)$ is equal to $c(v)$ if $v\not \in W$, and equal to $i_j$ if $v=w_j$, is bijective, which means that, for a configuration $c \in \mathcal C'$, the configurations on the graph $G$ constructed from $c$ by putting any number of chips in $I_i$ to $w_i$ produces the whole set $\mathcal C$. As a consequence,
\begin{align*}
P_W(y)
&=\underset{c\in \mathcal{C}'}{\sum}\left( y^{-\totalchipmin{G}+\outdegree{G}{s}} ~\Big( \underset{1\leq i \leq p}{\prod} \underset{j\in I_i}{\sum}y^{j} \Big)~ y^{ \underset{v\not \in (W\cup \set{s})}{\sum} \!\!\!\!c(v)}\right)\\
&=y^{-\totalchipmin{G}+\outdegree{G}{s}} ~\Big( \underset{1\leq i \leq p}{\prod} \underset{j\in I_i}{\sum}y^{j} \Big)~ \underset{c\in \mathcal{C}'}{\sum} y^{ \underset{v\not \in (W\cup \set{s})}{\sum} \!\!\!\!c(v)}\\
&= y^{-\totalchipmin{G}+\outdegree{G}{s}} \underset{w\in W}{\prod} y^{\outdegree{G}{w}-\numarcs{G}{s}{w}}\underset{w\in W}{\prod} \frac{\left(1-y^{\numarcs{G}{s}{w}}\right)}{1-y} \underset{c\in \mathcal{C}'}{\sum}y^{\underset{v\not \in (W\cup \set{s})}{\sum} \!\!\!\!c(v)}\\
&=y^{-\totalchipmin{G}-\numarcs{G}{s}{W}}y^{\underset{v\in W\cup \set{s}}{\sum} \outdegree{G}{v}}\frac{1}{(1-y)^{|W|}}\underset{w\in W}{\prod} \left(1-y^{\numarcs{G}{s}{w}}\right)\underset{c\in \mathcal{C}'}{\sum} y^{\underset{v\not \in W\cup \set{s}}{\sum} \!\!\!\!c(v)}\\
&=y^{-\totalchipmin{G}-\numarcs{G}{s}{W}} y^{\outdegree{H}{s'}}\frac{1}{(1-y)^{|W|}}\underset{w\in W}{\prod}\left(1-y^{\numarcs{G}{s}{w}}\right)\underset{c\in \mathcal{C}'}{\sum} y^{\underset{v\neq W\cup \set{s}}{\sum} \!\!\!\!c(v)}\\
&=y^{\totalchipmin{H}-\totalchipmin{G}-\numarcs{G}{s}{W}}\frac{1}{(1-y)^{|W|}}\underset{w\in W}{\prod}\left(1-y^{\numarcs{G}{s}{w}}\right)\underset{c\in \mathcal{C}'}{\sum} y^{\outdegree{H}{s'}+\underset{v\not \in W\cup \set{s}}{\sum} \!\!\!\!c(v)-\totalchipmin{H}}\\
&=y^{\totalchipmin{H}-\totalchipmin{G}-\numarcs{G}{s}{W}}\frac{1}{(1-y)^{|W|}} \underset{w\in W}{\prod}\left(1-y^{\numarcs{G}{s}{w}}\right) \underset{c\in \mathcal{C}'}{\sum} y^{\level{H}{c}}\\
&=y^{\totalchipmin{H}-\totalchipmin{G}-\numarcs{G}{s}{W}} \frac{1}{(1-y)^{|W|}}\underset{w\in W}{\prod}\left(1-y^{\numarcs{G}{s}{w}}\right)\Tut{H}{y}
\end{align*}
which proves our claim and concludes the proof.
\end{proof}

\section{Some open problems}
\label{open section}

In this paper we defined a natural analogue of the Tutte polynomial in one variable, for the class of general Eulerian digraphs. From a sink-independence property of the generating function of the set of recurrent configurations of the Chip-firing game, it turns out that this polynomial $\Tut{G}{y}$ is characteristic of the support graph itself, regardless of the chosen sink. Most interestingly, this polynomial is equal to the well-known Tutte polynomial $T_G(1,y)$ on undirected graphs. We presented evaluations of $\Tut{G}{y}$ generalizing the evaluations of $T_G(1,y)$, and we hope that new objects counted by evaluations of $\Tut{G}{y}$ will be discovered. Finally, we showed recursive formulas for this polynomial, which again account for natural generalization of those of the Tutte polynomial on undirected graphs. We end up with a new recursive formula for $\Tut{G}{y}$ in order to get a complete set of recursive formulas defining this polynomial.

It is now natural to ask whether there exists such a natural generalization of $T_G(1,y)$ to the class of connected digraphs. We believe there is such a generalization to the class of strongly connected digraphs by the following surprising conjecture.


Let $G=(V,E)$ be a strongly connected digraph and $s$ be a vertex of $G$. We denote by $\deleteoutarcs{G}{s}$ the digraph constructed from $G$ by removing all out-going arcs of $s$. Clearly, $\deleteoutarcs{G}{s}$ has a global sink $s$. Fix a linear order $v_1\prec v_2\prec\cdots \prec v_{n-1}$ on the set of all vertices of $G$ distinct from $s$, where $n=|V|$. Let $r_1,r_2,\cdots,r_{n-1}\in \mathbb{Z}^{n-1}$ be given by $r_{i,j}=\numarcs{G}{v_i}{v_j}$ if $i\neq j$, and $r_{i,i}=\outdegree{G}{v_i}$, and let $\beta=(\beta_1,\beta_2,\cdots,\beta_{n-1})\in \mathbb{Z}^{n-1}$ be given by $\beta_i=\numarcs{G}{s}{v_i}$. We define a equivalence relation $\sim$ on the set $\mathcal{C}$ of all recurrent configurations of $\deleteoutarcs{G}{s}$ by $c_1\sim c_2$ iff $c_1-c_2\in <r_1,r_2,\cdots,r_{n-1},\beta>$, where $<r_1,r_2,\cdots,r_{n-1},\beta>$ is the subgroup of $(\mathbb{Z}^{n-1},+)$ generated by $r_1,r_2,\cdots,r_{n-1},\beta$. Note that if $G$ is Eulerian then $\beta\in <r_1,r_2,\cdots,r_{n-1}>$, therefore $<r_1,r_2,\cdots,r_{n-1},\beta>=<r_1,r_2,\cdots,r_{n-1}>$. For each $B\in \mathcal{C}/\!\!\sim$ let $\confdegree{G}{s}{B}$ denote $\max\set{\outdegree{G}{s}+\underset{v\neq s}{\sum}c(v):c\in B}$.

\begin{conj}
The sequence $\left(\confdegree{G}{s}{B}\right)_{B \,\in\, \mathcal{C}/\sim}$ is independent of the choice of $s$, up to a permutation on the entries.
\end{conj}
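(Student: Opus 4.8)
The plan is to imitate, at the level of the coarse equivalence classes, the level-preserving bijection that proves Theorem~\ref{CFG:sink-independence}. Fix two sinks $s_1,s_2$ and let $\mathcal{C}_1,\mathcal{C}_2$ be the recurrent configurations of $\deleteoutarcs{G}{s_1}$ and $\deleteoutarcs{G}{s_2}$. Since each recurrent configuration is the unique recurrent representative of its Sandpile class (Lemma~\ref{CFG:recurrent:property:algebraic}), the relation $\sim$ identifies two recurrent configurations exactly when their Sandpile classes differ by a multiple of $[\beta]$; hence $\mathcal{C}_i/\!\sim$ is the set of cosets of $\langle[\beta]\rangle$ in the Sandpile group with sink $s_i$, each class containing $\mathrm{ord}([\beta])$ recurrent configurations, cyclically generated by firing the sink.

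First I would check that the two sequences have the same length. Let $L$ be the $n\times n$ Laplacian of $G$ and $\Lambda=\mathrm{rowspan}_{\mathbb Z}(L)$. Dropping the $s$-coordinate identifies $\langle r_1,\dots,r_{n-1},\beta\rangle$ with the projection of $\Lambda$, so that $|\mathcal{C}/\!\sim|=|\mathbb Z^{n}/(\Lambda+\mathbb Z e_s)|$. Because every row of $L$ sums to zero, the map $x\mapsto\sum_v x_v$ kills $\Lambda$ and sends $e_s$ to $1$ for \emph{every} $s$; a short computation then gives $\mathbb Z^{n}/(\Lambda+\mathbb Z e_s)\cong T$, where $T$ is the torsion subgroup of $\mathbb Z^{n}/\Lambda$. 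As $T$ does not mention $s$, the number of classes is sink-independent, which matches the necessary counting condition and generalizes the equality $|\mathcal C_1|=|\mathcal C_2|$ used in the Eulerian case.

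Next I would construct the intended bijection $\Theta:\mathcal{C}_1/\!\sim\;\to\;\mathcal{C}_2/\!\sim$. The natural candidate represents a class $B$ by its maximum-total-chips recurrent configuration $c$, then applies the change-of-sink map of Section~\ref{sink-independence section}: put $\outdegree{G}{s_1}$ plus a swap number $\augnum{s_2}{c}$ of chips on $s_1$, stabilize with respect to $s_2$, and return the $\sim$-class of the result. The goal is to show that $\Theta$ is well defined (independent of the chosen representative), that it preserves $\confdegree{G}{s}{\cdot}$, and that it is injective; bijectivity then follows from the count above. To this end one would re-run the arguments behind Lemma~\ref{bijection:iexistence} (existence of a sum-conserving swap number), Proposition~\ref{compatibility with the configuration order} (monotonicity), and especially Proposition~\ref{swapping swap number} (the symmetry $\augnum{s_1}{\theta(c)}=\augnum{s_2}{c}$), now applied to the maximal representatives of the coarse classes rather than to individual recurrent configurations.

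The hard part is that all of this machinery rests on Lemma~\ref{CFG:maximum of total number of chips}, which asserts that a recurrent configuration carries the maximum total number of chips among the stable configurations of its class. For a general digraph this is precisely the open Question stated just after that lemma, and its Eulerian proof is decisively Eulerian: it bounds the chips falling into the sink using that each in-neighbour of $s$ fires at most once together with $\indegree{G}{v}=\outdegree{G}{v}$. Without this anchor, the existence, monotonicity and swap-symmetry of swap numbers all lose their foundation. I therefore expect the genuine difficulty to be either (i) proving the correct substitute for Lemma~\ref{CFG:maximum of total number of chips} adapted to the coarse $\sim$-classes of a strongly connected digraph --- the version comparing total chips \emph{within a $\sim$-class}, which may survive even when the Sandpile-class statement fails --- or (ii) devising a sink-change bijection on classes that never invokes a max-total-chips property at all. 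Once a workable maximality statement is secured, the remaining steps should follow the Eulerian template closely and conclude the proof.
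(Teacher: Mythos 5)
This statement is not proved in the paper at all: it is stated as a \emph{Conjecture} in Section \ref{open section}, so there is no proof to compare yours against, and what you have written is a research programme rather than a proof. You are candid about this yourself, but it must be said plainly: the proposal does not establish the result. The only step you actually carry out is the counting argument showing that $|\mathcal{C}/\!\sim|$ is sink-independent (via the torsion subgroup of $\mathbb{Z}^n/\Lambda$), which is correct and is the right necessary condition, but it is far from the conjecture, whose content is the sink-independence of the multiset of \emph{maximal} chip sums over the coarse classes, not merely of the number of classes.

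The concrete gap is exactly the one you flag and then leave open. Every load-bearing step of the Eulerian argument --- Lemma \ref{bijection:iexistence} (existence of a sum-conserving swap number), Proposition \ref{compatibility with the configuration order}, and Proposition \ref{swapping swap number} --- is propped up by Lemma \ref{CFG:maximum of total number of chips}, whose proof uses the Eulerian condition $\indegree{G}{v}=\outdegree{G}{v}$ in an essential way (to show each vertex fires at most once from $d+\beta$, hence that no more chips reach the sink than were added). The paper itself records, as an open Question immediately after that lemma, whether the statement survives on general digraphs; your items (i) and (ii) are restatements of that open problem, not resolutions of it. In addition, several steps you describe as ``re-running the arguments'' are not routine at the coarse level: well-definedness of $\Theta$ on $\sim$-classes, the claim that the image of the max-sum representative of $B$ realizes the max over $\Theta(B)$, and the analogue of Lemma \ref{inversibility} (whose proof also invokes Eulerianity to place $d^{*^{s_1}}$ and $c^{*^{s_1}}$ in the same equivalence class) all require new arguments that are not supplied. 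As it stands the proposal identifies the right obstacle but does not overcome it, so the conjecture remains open.
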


\noindent If the conjecture holds, we have a generalization of $T_G(1,y)$ to the class of strongly connected digraphs.

It may be more reasonable to ask whether there is a generalization of the Tutte polynomial in two variables to the class of Eulerian digraphs. The bijection presented in \cite{CB03} gives a promising direction for this problem, that is, to look for its generalization to the class of Eulerian digraphs. In addition, one has to generalize the concepts of internal and external activities to the class of Eulerian digraphs. This task is hard, but the generalization of bridge presented in this paper may give insights to address the question.

This question could be addressed by looking for an alternative recursive formula for the Tutte polynomial in two variables on undirected graphs so that it works on Eulerian digraphs, possibly for general digraphs. The new recursive formula in Proposition \ref{general recursive formula} could suggest such a formula since it uses only the vertex contraction in its recursive terms, and the notion of vertex contraction has a natural generalization to general digraphs. Moreover, the following conjecture convinces that such a generalization exists.
\begin{conj}
Let $G$ be a connected undirected graph, $s$ a vertex of $G$, and $N$ the set of all neighbors of $s$ (not including $s$). Then $T_G(x,y)$ is in the ideal generated by $\set{T_{\overline{\vertexcontract{G}{W\cup \set{s}}}}(x,y):\emptyset\subsetneq W\subseteq N}$ in $\mathbb{Q}[x,y]$, where $\overline{H}$ denotes $H$ in which all loops have been removed.
\end{conj}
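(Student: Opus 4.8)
The plan is to transport the Möbius-inversion mechanism behind Proposition \ref{general recursive formula} from the one-variable generating function to the full two-variable Tutte polynomial, replacing recurrent configurations by Whitney's rank--nullity (subset) expansion
\[
T_G(x,y)=\sum_{A\subseteq E}(x-1)^{\,r(E)-r(A)}(y-1)^{\,|A|-r(A)},
\]
where $r(A)=|V|-c(A)$, $c(A)$ is the number of connected components of $(V,A)$, and $r(E)=|V|-1$ since $G$ is connected. For $A\subseteq E$ let $\tau(A)\subseteq N$ be the set of neighbors of $s$ lying in the connected component of $s$ in $(V,A)$, and define the partial sums $P_W(x,y)=\sum_{A:\,W\subseteq\tau(A)}(x-1)^{r(E)-r(A)}(y-1)^{|A|-r(A)}$ and $Q_W(x,y)=\sum_{A:\,\tau(A)=W}(\cdots)$, so that $P_W=\sum_{W\subseteq Y\subseteq N}Q_Y$ and $T_G=P_\emptyset$. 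Applying the Möbius inversion formula recalled above to the Boolean lattice $2^N$ gives $Q_\emptyset=\sum_{W\subseteq N}(-1)^{|W|}P_W$, hence $T_G=Q_\emptyset+\sum_{\emptyset\neq W\subseteq N}(-1)^{|W|+1}P_W$.

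First I would establish the two-variable analogue of the closed form of $P_W$ obtained in Proposition \ref{general recursive formula}. For $W\neq\emptyset$, restricting to those $A$ that force $W\cup\set{s}$ into a single component is, up to the edges internal to $W\cup\set{s}$ (the edges that become loops), the same as working in $\vertexcontract{G}{W\cup\set{s}}$; carefully tracking the $(x-1)$- and $(y-1)$-powers carried by those internal edges should yield $P_W(x,y)=g_W(x,y)\,T_{\deleteloops{\vertexcontract{G}{W\cup\set{s}}}}(x,y)$ with $g_W\in\mathbb{Q}[x,y]$ an explicit product of geometric sums $\tfrac{y^{\numarcs{G}{s}{v}}-1}{y-1}$ times a power of $(x-1)$, mirroring the factor in Proposition \ref{general recursive formula}. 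Granting this, each $P_W$ with $W\neq\emptyset$ lies in the ideal generated by the $T_{\deleteloops{\vertexcontract{G}{W\cup\set{s}}}}(x,y)$, and therefore so does the whole sum $\sum_{\emptyset\neq W}(-1)^{|W|+1}P_W$.

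The main obstacle is the residual term $W=\emptyset$. In the chip-firing setting of Proposition \ref{general recursive formula} the Burning algorithm forces $Q_\emptyset=0$, but here $Q_\emptyset$ does not vanish: the subsets $A$ with $\tau(A)=\emptyset$ are exactly those containing no edge incident to $s$, and a direct comparison of exponents gives $Q_\emptyset(x,y)=(x-1)^{\,c(G-s)}\,T_{G-s}(x,y)$, where $G-s$ is $G$ with $s$ and its incident edges deleted and $c(G-s)$ is its number of components. Since $c(G-s)\geq 1$, this term vanishes at $x=1$, which is precisely why Proposition \ref{general recursive formula} carries no residual summand; but to prove genuine membership in $\mathbb{Q}[x,y]$ one must still show that $(x-1)^{\,c(G-s)}T_{G-s}(x,y)$ belongs to the ideal. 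I would attack this by induction on $|V|$: apply the statement recursively inside $G-s$ to expand $T_{G-s}$ along a suitable vertex, and match the resulting vertex-contractions of $G-s$ with the available generators, using the identity $\deleteloops{\vertexcontract{G}{N\cup\set{s}}}=\deleteloops{\vertexcontract{(G-s)}{N}}$ (the $W=N$ generator already lives inside $G-s$). The delicate points are absorbing the extra factor $(x-1)^{\,c(G-s)}$ and coping with the possible disconnectedness of $G-s$, where $T_{G-s}$ factors as a product over its components and the induction hypothesis must be invoked componentwise; I expect this leftover to be the crux of the whole proof.

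As an alternative that stays closer to the chip-firing viewpoint, one could instead use a bijection between recurrent configurations and spanning trees in the spirit of \cite{CB03}: write $T_G(x,y)=\sum_T x^{\mathrm{ia}(T)}y^{\mathrm{ea}(T)}$ over spanning trees, define a support $\support{T}\subseteq N$ from the tree edges incident to $s$ so that $\support{T}\neq\emptyset$ holds automatically, and run the same Möbius inversion, now with no residual term at all. The price is that the internal activity $\mathrm{ia}$ must be shown to transform by a clean polynomial factor under the vertex-contraction $\vertexcontract{G}{W\cup\set{s}}$; generalizing internal and external activity compatibly with vertex contraction is exactly the hard point flagged in the open problems above. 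For that reason I would first pursue the rank--nullity route, where the only genuine gap is the explicit leftover term $(x-1)^{\,c(G-s)}T_{G-s}(x,y)$.
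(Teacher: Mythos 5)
First, a point of order: the paper does not prove this statement --- it is stated as a conjecture, supported only by one example checked with a Gr\"obner basis --- so there is no proof of record to compare yours against, and your attempt must stand on its own. As written it does not: it is a plan with two acknowledged gaps, and the first of them is not merely unfinished but false as stated. Your M\"obius set-up is fine, and the identity $Q_\emptyset(x,y)=(x-1)^{c(G-s)}T_{G-s}(x,y)$ is correct (for $A$ with $\tau(A)=\emptyset$ the vertex $s$ is isolated in $(V,A)$, so $r_G(A)=r_{G-s}(A)$ while the rank of $G$ exceeds that of $G-s$ by the number $c(G-s)$ of components of $G-s$). But the claimed closed form $P_W(x,y)=g_W(x,y)\,T_{\deleteloops{\vertexcontract{G}{W\cup\set{s}}}}(x,y)$ already fails for the triangle on $\set{s,a,b}$ with $W=\set{a}$: the Whitney sum over the five subsets $A$ joining $a$ to $s$ gives $P_{\set{a}}=(x-1)+3+(y-1)=x+y+1$, whereas $T_{\deleteloops{\vertexcontract{G}{\set{s,a}}}}=x+y$ (two parallel edges), and $x+y+1$ is not a polynomial multiple of $x+y$ (restrict to the line $x+y=0$). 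The structural reason is that the Whitney expansion of the contracted graph runs over \emph{all} edge subsets of $\vertexcontract{G}{W\cup\set{s}}$, including those that do not join $W\cup\set{s}$ into a single component of $(V,A)$, and on those subsets the rank functions of $G$ and of the contraction are not related by the constant shift $|W|$ that your bookkeeping requires. This mismatch is invisible in Proposition \ref{general recursive formula} only because its $P_W$ is defined through recurrent configurations and Lemma \ref{recurrent:contraction} gives an exact product decomposition of $\set{c\in\mathcal{C} : W\subseteq\support{c}}$; your Whitney-based $P_W$ is a genuinely different quantity even at $x=1$ (for the triangle it equals $y+2$ there, versus $y+1$ for the paper's).

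The second gap, membership of $Q_\emptyset=(x-1)^{c(G-s)}T_{G-s}$ in the ideal, you correctly flag as the crux, but the inductive scheme you sketch replaces the original problem by one of essentially the same difficulty: an ideal-membership statement for $G-s$, whose associated generators are not among the generators for $G$ except in the single case $W=N$, together with the unabsorbed factor $(x-1)^{c(G-s)}$ and the bookkeeping for a possibly disconnected $G-s$. So neither of the two reductions that would place $T_G(x,y)$ in the ideal is established; the statement remains, as in the paper, open. If you want to salvage the strategy, the decomposition itself must be changed so that each piece genuinely factors through a contraction --- for instance by working with a spanning-tree expansion and an activity notion compatible with vertex contraction, which is exactly the difficulty the paper's final section identifies.
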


Equivalently, the conjecture means that there exist polynomials $P_{W}(x,y)\in \mathbb{Q}[x,y],\emptyset\subsetneq W \subseteq N$ such that $T_G(x,y)=\underset{\emptyset \subsetneq W\subseteq N}{\sum} P_{W}(x,y) T_{\overline{G_{/W\cup \set{s}}}}(x,y)$.  Let us give an example for the illustrative explanation of this conjecture. The first graph in Figure \ref{fig:im24} shows an undirected graph $G$ with a chosen vertex $s$ (in black). The remaining graphs are the graphs which are obtained from $G$ by contracting vertex sets $\set{s}\cup W$ and then removing the resulting loops. The Tutte polynomials are shown below the corresponding graphs. By using a  Gr\"{o}bner basis we can verify that the first polynomial is in the ideal generated by the remaining polynomials.
\begin{figure}
\centering
\includegraphics[bb=0 0 759 759,width=5.36in,height=4.19in]{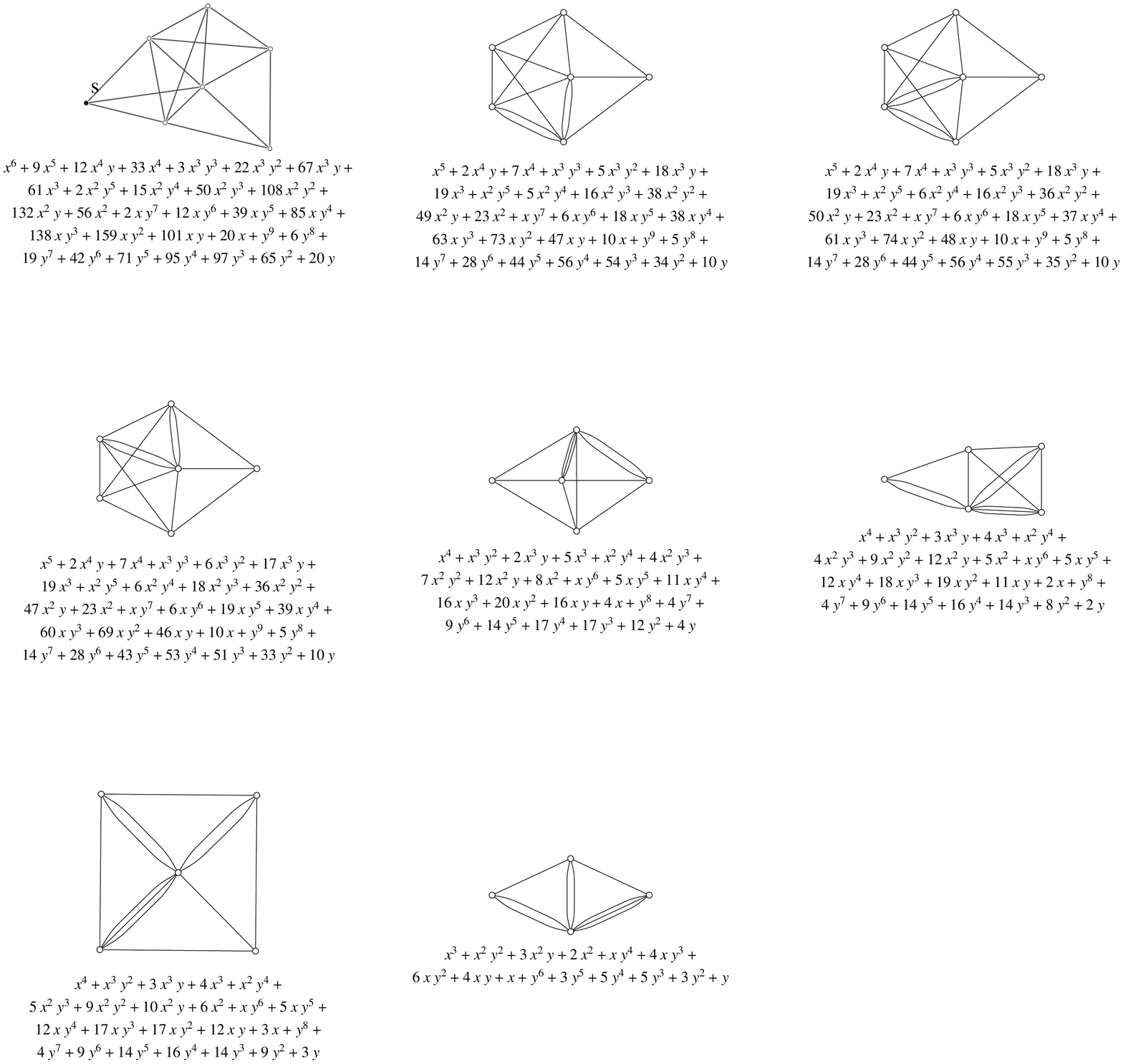}
\caption{An undirected graph and its vertex contractions}
\label{fig:im24}
\end{figure}
\pagebreak

K\'evin Perrot\\
Universit\'e de Lyon - LIP (UMR 5668 CNRS-ENS de Lyon-Universit\'e Lyon 1)\\
46 all\'ee d'Italie 69364 Lyon Cedex 7-France\\
Universit\'e de Nice Sophia Antipolis - Laboratoire I3S (UMR 6070 CNRS)\\
2000 route des Lucioles, BP 121, F-06903 Sophia Antipolis Cedex, France\\
Email: kevin.perrot@ens-lyon.fr\\

\noindent Trung Van Pham\\
Vietnamese Institute of Mathematics\\
18 Hoang Quoc Viet Road, Cau Giay District, Hanoi, Viet Nam\\
Email: pvtrung@math.ac.vn
\end{document}